\newtheorem{thmintro}{Theorem}
\newtheorem{corintro}[thmintro]{Corollary}
\newtheorem{theorem}{Theorem}[section]
\newtheorem{corollary}[theorem]{Corollary}
\newtheorem{lemma}[theorem]{Lemma}
\newtheorem{prop}[theorem]{Proposition}
\theoremstyle{definition}
\newtheorem{remarkintro}{Remark}
\newtheorem{remark}[theorem]{Remark}
\newcommand{\GG}{\mathbb{G}}
\newcommand{\CC}{\mathbf{C}}
\newcommand{\FF}{\mathbf{F}}
\newcommand{\NN}{\mathbf{N}}
\newcommand{\RR}{\mathbf{R}}
\newcommand{\ZZ}{\mathbf{Z}}
\newcommand{\BB}{\mathcal{B}}
\newcommand{\A}{\mathfrak{A}}
\newcommand{\g}{\mathfrak{g}}
\newcommand{\G}{\mathfrak{G}}
\newcommand{\hh}{\mathfrak{h}}
\newcommand{\B}{\mathfrak{B}}
\newcommand{\N}{\mathfrak{N}}
\newcommand{\T}{\mathfrak{T}}
\newcommand{\U}{\mathfrak{U}}
\newcommand{\UU}{\mathcal{U}}
\newcommand{\inv}{^{-1}}
\newcommand{\co}{\colon\thinspace}
\newcommand{\la}{\langle}
\newcommand{\ra}{\rangle}
\newcommand{\nub}{\mathrm{nub}}
\DeclareMathOperator{\ad}{ad}
\DeclareMathOperator{\Aut}{Aut}
\DeclareMathOperator{\charact}{char}
\DeclareMathOperator{\height}{ht}
\DeclareMathOperator{\re}{re}
\DeclareMathOperator{\im}{im}
\DeclareMathOperator{\Id}{Id}
\DeclareMathOperator{\sign}{sign}
\DeclareMathOperator{\Prop}{P}
\DeclareMathOperator{\con}{con}
\DeclareMathOperator{\Hom}{Hom}
\DeclareMathOperator{\modulo}{mod}
\DeclareMathOperator{\Zalg}{\mathbf{Z}-alg}
\DeclareMathOperator{\Int}{Int}
\begin{document}

\renewcommand{\proofname}{{\bf Proof}}

\title[Abstract simplicity of locally compact Kac--Moody groups]{Abstract simplicity of locally compact Kac--Moody groups}

\author[T.~Marquis]{Timoth\'ee \textsc{Marquis}$^*$}
\address{UCL, 1348 Louvain-la-Neuve, Belgium}
\email{timothee.marquis@uclouvain.be}
\thanks{$^*$F.R.S.-FNRS Research Fellow}

%
\begin{abstract}
In this paper, we establish that complete Kac--Moody groups over finite fields are abstractly simple. The proof makes an essential use of Mathieu--Rousseau's construction of complete Kac--Moody groups over fields. This construction has the advantage that both real and imaginary root spaces of the Lie algebra lift to root subgroups over arbitrary fields. A key point in our proof is the fact, of independent interest, that both real and imaginary root subgroups are contracted by conjugation of positive powers of suitable Weyl group elements.
\end{abstract}

\maketitle

\section{Introduction}
Let $A=(A_{ij})_{1\leq i,j\leq n}$ be a generalised Cartan matrix and let $\G=\G_A$ denote the associated Kac--Moody--Tits functor of simply connected type, as defined by J.~Tits (\cite{Tits87}). The value of $\G$ over a field $k$ is usually called a {\bf minimal Kac--Moody group} of type $A$ over $k$. This terminology is justified by the existence of larger groups associated with the same data, usually called {\bf maximal} or {\bf complete Kac--Moody groups}, and which are completions of $\G(k)$ with respect to some suitable topology. 
One of them, introduced in \cite{ReRo}, and which we will temporarily denote by $\hat{\G}_A(k)$, is a totally disconnected topological group. It is moreover locally compact provided $k$ is finite, and non-discrete (hence uncountable) as soon as $A$ is not of finite type.

The question whether $\hat{\G}_A(k)$ is (abstractly) simple for $A$ indecomposable and $k$ arbitrary is very natural and was explicitly addressed by J.~Tits \cite{Tits89}. Abstract simplicity results for $\hat{\G}_A(k)$ over fields of characteristic $0$ were first obtained in an unpublished note by R.~Moody (\cite{Moodyunpublished}). Moody's proof has been recently generalised by G.~Rousseau (\cite[Théorème~6.19]{Rousseau}) who extended Moody's result to fields $k$ of positive characteristic $p$ that are not algebraic over $\FF_p$. The abstract simplicity of $\hat{\G}_A(k)$ when $k$ is a finite field was shown in \cite{CER} in some important special cases, including groups of $2$-spherical type over fields of order at least $4$, as well as some other hyperbolic types under additional restrictions on the order of the ground field.

In this paper, we establish the abstract simplicity of complete Kac--Moody groups $\hat{\G}_A(k)$ of indecomposable type over arbitrary finite fields, without any restriction. Our proof relies on an approach which is completely different from the one used in \cite{CER}.

\begin{thmintro}\label{mainthmintro}
Let $\hat{\G}_A(\FF_q)$ be a complete Kac--Moody group over a finite field $\FF_q$, with generalised Cartan matrix $A$. Assume that $A$ is indecomposable of indefinite type. Then $\hat{\G}_A(\FF_q)$ is abstractly simple.
\end{thmintro}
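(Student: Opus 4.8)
The plan is to follow the now-standard strategy for proving abstract simplicity of topological groups acting on buildings, originating from Tits' simplicity criterion and adapted to the Kac--Moody setting. Let $G = \hat{\G}_A(\FF_q)$ and let $G^\dagger$ denote the subgroup generated by all real root subgroups $U_\alpha$. The first step is to reduce the theorem to a \emph{contraction} statement: by a variant of Tits' transitivity lemma applied to the (twin) building on which $G$ acts strongly transitively, together with the fact that the $G$-action on the positive building is faithful when $A$ is indecomposable of indefinite type (so the building is irreducible and not a point), any normal subgroup $N \trianglelefteq G$ that is not central must act nontrivially, hence must contain a conjugate of some root subgroup; the real challenge is then to propagate this to show $N \supseteq G^\dagger$ and finally $N = G$. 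The key tool for the propagation, and what the abstract promises, is that \emph{both real and imaginary root subgroups are contracted by conjugation by positive powers of suitable Weyl group elements} — that is, for a suitable $w$ in (a lift of) the Weyl group and any root subgroup $U$, one has $w^n U w^{-n} \to 1$ as $n \to \infty$ in the topology of $G$.

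Granting that contraction result (which I would expect to be established as one of the main technical theorems earlier in the paper, and which I am therefore entitled to assume), I would argue as follows. Let $N \trianglelefteq G$ be nontrivial; I want $N = G$. Since the diagonal subgroup normalizes each $U_\alpha$ and $G^\dagger$ together with the torus generates $G$, and since $G/G^\dagger$ is understood (for indefinite indecomposable type the relevant abelianization/quotient is small and controlled by the structure of $\hat{\G}_A$), it suffices to show $N \supseteq U_\alpha$ for a single real root $\alpha$: then conjugating by the Weyl group and by the torus gives all real root subgroups, hence $N \supseteq G^\dagger$, and a final argument handles the (abelian, central-modulo-$G^\dagger$) cokernel. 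To get $U_\alpha \subseteq N$: pick $1 \neq g \in N$. Using that $G$ is topologically generated by root subgroups and that the root subgroups \emph{including the imaginary ones} are available in the Mathieu--Rousseau model, write $g$ (approximately, then exactly using that $U^+ = \overline{\prod U_\alpha}$ as a pro-$p$ group) in terms of root group coordinates, and find a Weyl element $w$ contracting the ``positive part'' of $g$. Then $w^n g w^{-n} g^{-1} \in N$ and, as $n \to \infty$, this commutator converges to an element lying in a single root subgroup (or a small product thereof), using that the contracted factors vanish in the limit and $N$ is closed — here closedness of $N$ is automatic since a nontrivial normal subgroup of this topologically simple-by-design group is dense, or alternatively one works with $\overline{N}$ and then descends. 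Extract from this limit a nontrivial element of some $U_\beta$, and since root subgroups over $\FF_q$ are abelian $p$-groups on which the torus acts with enough characters (again using indecomposability of $A$), conclude $U_\beta \subseteq N$.

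The main obstacle, and the place where the novelty of this paper concentrates, is precisely the contraction claim and its use: in the Rémy--Ronan completion the product $U^+ = \prod_{\alpha > 0} U_\alpha$ involves infinitely many root subgroups, and a general element has infinitely many nonzero coordinates, so one cannot naively ``conjugate away'' the positive part in finitely many steps. This is exactly why the paper switches to Mathieu--Rousseau's construction: there, the completion $\mathcal{U}^{ma+}$ is a genuine pro-$p$ group in which \emph{all} root spaces of the Kac--Moody algebra — real and imaginary — exponentiate to closed root subgroups, and the topology is the one for which $w^n$-conjugation genuinely contracts each of them to $1$. Making the commutator-convergence argument rigorous requires: (i) choosing $w$ so that the set of roots \emph{not} contracted is finite (a combinatorial fact about the $\ZZ$-action of $w$ on the root lattice, valid in indefinite type because one can find $w$ whose axis is ``generic''); (ii) controlling the limit of $w^n g w^{-n} g^{-1}$ inside the pro-$p$ group, which is where the imaginary root subgroups are indispensable and where the Mathieu--Rousseau commutation relations must be invoked carefully; and (iii) the endgame passing from ``$N$ contains a nontrivial root group element'' to ``$N = G$'', which uses the Moufang-type / $BN$-pair structure and the finiteness of $\FF_q$ to avoid the characteristic-zero arguments of Moody and Rousseau. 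I would expect step (i)–(ii) to be the technical heart, with (iii) following more routinely from the building-theoretic machinery.
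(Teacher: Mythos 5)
Your identification of the technical heart --- that in the Mathieu--Rousseau completion both real and imaginary root subgroups are contracted by powers of a suitable Weyl element, and that this is why the paper works with $\G^{pma}$ rather than $\G^{rr}$ --- is accurate, and your endgame (once $N\supseteq U^+$ one concludes via openness and the $BN$-pair) matches the paper. But there is a genuine gap at the decisive step, namely the passage from ``$N$ is a nontrivial (hence, by topological simplicity, dense) normal subgroup'' to ``$N$ actually contains root group elements''. Your proposed mechanism is a limiting commutator argument: $w^n g w^{-n} g^{-1}\in N$ converges to an element of a root subgroup, and you then want that limit to lie in $N$. This requires $N$ to be closed, and your two escape routes both fail: density of $N$ does not make it closed (the entire difficulty of abstract versus topological simplicity is that $N$ may be a proper dense subgroup), and replacing $N$ by $\overline{N}$ gives $\overline{N}=G$ by topological simplicity, which says nothing about $N$ itself. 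The paper closes exactly this gap by invoking the Caprace--Reid--Willis theorem (Theorem~\ref{thm CRW}): a \emph{dense} normal subgroup of a totally disconnected locally compact group contains the \emph{closure} of $\con(g)$ for every $g\in G$. That result is the abstract input that converts the contraction statement (Theorem~\ref{thmBintro}) directly into $K\supseteq \overline{\con(a)}\cup\overline{\con(a^{-1})}\supseteq U^+$, with no limiting argument inside $K$ needed; without it, or some substitute of comparable depth, your argument does not go through.

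A secondary inaccuracy: you propose choosing $w$ so that ``the set of roots not contracted is finite''. That is not the combinatorial statement the paper proves and it is not true for the relevant $w$. Instead, for $a$ a lift of a Coxeter element, the paper partitions $\Delta_+$ into the (generally infinite) closed set $\Psi$ of roots kept positive by all powers of $w$ --- which contains all imaginary roots --- and its (generally infinite) closed complement; root groups indexed by $\Psi$ are contracted by $a$ and the others by $a^{-1}$ (Lemma~\ref{lemma decomposition} and Theorem~\ref{thm contracting}), so that $U^+\subseteq\langle\overline{\con(a)}\cup\overline{\con(a^{-1})}\rangle$. Every positive root group is contracted by $a$ or by $a^{-1}$; none is left over, but neither contraction set is cofinite.
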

As it turns out, it does not matter which completion of $\G_A(\FF_q)$ we are considering, see Theorem~\ref{thmintro simple pma} and Remark~\ref{remintro other completions} below.

After completion of this work, I was informed by Bertrand Rémy that, in a recent joint work \cite{RCap} with I.~Capdeboscq, they obtained independently a special case of this theorem, namely the abstract simplicity over finite fields of order at least $4$ and of characteristic $p$ in case $p$ is greater than $M=\max_{i\neq j}{|A_{ij}|}$. Their approach is similar to the one used in \cite{CER}.

Note that the topological simplicity of $\hat{\G}_A(\FF_q)$ (that is, the absence of nontrivial closed normal subgroups), which we will use in our proof of Theorem~\ref{mainthmintro}, was previously established by B.~Rémy when $q>3$ (see \cite[Theorem~2.A.1]{Remytopolsimple}); the tiniest finite fields were later covered by P-E.~Caprace and B.~Rémy (see \cite[Proposition~11]{CaRe}).

Note also that for incomplete groups, abstract simplicity fails in general since groups of affine type admit numerous congruence quotients. However, it has been shown by P-E.~Caprace and B.~Rémy (\cite{CaRe}) that $\G_A(\FF_q)$ is abstractly simple provided $A$ is indecomposable, $q>n>2$ and $A$ is not of affine type. They also recently covered the rank $2$ case for matrices $A$ of the form $A=(\begin{smallmatrix}2&-m\\ -1&2\end{smallmatrix})$ with $m>4$ (see \cite[Theorem~2]{CaRerk2}). 

\medskip

As mentioned at the beginning of this introduction, different completions of $\G(k)$ were considered in the literature, and therefore all deserve the name of ``complete Kac--Moody groups". We now proceed to review them briefly.

Essentially three such completions have been constructed so far, from very different points of view. The first construction, due to B.~Rémy and M.~Ronan (\cite{ReRo}), is the one we considered above. It is the completion of the image of $\G(k)$ in the automorphism group $\Aut(X_+)$ of its associated positive building $X_+$, where $\Aut(X_+)$ is equipped with the compact-open topology. For the rest of this paper, we will denote this group by $\G^{rr}(k)$, so that $\hat{\G}(k)=\G^{rr}(k)$ in our previous notation. To avoid taking a quotient of $\G(k)$, a variant of this group has also been considered by P-E.~Caprace and B.~Rémy (\cite[Section~1.2]{CaRe}). This latter group, here denoted $\G^{crr}(k)$, contains $\G(k)$ as a dense subgroup and admits $\G^{rr}(k)$ as a quotient.

The second construction, due to L.~Carbone and H.~Garland (\cite{CarboneGarland}), associates to a regular dominant integral weight $\lambda$ the completion, here denoted $\G^{cg\lambda}(k)$, of $\G(k)$ for the so-called weight topology. 

The third construction, of which we will make an essential use, was first introduced by O.~Mathieu (\cite[XVIII \S 2]{M88a}, \cite{M88b}, \cite[I and II]{M89}) and further developed by G.~Rousseau (\cite{Rousseau}). It is more algebraic and closer in spirit to the construction of $\G$. In fact, one gets a group functor over the category of $\ZZ$-algebras, which we will subsequently denote by $\G^{pma}$. As noted in \cite[3.20]{Rousseau}, this functor is a generalisation of the complete Kac--Moody group over $\CC$ constructed by S.~Kumar (\cite[Section~6.1.6]{Kumar}). Note that in this case the closure $\overline{\G(k)}$ of $\G(k)$ in $\G^{pma}(k)$ need not be the whole of $\G^{pma}(k)$. However, $\overline{\G(k)}=\G^{pma}(k)$ as soon as the characteristic of $k$ is zero or greater than the maximum $M$ (in absolute value) of the non-diagonal entries of $A$ (see \cite[Proposition~6.11]{Rousseau}).

These three constructions are strongly related, and hopefully equivalent. In particular, they all possess refined Tits systems whose associated building is the positive building $X_+$ of $\G(k)$ (with possibly different apartment systems). Moreover, there are natural continuous group homomorphisms $\overline{\G(k)}\to \G^{cg\lambda}(k)$ and $\G^{cg\lambda}(k)\to \G^{crr}(k)$ extending the identity on $\G(k)$ (see \cite[6.3]{Rousseau}). Their composition $\phi\co\overline{\G(k)}\to \G^{cg\lambda}(k)\to \G^{crr}(k)$
is an isomorphism of topological groups in many cases (see \cite[Théorème 6.12]{Rousseau}) and conjecturally in all cases.

If $G$ is either $\G^{pma}(k)$ or $\overline{\G(k)}$ or $\G^{cg\lambda}(k)$ or $\G^{crr}(k)$, we let $Z'(G)$ denote the kernel of the $G$-action on $X_+$. As mentioned in Remark~\ref{remintro other completions} below, Theorem~\ref{mainthmintro} immediately implies the abstract simplicity of $G/Z'(G)$ whenever $G$ is one of $\overline{\G(k)}$ or $\G^{cg\lambda}(k)$ or $\G^{crr}(k)$ (and $k$ is finite). As pointed out to me by Pierre-Emmanuel Caprace, our arguments in fact also imply the abstract simplicity of $\G^{pma}(k)/Z'(\G^{pma}(k))$, even when $\overline{\G(k)}\neq \G^{pma}(k)$:

\begin{thmintro}\label{thmintro simple pma}
Assume that the generalised Cartan matrix $A$ is indecomposable of indefinite type. Then $\G_A^{pma}(\FF_q)/Z'(\G_A^{pma}(\FF_q))$ is abstractly simple over any finite field $\FF_q$.
\end{thmintro}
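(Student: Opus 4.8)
The plan is to deduce Theorem~B from Theorem~A (the abstract simplicity of $\G^{crr}(\FF_q)/Z'(\G^{crr}(\FF_q))$, or equivalently of any of the other three completions modulo $Z'$) together with the structure of the kernel of the natural map $\G^{pma}(\FF_q)\to\G^{crr}(\FF_q)$. First I would recall that there is a natural continuous surjective homomorphism $\pi\co\overline{\G(\FF_q)}\to\G^{crr}(\FF_q)$ extending the identity on $\G(\FF_q)$, and that this induces a homomorphism $\bar\pi\co\G^{pma}(\FF_q)/Z'(\G^{pma}(\FF_q))\to\G^{crr}(\FF_q)/Z'(\G^{crr}(\FF_q))$; since the image of $\bar\pi$ contains the dense subgroup $\G(\FF_q)Z'/Z'$ and is normal (being the image of a normal-target situation — more precisely one checks the image is normalised by the dense image of $\G(\FF_q)$ and by $\G^{crr}$ acting transitively on the building), one argues using topological simplicity of $\G^{crr}/Z'$ (recalled in the introduction, after Caprace--R\'emy) that $\bar\pi$ is surjective. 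The point where the extra work of this paper is needed is that $\overline{\G(\FF_q)}$ need not equal $\G^{pma}(\FF_q)$ in small characteristic, so one cannot simply invoke Theorem~A for $\overline{\G(\FF_q)}$ directly; instead the whole of $\G^{pma}(\FF_q)$ must be handled, and this is exactly where the fact emphasised in the abstract — that both real and imaginary root subgroups (which exist in $\G^{pma}$ over any field) are contracted by conjugation under positive powers of suitable Weyl group elements — is used.

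Concretely, let $N\trianglelefteq\G^{pma}(\FF_q)$ be a normal subgroup not contained in $Z'(\G^{pma}(\FF_q))$. I would first show $N$ acts nontrivially on $X_+$, hence, by topological simplicity of $\G^{crr}(\FF_q)/Z'$ applied to the closure of the image of $N$, that $NZ'=\G^{pma}(\FF_q)$ would follow \emph{if} $N$ were closed; since $N$ need not be closed, the strategy is rather to show directly that $N$ contains enough root subgroups to generate $\G^{pma}(\FF_q)$ modulo $Z'$. For this, pick $g\in N\setminus Z'$. Using the refined Tits system (BN-pair) structure of $\G^{pma}(\FF_q)$ whose building is $X_+$, and the fact that $g$ moves some chamber, one produces, after multiplying $g$ by a suitable element, a nontrivial element lying in the unipotent radical $\U_w$ of a large parabolic; then conjugating by high powers $n^k$ of a Weyl element $n$ chosen so that $\Ad(n^k)$ contracts the relevant (real or imaginary) root groups into a single root group $\U_\alpha$, and passing to a limit in the pro-$p$ group $\U_w$ (here $\FF_q$ finite is crucial: $\U_\alpha$ for imaginary $\alpha$ can be infinite-dimensional, but $\U_w$ is a profinite, in fact pro-$p$, group, so the sequence $n^{-k}gn^k$ subconverges), one obtains a nontrivial element of $N\cap\U_\alpha$. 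A commutator computation inside the pro-$p$ group, together with the $\FF_q$-structure (finiteness of the prime field action, and transitivity of $\G^{pma}(\FF_q)$-conjugation on root groups of a given $W$-orbit), then upgrades this to: $N$ contains $\U_\alpha$ for at least one $\alpha$, hence by conjugation for all $\alpha$ in its Weyl orbit, hence (since $A$ is indecomposable, the Weyl group acts with finitely many orbits on real roots and one can reach every simple root group) for all simple root groups of both signs. As these generate $\G^{pma}(\FF_q)$ modulo $Z'$ — indeed they generate a group containing the derived group, and one invokes $\G^{pma}=\overline{\langle \U_\alpha\rangle}\cdot T$ modulo $Z'$ with $T$ central action-wise — we get $NZ'=\G^{pma}(\FF_q)$, and combined with Theorem~A (which gives that $N/(N\cap Z')$ is either trivial or all of $\G^{pma}/Z'$ once we know $N$ surjects) this forces $N\supseteq$ the whole derived situation, yielding $N Z' = \G^{pma}$ and finally, since $\G^{pma}/Z'$ is perfect and any proper normal subgroup would be contained in $Z'$, abstract simplicity.

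The main obstacle, and the step I expect to require the most care, is the limiting/contraction argument producing a nontrivial element of $N$ inside a \emph{single} root subgroup $\U_\alpha$: one must verify that the contraction by $\Ad(n^k)$ genuinely drives an arbitrary element of $\U_w$ into $\U_\alpha$ (not merely into a smaller parabolic unipotent), which is precisely the contraction property for real \emph{and imaginary} root groups asserted in the abstract and established earlier in the paper, and one must ensure the limit is nontrivial — this needs a quantitative control, via the filtration of the pro-$p$ group $\U_w$ by the height of roots, showing that the ``leading term'' of $g$ along a well-chosen direction survives the limit. A secondary subtlety is legitimately passing between $\G^{pma}(\FF_q)$ and its quotient $\G^{crr}(\FF_q)$: one must check that $Z'(\G^{pma}(\FF_q))$ is exactly the preimage of $Z'(\G^{crr}(\FF_q))$ under $\pi$, equivalently that $\pi$ does not collapse anything acting nontrivially on $X_+$, which follows from both groups acting on the \emph{same} building $X_+$ via compatible BN-pairs. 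Once these points are secured, the deduction of abstract simplicity is formal, exactly as in the passage from topological to abstract simplicity in the closed case, with Theorem~A supplying the needed input for the honest completion $\G^{crr}(\FF_q)$.
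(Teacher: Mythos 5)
There is a genuine gap at the heart of your argument. The step where you conjugate $g\in N$ by high powers of a Weyl element and ``pass to a limit in the pro-$p$ group $\U_w$'' to obtain a nontrivial element of $N\cap\U_\alpha$ does not work: $N$ is an \emph{abstract} normal subgroup, not a closed one, so the limit of a sequence $n^{-k}gn^{k}\in N$ lies only in $\overline{N}$, not in $N$. This is precisely the difficulty that separates abstract from topological simplicity, and no quantitative control of ``leading terms'' in the height filtration can fix it. The paper resolves it by invoking the Caprace--Reid--Willis theorem (Theorem~\ref{thm CRW}): a \emph{dense} normal subgroup of a totally disconnected locally compact group contains the \emph{closure} of $\con(g)$ for every $g$. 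That is a deep external input whose conclusion your limiting argument in effect assumes. Granted it, the proof is short: any nontrivial normal subgroup $K$ of $G/Z'(G)$ is dense by topological simplicity, hence contains $\overline{\con(\overline{w})}$ and $\overline{\con(\overline{w}\thinspace\inv)}$ for $w$ a Coxeter element, hence contains $\U^{ma+}(\FF_q)$ by Theorem~\ref{thm contracting}~(3), hence is open, hence closed, hence equal to $G/Z'(G)$. No propagation from a single root group, and no perfectness argument, is needed.

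Two further problems with your framing. First, there is no natural homomorphism $\G^{pma}(\FF_q)\to\G^{crr}(\FF_q)$: the comparison map $\phi$ is defined only on $\overline{\G(\FF_q)}$, which may be a proper subgroup of $\G^{pma}(\FF_q)$ precisely in the cases where Theorem~\ref{thmintro simple pma} says something new, and the image of $\G^{pma}(\FF_q)$ in $\Aut(X_+)$ may strictly contain that of $\G^{rr}(\FF_q)$. So Theorem~\ref{mainthmintro} cannot serve as input for Theorem~\ref{thmintro simple pma}; the paper instead proves both by one common argument. Second, that argument requires the \emph{topological} simplicity of $\G^{pma}(\FF_q)/Z'$ (to get density of $K$), which was not previously known in full generality and which your proposal does not address; the paper establishes it in Proposition~\ref{prop pma topolsimple} by combining Caprace--Monod's existence of a closed cocompact normal subgroup $H$ with $H/Z'$ topologically simple with the same contraction result (triviality of contraction groups in the compact quotient $G/H$ forces $\U^{ma+}(\FF_q)\leq H$, whence $H=G$).
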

Note that even the topological simplicity of $\G_A^{pma}(\FF_q)/Z'(\G_A^{pma}(\FF_q))$ was not previously known in full generality (see \cite[Lemme 6.14 and Proposition 6.16]{Rousseau} for known results). 

While the construction of Rémy--Ronan is more geometric in nature, the construction of Mathieu--Rousseau is purely algebraic and hence \emph{a priori} more suitable to establish algebraic properties of complete Kac--Moody groups. The present paper is a good illustration of this idea, and we hope it provides a good motivation for studying these ``algebraic completions" further.

\begin{remarkintro}\label{remintro other completions}
When the field $k$ is finite, the several group homomorphisms $\overline{\G(k)}\to \G^{cg\lambda}(k)\to \G^{crr}(k)\to \G^{rr}(k)\leq\Aut(X_+)$ are all surjective (see \cite[6.3]{Rousseau}), and if $G$ is either $\overline{\G(k)}$ or $\G^{cg\lambda}(k)$ or $\G^{crr}(k)$, the effective quotient of $G$ by the kernel $Z'(G)$ of its action on $X_+$ coincides with $\G^{rr}(k)$. If moreover the characteristic $p$ of $k$ is greater than the maximum $M$ (in absolute value) of the non-diagonal entries of $A$, one has $\overline{\G(k)}=\G^{pma}(k)$, and thus in that case there is only one simple group $G/Z'(G)$. Hence Theorem~\ref{thmintro simple pma} is a consequence of Theorem~\ref{mainthmintro} when $p>M$. If $p\leq M$, it is possible that the effective quotient of $\G^{pma}(k)$ inside $\Aut(X_+)$ properly contains $\G^{rr}(k)$ (see Corollary~\ref{corintro non density} below). When this happens, Theorem~\ref{thmintro simple pma} thus asserts the abstract simplicity of a different group than the one considered in Theorem~\ref{mainthmintro}.

Finally, we notice that, although we assumed the Kac--Moody groups to be of simply connected type to simplify the notations, the results remain valid for an arbitrary Kac--Moody root datum (see Remark~\ref{rem blabla}). 
\end{remarkintro}

\medskip

Along the proof of Theorems~\ref{mainthmintro} and \ref{thmintro simple pma}, we establish other results of independent interest, which we now proceed to describe.

Let $k$ be an arbitrary field. Fix a realisation of the generalised Cartan matrix $A=(a_{ij})_{1\leq i,j\leq n}$ as in \cite[\S 1.1]{Kac}. Let $Q=\sum_{i=1}^n{\ZZ\alpha_i}$ be the associated root lattice, where $\alpha_1,\dots,\alpha_n$ are the simple roots. Let also $\Delta$ (respectively, $\Delta_{\pm}$) be the set of roots (respectively, positive/negative roots), so that $\Delta=\Delta_+\sqcup\Delta_-$. Write also $\Delta^{\re}$ and $\Delta^{\im}$ (respectively, $\Delta_+^{\re}$ and $\Delta_+^{\im}$) for the set of (positive) real and imaginary roots. 

Recall that a subset $\Psi$ of $\Delta$ is {\bf closed} if $\alpha+\beta\in\Psi$ whenever $\alpha,\beta\in\Psi$ and $\alpha+\beta\in\Delta$. For a closed subset $\Psi$ of $\Delta_+$, define the sub-group scheme $\U^{ma}_{\Psi}$ of $\G^{pma}$ as in \cite[3.1]{Rousseau}. Set $\U^{ma+}=\U^{ma}_{\Delta_+}$. One can then define {\bf root groups} $\U^{ma}_{(\alpha)}$ in $\U^{ma+}$ by setting $\U^{ma}_{(\alpha)}:=\U^{ma}_{\{\alpha\}}$ for $\alpha\in\Delta_+^{\re}$ and $\U^{ma}_{(\alpha)}:=\U^{ma}_{\NN^*\alpha}$ for $\alpha\in\Delta_+^{\im}$, where $\NN^*=\NN\setminus\{0\}$.  

We also let $\B^+$, $\U^+$, $\N$ and $\T$ denote, as in \cite[1.6]{Rousseau}, the sub-functors of $\G=\G_A$ such that over $k$, $(\B^+(k)=\U^+(k)\rtimes\T(k),\N(k))$ is the canonical positive BN-pair attached to $\G(k)$, and $\N(k)/\T(k)\cong W$, where $W=W(A)$ is the Coxeter group attached to $A$. We fix once for all a section $W\cong \N(k)/\T(k)\to \N(k):w\mapsto\overline{w}$. Note that $\N$ can be viewed as a sub-functor of $G^{pma}$ (see \cite[3.12, Remarque 1]{Rousseau}). 

Finally, given a topological group $H$ and an element $a\in H$, we define the {\bf contraction group} $\con^H(a)$, or simply $\con(a)$, as the set of elements $g\in H$ such that $a^nga^{-n}\stackrel{n\to\infty}{\rightarrow} 1$. Note then that for any $a\in\overline{\G(k)}\subseteq\G^{pma}(k)$, one has $\varphi(\con^{\G^{pma}(k)}(a)\cap\overline{\G(k)})\subseteq \con^{\G^{rr}(k)}(\varphi(a))$, where we denote by $\varphi$ the composition $\overline{\G(k)}\stackrel{\phi}{\to} \G^{crr}(k)\to\G^{rr}(k)$.

\begin{thmintro}\label{thmBintro}
Let $k$ be an arbitrary field. 
\begin{enumerate}
\item
Let $\omega\in W$ and let $\Psi\subseteq\Delta_+$ be a closed set of positive roots such that $\omega\Psi\subseteq\Delta_+$. Then  $\overline{\omega}\U^{ma}_\Psi\overline{\omega}\thinspace\inv=\U^{ma}_{\omega\Psi}$.
\item
Let $\omega\in W$ and $\alpha\in\Delta_+$ be such that $\omega^l\alpha$ is positive and different from $\alpha$ for all positive integers $l$. Then $\U^{ma}_{(\alpha)}\subseteq \con^{\G^{pma}(k)}(\overline{\omega})$. In particular $\varphi(\U^{ma}_{(\alpha)}\cap\overline{\G(k)})\subseteq \con^{\G^{rr}(k)}(\overline{\omega})$.
\item
Assume that $A$ is of indefinite type. Then there exists some $\omega\in W$ such that $\U^{ma}_{(\alpha)}\subseteq \con^{\G^{pma}(k)}(\overline{\omega})\cup \con^{\G^{pma}(k)}(\overline{\omega}\thinspace\inv)$ for all $\alpha\in\Delta_+$. Hence root groups (associated to both real and imaginary roots) are contracted.
\end{enumerate}
\end{thmintro}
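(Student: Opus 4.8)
The plan is to derive statement~(3) from statement~(2) by producing a single Weyl group element $\omega$ that is ``long and generic'': of infinite order, \emph{straight} in the sense that $\ell(\omega^l)=l\,\ell(\omega)$ for all $l\ge 0$, and acting freely on $\Delta$, meaning no root is $\omega$\nobreakdash-periodic (i.e. $\omega^l\gamma=\gamma$ with $l\ge 1$ forces nothing, since it never happens). Granting such an $\omega$, I claim it works for every $\alpha\in\Delta_+$. Put $P_{\pm}=\{\beta\in\Delta_+ : \omega^{\pm l}\beta\in\Delta_+\text{ for all }l\ge 1\}$. If one shows $\Delta_+=P_+\cup P_-$, then for $\alpha\in P_+$ freeness gives $\omega^l\alpha\ne\alpha$ for all $l\ge 1$, so the hypotheses of (2) are met and $\U^{ma}_{(\alpha)}\subseteq\con(\overline{\omega})$; symmetrically $\U^{ma}_{(\alpha)}\subseteq\con(\overline{\omega}\inv)$ when $\alpha\in P_-$.

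The equality $\Delta_+=P_+\cup P_-$ splits into two cases. For an imaginary root $\alpha$ it is immediate and uses nothing about $\omega$: since $\Delta_+^{\im}$ is $W$-invariant (\cite{Kac}), $\omega^l\alpha\in\Delta_+^{\im}\subseteq\Delta_+$ for every $l\in\ZZ$, so $\alpha\in P_+\cap P_-$. For a real root $\alpha$ one invokes straightness: the inversion sets $N(\omega^l)=\{\beta\in\Delta_+^{\re} : \omega^{-l}\beta\in\Delta_-\}$ are nested because $\ell(\omega^{l+1})=\ell(\omega^l)+\ell(\omega)$, their union $I_+:=\bigcup_{l\ge1}N(\omega^l)$ is exactly the set of real positive roots not in $P_-$ and likewise $I_-:=\bigcup_{l\ge1}N(\omega^{-l})$ is the set of real positive roots not in $P_+$; moreover straightness says the bi-infinite gallery $(\omega^l C)_{l\in\ZZ}$ through the fundamental chamber $C$ is minimal, hence crosses each wall at most once, so a wall separating $C$ from the ``forward end'' cannot also separate it from the ``backward end'', i.e.\ $I_+\cap I_-=\varnothing$. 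Thus every real positive root lies in $P_+$ or in $P_-$. (Note that a root in $P_+\setminus P_-$ or $P_-\setminus P_+$ is automatically non-periodic, so freeness is really only needed for roots of $P_+\cap P_-$ with finite orbit, in particular for \emph{all} positive imaginary roots; this is precisely the obstruction forcing us to exclude affine type, where the null root $\delta$ is fixed by the translation part of $W$ and (3) indeed fails.)

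The crux, and the step I expect to be the main obstacle, is the construction of such an $\omega$, and this is where indecomposability and the indefinite-type hypothesis are used. Straightness together with infinite order can be obtained by taking $\omega$ to be a Coxeter element $s_1\cdots s_n$, invoking Speyer's theorem that powers of a Coxeter element in an infinite irreducible Coxeter group are reduced. The remaining, more delicate requirement is that no root be periodic, equivalently that $\Id$ be an eigenvalue of no power of $\omega$ acting on $Q_{\RR}$, so that $\bigcup_{l\ge1}\ker(\omega^l-\Id)$ meets $\Delta$ trivially. For this I would bring in the imaginary cone $Z$: indefinite type is characterised by $Z$ having nonempty interior, $W$ acts on $\mathring Z$, which lies inside the open positive cone, and a suitably chosen $\omega$ acts on $\mathbb{P}(Q_{\RR})$ with north--south dynamics whose attracting and repelling fixed rays lie in $\mathbb{P}(\mathring Z)$, forcing a dominant real eigenvalue $\lambda>1$. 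The genuinely subtle point is that a single dominant eigenvalue off the unit circle does not by itself prevent other, cyclotomic, eigenvalues (for instance an eigenvalue $-1$ when $n$ is odd, which would make $\omega^2$ fix a vector); ruling these out — or, failing that, replacing the Coxeter element by a better adapted straight element (obtained for instance from a ping-pong argument inside $\mathring Z$, or by composing with a reflection to annihilate any residual finite-order part) without destroying straightness or the north--south dynamics — is where the real work lies.
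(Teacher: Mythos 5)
Your reduction of (3) to (2) --- take a Coxeter element $w=s_1\cdots s_n$, get straightness from Speyer's theorem, deduce the dichotomy $\Delta_+=P_+\cup P_-$ from the nested inversion sets, and observe that periodicity only needs to be excluded for roots whose entire bi-infinite orbit stays positive --- is exactly the paper's route (its Lemma on monotonicity of $k\mapsto\sign(\omega^k\alpha)$ for straight $\omega$, specialised to the Coxeter element). But you leave the decisive step unproved: that no positive root is periodic under $w$ when $A$ is indecomposable of indefinite type. Your sketch via the imaginary cone and north--south dynamics is, by your own admission, unable to rule out cyclotomic eigenvalues of $w$, and you offer no concrete repair. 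This is a genuine gap, and it is precisely where the paper does something elementary that you missed. The matrix of $w$ in the basis of simple roots is $-A_1\inv A_2$, where $A=A_1+A_2$ with $A_1$ upper and $A_2$ lower triangular, both with $1$'s on the diagonal; equivalently $A_1(w-\Id)=-A$. Now suppose $w^k\alpha=\alpha$ for some $\alpha\in\Delta_+$ and $k\geq 1$. Monotonicity forces $w^i\alpha\in\Delta_+$ for all $i$, so $\beta:=(\Id+w+\dots+w^{k-1})\alpha$ is a nonzero vector with nonnegative coordinates satisfying $(w-\Id)\beta=0$, whence $A\beta=-A_1(w-\Id)\beta=0$. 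By Kac's Theorem 4.3, an indecomposable matrix of indefinite type admits no nonzero $\beta\geq 0$ with $A\beta\geq 0$; contradiction. This short computation replaces the entire spectral discussion you flag as ``where the real work lies,'' and no modification of the Coxeter element is ever needed.

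A secondary point: parts (1) and (2) of the theorem are not addressed at all in your proposal, yet they are part of the statement and neither is a tautology. For (1) one conjugates the individual root groups $\U^{ma}_{(\alpha)}$, $\alpha\in\Psi$, by $\overline{\omega}$ and passes to closures, using that the union of the $\U^{ma}_{\Psi_i}$ over finite closed subsets exhausting $\Psi$ is dense in $\U^{ma}_{\Psi}$. For (2), positivity and non-periodicity of the orbit $\{\omega^l\alpha\}$ are not enough by themselves: one must also know that $\height(\omega^l\alpha)\to\infty$ (an easy consequence of the orbit being infinite), so that $\overline{\omega}^{\,l}\U^{ma}_{(\alpha)}\overline{\omega}^{\,-l}=\U^{ma}_{(\omega^l\alpha)}$ is eventually contained in each member $\U^{ma}_n$ of the neighbourhood basis of the identity. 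Your argument for (3) implicitly relies on both of these, so they cannot simply be taken for granted.
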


The proof of Theorem~\ref{thmBintro} can be found at the end of Section~\ref{section contraction}. The idea to prove Theorem~\ref{mainthmintro} once Theorem~\ref{thmBintro} is established is the following. We let $a\in \N(\FF_q)$ be such that $\U^{ma}_{(\alpha)}(\FF_q)\subseteq \con^{\G^{pma}(\FF_q)}(a)\cup \con^{\G^{pma}(\FF_q)}(a\inv)$ for all $\alpha\in\Delta_+$, as in Theorem~\ref{thmBintro}~(3). We deduce that $\U^{rr+}(\FF_q)$ is contained in the subgroup generated by the closures of $\con^{\G^{rr}(\FF_q)}(a^{\pm 1})$. Now, as the topological simplicity of $\G^{rr}(\FF_q)$ is known, it suffices to consider a dense normal subgroup $K$ of $\G^{rr}(\FF_q)$. We can then conclude by invoking the following result of Caprace--Reid--Willis (\cite[Theorem~1.1]{Titscore}):

\begin{theorem}\label{thm CRW}
Let $G$ be a totally disconnected locally compact group and let $K$ be a dense normal subgroup of $G$. Then $K$ contains the closure in $G$ of $\con(g)$ for any $g\in G$.
\end{theorem}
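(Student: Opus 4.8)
The plan is to derive the statement from the structure theory of totally disconnected locally compact groups, reducing it to a concrete situation inside a compact (hence profinite) group where the density and normality of $K$ become usable. By van Dantzig's theorem $G$ has a neighbourhood basis of the identity consisting of compact open subgroups, and by Willis's theory we may pick one, $U$, that is \emph{tidy} for $g$; thus $U=U_+U_-$ with $U_\pm=\bigcap_{n\ge 0}g^{\pm n}Ug^{\mp n}$, the subgroup $U_{--}:=\bigcup_{n\ge 0}g^{-n}U_-g^n$ is closed, and the scale $s(g)=[gU_+g^{-1}:U_+]$ is finite. Conjugation by $g$ contracts $U_-$, that is $gU_-g^{-1}\subseteq U_-$, with $[U_-:gU_-g^{-1}]$ finite; the intersection $\bigcap_{n\ge 0}g^nU_-g^{-n}$ is the largest $g$-invariant subgroup of $U$, and in particular contains the nub $\nub(g)$; and a routine computation shows $\con(g)\subseteq U_{--}$, and hence $\overline{\con(g)}\subseteq U_{--}$ as $U_{--}$ is closed.

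Granting this, I would split the argument according to the known decomposition $\overline{\con(g)}=\con(g)\cdot\nub(g)$, proving that $K$ contains each factor. For $\con(g)\subseteq K$: given $x\in\con(g)$, the convergence $g^nxg^{-n}\to 1$ yields $g^Nxg^{-N}\in U_-$ for some $N$; since $K$ is normal in $G$, $x$ lies in $K$ if and only if $g^Nxg^{-N}$ does, so we may assume $x\in\con(g)\cap U_-$, with $g^nxg^{-n}\in U_-$ for every $n\ge 0$ and still $g^nxg^{-n}\to 1$. One then exploits the density of $K\cap U$ in the profinite group $U$, the finiteness of the scale (which bounds the indices governing how $U_-$ sits inside the chain $U_-\subseteq g^{-1}U_-g\subseteq g^{-2}U_-g^2\subseteq\cdots$), and the contraction dynamics of $g$ on $U_-$, to conclude that $x$ actually belongs to $K$. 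For $\nub(g)\subseteq K$: one uses the characterisation of the nub as a compact $g$-invariant subgroup admitting no proper open $g$-invariant subgroup; because $K$ is normal and $g$ normalises $\nub(g)$, the subgroup $K\cap\nub(g)$ is $g$-invariant, so as soon as it is shown to be open in $\nub(g)$ it must equal $\nub(g)$. Combining the two inclusions gives $\overline{\con(g)}=\con(g)\cdot\nub(g)\subseteq K$.

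The step I expect to be the main obstacle is precisely the passage from ``$K$ is dense in $G$'' to ``$x\in K$'' — needed both to push an element of $\con(g)\cap U_-$ into $K$ and to see that $K$ contains an open subgroup of $\nub(g)$. Since neither $K$ nor $\con(g)$ is closed in $G$, one cannot conclude by taking limits: replacing $x$ by a nearby element of $K$ and iterating produces only an infinite product of elements of $K$, which need not lie in $K$. What saves the argument is the quantitative control coming from tidiness — above all the finiteness of $s(g)$, which forces such an approximation process to terminate after finitely many steps — together with a careful analysis of the interaction between $\con(g)$, the nub, and the finite-index dilates $g^{-n}U_-g^n$.
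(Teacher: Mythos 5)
First, a point of comparison: the paper does not prove this statement at all --- it is imported verbatim as Theorem~1.1 of Caprace--Reid--Willis \cite{Titscore} and used as a black box, so there is no internal proof to measure your argument against. Judged on its own terms, your proposal correctly assembles the standard Willis-theoretic scenery (van Dantzig's theorem, a subgroup $U$ tidy for $g$, the groups $U_{\pm}$ and $U_{--}$, the inclusion $\con(g)\subseteq U_{--}$, the decomposition $\overline{\con(g)}=\con(g)\cdot\nub(g)$), and all of these preliminary assertions are accurate. But the theorem itself is not proved: both halves of your split end exactly where the real work begins, with an unexecuted ``one then concludes''.

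Concretely. (i) For $\con(g)\subseteq K$: after the legitimate reduction to $x\in\con(g)\cap U_-$, you invoke ``the density of $K\cap U$, the finiteness of the scale, and the contraction dynamics'' without exhibiting any mechanism, and the ingredient you single out cannot be the decisive one: by Willis's theorem the scale $s(g)$ is finite for \emph{every} element of \emph{every} totally disconnected locally compact group, so its finiteness cannot be what ``forces the approximation process to terminate''. If it did, the naive iteration you yourself correctly dismiss --- writing $x=k_0\,(gk_1g^{-1})(g^2k_2g^{-2})\cdots$ with $k_i\in K\cap U_-$ and a tail tending to $1$ --- would terminate for free, and the theorem would be an exercise; note also that in this half the normality of $K$ is used only for the harmless replacement of $x$ by $g^Nxg^{-N}$, whereas the theorem is false without normality, so an essential hypothesis has not yet entered the argument. (ii) For $\nub(g)\subseteq K$: the reduction to ``$K\cap\nub(g)$ is open in $\nub(g)$'' is sound given the characterisation of the nub as having no proper relatively open $\la g\ra$-invariant subgroup, but that openness --- indeed even the nontriviality of $K\cap\nub(g)$ --- is precisely the difficulty. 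Density of $K$ in $G$ gives no control over the intersection of $K$ with a closed subgroup, and no argument is offered. So the proposal has a genuine gap at its core: it identifies the right objects and the right obstacle, but does not contain the idea that overcomes the obstacle. The published proof of Caprace--Reid--Willis is a genuinely delicate argument occupying most of \cite{Titscore}; if a proof rather than a citation is wanted here, the missing step must be supplied in full.
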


The proof of Theorem~\ref{thmintro simple pma} follows the exact same lines, except that in this case the topological simplicity of the group is not known in full generality, and we need one more argument to establish it.

\smallskip

We also point out that Theorem~\ref{thmBintro} has another application, concerning the existence of non-closed contraction groups in complete Kac--Moody groups of non-affine type. Recall that in simple algebraic groups over local fields, contraction groups are always closed (they are in fact either trivial, or coincide with the unipotent radical of some parabolic subgroup). In particular they are closed in a complete Kac--Moody group $G$ over a finite field as soon as the defining generalised Cartan matrix $A$ is of non-twisted affine type. It has been shown in \cite{BRBcontraction} that, on the other hand, if $A$ is indecomposable non spherical, non affine and of size at least $3$, then the contraction group $\con(a)$ of some element $a\in G$ must be non-closed. The following result shows that this also holds when $A$ is indecomposable non spherical, non affine and of size $2$. 

\begin{thmintro}\label{thmintro closed contraction groups}
Let $A$ denote an $n\times n$ generalised Cartan matrix of indecomposable indefinite type, let $W=W(A)$ be the associated Weyl group, and let $w=s_1\dots s_n$ denote the Coxeter element of $W$. Let also $G$ be one the complete Kac--Moody groups $\G_A^{rr}(\FF_q)$ or $\G_A^{pma}(\FF_q)$ of simply connected type. Then the contraction group $\con^G(w)$ is not closed in $G$, unless maybe if $G=\G_A^{rr}(\FF_q)$ and $\U^{ma}_{\Delta_+^{\im}}(\FF_q)\cap \overline{\G(\FF_q)}$ is contained in the kernel of $\varphi$.
\end{thmintro}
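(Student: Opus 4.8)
The plan is to show that $\con^G(w)$, while containing the real root groups $\U^{ma}_{(\alpha)}$ for appropriate $\alpha$, fails to be closed because its closure captures (at least part of) the imaginary root groups, which are themselves \emph{not} contained in $\con^G(w)$. First I would fix the Coxeter element $w=s_1\cdots s_n$ and recall the standard fact about its action on $Q$: since $A$ is of indefinite type, $w$ has a spectral radius strictly larger than $1$, so there is a ``Tits cone'' dichotomy in which the positive real roots split into those whose $w$-orbit eventually leaves $\Delta_+$ (finitely many) and those for which $w^l\alpha$ stays positive and escapes to infinity. More precisely, by Theorem~\ref{thmBintro}(2), every positive real root $\alpha$ with $w^l\alpha\in\Delta_+\setminus\{\alpha\}$ for all $l\geq 1$ satisfies $\U^{ma}_{(\alpha)}\subseteq\con^G(w)$; and by the standard analysis of Coxeter-element dynamics (as in the proof of Theorem~\ref{thmBintro}(3)), all but finitely many positive real roots are of this form, while the finitely many exceptions lie in a ``spherical'' facet and generate, together with the negative side, a finite-type subgroup. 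Thus $\con^G(w)$ is ``almost all'' of $\U^{ma+}$.

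Next I would identify the closure. The key is that the subgroup of $\U^{ma+}(\FF_q)$ generated by all the $\U^{ma}_{(\alpha)}$ with $\alpha$ real and $w$-escaping is dense in $\U^{ma+}(\FF_q)$: indeed $\U^{ma+}$ is a pro-$p$ group whose ``layers'' $\U^{ma}_{\Psi}$ for cofinite closed $\Psi\subseteq\Delta_+$ shrink to the identity, and the escaping real roots, being cofinite in $\Delta_+^{\re}$, together with the containments in part (1) of Theorem~\ref{thmBintro} under conjugation, generate a subgroup meeting every layer nontrivially — in fact the complement is the (finite) product of the non-escaping real root groups and the imaginary root groups $\U^{ma}_{\NN^*\alpha}$. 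Hence $\overline{\con^G(w)}\supseteq\overline{\U^{ma+}(\FF_q)}$ (in $\G^{rr}$ this means $\overline{\U^{ma+}(\FF_q)}$ after applying $\varphi$; in $\G^{pma}$ it is all of $\U^{ma+}(\FF_q)$). In particular $\overline{\con^G(w)}$ contains the imaginary root groups $\U^{ma}_{(\beta)}$ for $\beta\in\Delta_+^{\im}$.

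Then I would show $\con^G(w)$ itself does \emph{not} contain these imaginary root groups (except in the stated exceptional case). The point is that an element $u\in\U^{ma}_{(\beta)}\setminus\{1\}$ with $\beta$ imaginary is fixed by $w$ in the sense that $\NN^*\beta$ is $w$-invariant up to the $W$-orbit structure on $\Delta^{\im}$ — more precisely, for a suitable imaginary root $\beta$ lying in the interior of the imaginary cone, the sequence $w^n u w^{-n}$ has ``components'' living in $\U^{ma}_{w^n(\NN^*\beta)}$, and since the imaginary cone is $W$-stable and the relevant roots do not go to infinity in the way needed for convergence (their $w$-orbit stays ``bounded below'' in height relative to the pro-$p$ filtration), $w^n u w^{-n}$ does not converge to $1$. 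Concretely one uses that $\con^G(w)\cap\U^{ma+}$ is a closed subgroup (being the contraction group intersected with a compact subgroup, which is always closed in the t.d.l.c.\ setting when one intersects with a compact open subgroup — or one argues directly) whose ``support'' avoids the imaginary directions; since this support, intersected with the finitely many non-escaping real roots plus all imaginary roots, is forced to be trivial by a height/filtration estimate, no nontrivial imaginary root element lies in $\con^G(w)$. The exceptional clause arises precisely because in $\G^{rr}(\FF_q)=\varphi(\overline{\G(\FF_q)})$ the imaginary root groups may die under $\varphi$ (when $\U^{ma}_{\Delta_+^{\im}}(\FF_q)\cap\overline{\G(\FF_q)}\subseteq\ker\varphi$), in which case the above obstruction disappears and one cannot conclude.

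Finally I would assemble: $\overline{\con^G(w)}$ contains a nontrivial imaginary root element $u$ (by the density argument of the second paragraph), but $\con^G(w)$ does not (by the third paragraph), so $\con^G(w)\subsetneq\overline{\con^G(w)}$, i.e.\ $\con^G(w)$ is not closed. The main obstacle I anticipate is the third paragraph: proving rigorously that nontrivial imaginary root elements are \emph{not} contracted by $w$. This requires a careful quantitative understanding of how conjugation by $\overline{w}$ moves elements through the pro-$p$ filtration of $\U^{ma+}$ — in particular showing that the imaginary directions are ``neutral'' (neither contracted nor expanded in a way that produces convergence), which hinges on the precise description of $w$-dynamics on the imaginary cone and on the fact, implicit in Mathieu--Rousseau's construction, that imaginary root groups are honestly visible in $\U^{ma+}$ and survive to a nontrivial subgroup of $\G^{rr}(\FF_q)$ away from the exceptional case. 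One must also take care, in the $\G^{pma}$ case versus the $\G^{rr}$ case, to track whether we work before or after applying $\varphi$, since the whole subtlety of the statement lives in that distinction.
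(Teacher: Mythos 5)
There is a genuine gap, and it sits exactly where you anticipated: your third paragraph. You claim that a nontrivial element $u\in\U^{ma}_{(\beta)}$ with $\beta\in\Delta_+^{\im}$ is \emph{not} contracted by $w$ because the $w$-orbit of $\NN^*\beta$ ``stays bounded below in height''. This is false, and in fact contradicts Theorem~\ref{thmBintro}(2)--(3) of the paper itself: since $A$ is of indefinite type, $w^l\beta\neq\beta$ for all $l\neq 0$ (Lemma~\ref{lemma no fixed root for w}), hence $\height(w^l\beta)\to\infty$ (Lemma~\ref{lemma root groups contracting}), and so each \emph{individual} imaginary root group $\U^{ma}_{(\beta)}=\U^{ma}_{\NN^*\beta}$ is genuinely contained in $\con(\overline{w})$ or $\con(\overline{w}\thinspace\inv)$ --- indeed the contraction of imaginary root groups is advertised in the abstract as a key point of the paper. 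The correct obstruction lives one level up: it is the \emph{full} imaginary subgroup $U^{im+}=\U^{ma}_{\Delta_+^{\im}}(\FF_q)$ (the closed group containing all infinite products over imaginary roots) that cannot lie in $\con(\overline{w})$. The paper's argument is: $U^{im+}$ is compact (closed in the compact group $\U^{ma+}(\FF_q)$) and is normalised by $\overline{w}$ because $\Delta_+^{\im}$ is $W$-stable (Proposition~\ref{prop conjugation root groups}); by Lemma~\ref{lemma compact contracted} any compact subset of $\con(a)$ is \emph{uniformly} contracted, which is impossible for a nontrivial $a$-invariant compact subgroup since $a^nU^{im+}a^{-n}=U^{im+}$ for all $n$. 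On the other side, $U^{im+}\subseteq\overline{\con(\overline{w})}$ because $\Delta_+^{\im}\subseteq\Psi$ and the finite-stage subgroups $\U^{ma}_{\Psi_i}$ are contracted with dense union (Lemma~\ref{lemma conseq them contracting}); no density of \emph{real} escaping root groups is needed or available for this. Your auxiliary claim that $\con^G(w)\cap\U^{ma+}$ is automatically closed is also false --- if it were, it would contain $U^{im+}$ and the theorem would fail.

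Two smaller inaccuracies: the set of positive real roots eventually sent negative by powers of $w$ is the increasing union of the inversion sets of $w^l$, of cardinality $l\cdot\ell(w)$, so it is infinite, not finite; and $\overline{\con^G(w)}$ does not contain all of $\U^{ma+}(\FF_q)$ --- only $\U^{ma}_{\Psi}(\FF_q)$, while $\U^{ma}_{\Delta_+\setminus\Psi}(\FF_q)$ sits in $\overline{\con^G(w\inv)}$ (Theorem~\ref{thm contracting}). Neither of these is fatal, since all that is needed from your second paragraph is $U^{im+}\subseteq\overline{\con^G(w)}$, which does hold; but the mechanism in your third paragraph must be replaced by the compactness/uniform-contraction argument (or equivalently by the criterion that $\con(a)$ is closed iff $\nub(a)=\overline{\con(a)}\cap\overline{\con(a\inv)}$ is trivial, combined with $U^{im+}\subseteq\nub(\overline{w})$). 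You do handle the exceptional $\G^{rr}$ clause correctly: the argument needs the image $\varphi(\U^{ma}_{\Delta_+^{\im}}(\FF_q)\cap\overline{\G(\FF_q)})$ to be nontrivial.
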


Finally, here is a last application of our results concerning isomorphism classes of Kac--Moody groups and their completions. While over infinite fields, it is known that two minimal Kac--Moody groups can be isomorphic only if their ground fields are isomorphic and their underlying generalised Cartan matrices coincide up to a row-column permutation (see \cite[Theorem A]{thesePE}), this fails to be true over finite fields. Indeed, over a given finite field, two minimal Kac--Moody groups associated with two different generalised Cartan matrices of size $2$ can be isomorphic, as noticed in \cite[Lemma 4.3]{thesePE}. The following result shows that, however, the corresponding Mathieu--Rousseau completions should not be expected to be isomorphic as topological groups.

\begin{thmintro}\label{thmintro isom KM}
Let $k=\FF_q$ be a finite field with $\charact k\neq 2$. Then there exist minimal Kac--Moody groups $G_1=\G_{A_1}(\FF_q)$ and $G_2=\G_{A_2}(\FF_q)$ over $\FF_q$ associated to $2\times 2$ generalised Cartan matrices $A_1$, $A_2$, such that $G_1$ and $G_2$ are isomorphic as abstract groups, but their Mathieu--Rousseau completions $\G^{pma}_{A_1}(\FF_q)$ and  $\G^{pma}_{A_2}(\FF_q)$ are not isomorphic as topological groups. 
\end{thmintro}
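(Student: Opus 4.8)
The plan is to exploit the fact, established in \cite[Lemma 4.3]{thesePE}, that over a finite field $\FF_q$ with $\charact k\neq 2$ certain $2\times 2$ generalised Cartan matrices give rise to isomorphic minimal Kac--Moody groups: concretely one takes $A_1=\left(\begin{smallmatrix}2&-m_1\\ -m_1&2\end{smallmatrix}\right)$ and $A_2=\left(\begin{smallmatrix}2&-m_2\\ -m_2&2\end{smallmatrix}\right)$ (or the relevant non-symmetric variants covered by that lemma) with $m_1\neq m_2$ both large enough that both matrices are of indefinite type, and checks that the abstract isomorphism $G_1\cong G_2$ provided there does not preserve the building/topological structure. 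So the first step is simply to recall the precise pair of matrices and the isomorphism from \cite{thesePE}, and to note that both $A_1,A_2$ are indecomposable of indefinite type, so that Theorems~\ref{mainthmintro}--\ref{thmBintro} apply to both.

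The second, and main, step is to produce a topological invariant of $\G^{pma}_A(\FF_q)$ that distinguishes the two completions. Here I would use the structure of contraction groups, which Theorem~\ref{thmBintro} and Theorem~\ref{thmintro closed contraction groups} put under our control. For a Coxeter element $w=s_1s_2$, the contraction group $\con^{\G^{pma}_A(\FF_q)}(\overline{w})$ contains all the root groups $\U^{ma}_{(\alpha)}$ with $\alpha$ ranging over the positive roots sent to positive roots by all positive powers of $w$, i.e. (in the rank-$2$ hyperbolic case) over all of $\Delta_+$ except the two ``boundary'' real roots; in particular it contains the imaginary root groups $\U^{ma}_{(\alpha)}$, $\alpha\in\Delta_+^{\im}$. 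The key combinatorial point is that the growth of the root multiplicities $\dim\g_\alpha$ for imaginary roots $\alpha$, and hence the ``size'' of the imaginary root groups, depends on the entries of $A$: for $A=\left(\begin{smallmatrix}2&-m\\ -m&2\end{smallmatrix}\right)$ the imaginary roots and their multiplicities are governed by $m$ (this is classical Kac--Moody root-system combinatorics, cf.\ \cite[Ch.~5]{Kac}), so the filtration of $\con^{\G^{pma}_A(\FF_q)}(\overline{w})$ by the descending central-type series of $\U^{ma+}$, together with the cardinalities of the successive quotients, is a topological-group invariant that reads off $m$. Thus $\G^{pma}_{A_1}(\FF_q)\cong\G^{pma}_{A_2}(\FF_q)$ as topological groups would force the two root-multiplicity generating functions to coincide, contradicting $m_1\neq m_2$.

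More precisely, I would argue as follows. Any topological isomorphism $\Phi\co\G^{pma}_{A_1}(\FF_q)\to\G^{pma}_{A_2}(\FF_q)$ carries contraction groups to contraction groups and closures to closures; composing with the refined Tits system structure (both groups act strongly transitively on their building $X_+$, which is a tree since $n=2$), $\Phi$ must carry a maximal pro-$p$ subgroup — which one identifies with $\U^{ma+}(\FF_q)$ up to conjugacy, cf.\ the discussion after Theorem~\ref{thmintro simple pma} — to such a subgroup. One then compares the two pro-$p$ groups $\U^{ma+}_{A_1}(\FF_q)$ and $\U^{ma+}_{A_2}(\FF_q)$: both carry a canonical filtration indexed by height (the $\U^{ma}_\Psi$ for $\Psi=\{\alpha\in\Delta_+:\height\alpha\geq N\}$), which is preserved under any topological automorphism because it is definable from the lower central series together with the Frattini series; and the logarithmic cardinality of the $N$-th graded piece is $\sum_{\height\alpha=N}\dim\g_\alpha\cdot\log q$. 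Since the sequence $\bigl(\sum_{\height\alpha=N}\dim\g_\alpha\bigr)_N$ determines $A$ up to the symmetries already accounted for (in the rank-$2$ symmetric case it determines $m$), the completions cannot be isomorphic.

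The main obstacle, and the place where care is needed, is the identification of $\U^{ma+}(\FF_q)$ (or rather the relevant canonical filtration on it) purely in topological-group terms, independently of the Kac--Moody presentation: one must check that the height filtration, or at least enough of the root-multiplicity data, is recoverable from the abstract topological group — e.g.\ as a suitable lower-central / Frattini-type series of a maximal pro-$p$ subgroup, intersected with contraction groups of Coxeter elements as furnished by Theorem~\ref{thmBintro}~(3). A secondary subtlety is to ensure that the two matrices one picks are genuinely covered by the isomorphism of \cite[Lemma 4.3]{thesePE} while having distinct imaginary-root multiplicity sequences; this is a finite check on small hyperbolic $2\times 2$ matrices and should cause no real difficulty, the condition $\charact k\neq 2$ being exactly what \cite{thesePE} needs for the abstract isomorphism to exist.
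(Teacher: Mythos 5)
Your proposal has a genuine gap at its central step. You propose to take two \emph{indefinite} matrices $A_1,A_2$ (with $m_1\neq m_2$, $m_1\equiv m_2\ (\modulo q-1)$) and to distinguish $\G^{pma}_{A_1}(\FF_q)$ from $\G^{pma}_{A_2}(\FF_q)$ by recovering the imaginary root multiplicities from the topological group, via the height filtration on $\U^{ma+}(\FF_q)$. But you never establish that this filtration (or the multiplicity data $\sum_{\height\alpha=N}\dim\g_\alpha$) is an invariant of the abstract topological group: your suggestion that it is ``definable from the lower central series together with the Frattini series'' is an assertion, not an argument, and it is far from clear — the lower central/Frattini series of these pro-$p$ groups need not coincide with the height filtration, and a topological isomorphism need not respect any root-graded structure. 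You yourself flag this as ``the main obstacle''; it is in fact the whole content of the theorem in your setup, so the proof is not complete. (Two smaller inaccuracies: in rank $2$ the set of positive roots contracted by a Coxeter element is an infinite family of real roots together with $\Delta_+^{\im}$, not ``all of $\Delta_+$ except two boundary roots''; and the hypothesis $\charact k\neq 2$ is not what \cite[Lemma 4.3]{thesePE} needs for the abstract isomorphism — the congruence conditions $\modulo\ q-1$ suffice for that — it is needed elsewhere, as explained below.)

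The paper takes a different and more robust route that avoids constructing any such invariant. It chooses $A_1=(\begin{smallmatrix}2&-2\\ -2&2\end{smallmatrix})$ of \emph{affine} type and $A_2=(\begin{smallmatrix}2&-m\\ -n&2\end{smallmatrix})$ with $m,n>2$ and $m\equiv n\equiv 2\ (\modulo q-1)$, of indefinite type; Lemma~\ref{lemma exceptional isom} gives $G_1\cong G_2$ as abstract groups. The distinguishing topological invariant is then simply the \emph{closedness of contraction groups}: $\widehat{G}_1/Z_1'$ is a simple algebraic group over $\FF_q(\!(t)\!)$, so all its contraction groups are closed; whereas in $\widehat{G}_2$ the compact subgroup $U^{im+}=\U^{ma}_{\Delta_+^{\im}}(\FF_q)$ is normalised by $a=\overline{w}$ and contained in $\overline{\con(a)}$ (Lemma~\ref{lemma conseq them contracting}), hence by Lemma~\ref{lemma compact contracted} its image in $\widehat{G}_2/Z_2'$ would have to be trivial if $\con(\pi(a))$ were closed. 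Lemma~\ref{lemma non degeneracy action} — and this is precisely where $\charact k\neq 2$ enters — shows $U^{im+}\not\subseteq Z_2'$, giving the contradiction. To salvage your approach you would either need to prove the topological recoverability of the multiplicity data (a substantial and unaddressed problem), or switch to the affine-versus-indefinite comparison as the paper does.
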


This surprising result provides in particular the first known families of examples over arbitrary finite fields (of characteristic at least $3$) of minimal Kac--Moody groups that are not dense in their Mathieu--Rousseau completion (up to now, the only known such family was given over $\FF_2$ in \cite[6.10]{Rousseau}).

\begin{corintro}\label{corintro non density}
Let $k=\FF_q$ be a finite field with $\charact k\neq 2$. Let $A=(\begin{smallmatrix}2&-m \\ -n&2\end{smallmatrix})$ be a generalised Cartan matrix with $m,n>2$ and assume that  $m\equiv n \equiv 2 \ (\modulo q-1)$. Then the minimal Kac--Moody group $\G_A(\FF_q)$ of simply connected type is not dense in its Mathieu--Rousseau completion $\G^{pma}_A(\FF_q)$.
\end{corintro}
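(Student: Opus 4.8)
The plan is to read the statement off from the mechanism underlying Theorem~\ref{thmintro isom KM}, the matrices $A$ in the statement being precisely those which, under the hypothesis $m\equiv n\equiv 2\pmod{q-1}$, produce a minimal Kac--Moody group abstractly isomorphic to the one attached to the affine matrix $A_0=(\begin{smallmatrix}2&-2\\-2&2\end{smallmatrix})$ of type $A_1^{(1)}$. The first step is to establish that $\G_A(\FF_q)\cong\G_{A_0}(\FF_q)$ by an isomorphism $f$ \emph{preserving the twin root datum}. The congruences $m\equiv n\equiv 2\pmod{q-1}$ say that the simple roots $\alpha_1,\alpha_2$ induce the same characters of the (simply connected, rank-$2$) torus $\T(\FF_q)\cong(\FF_q^\times)^2$ for $A$ as for $A_0$; and since in rank $2$ with infinite Weyl group the pair $\{\alpha_1,\alpha_2\}$ is never prenilpotent (the anti-fundamental chamber lies outside the Tits cone), there is no Steinberg relation linking $\U_{\alpha_1}$ and $\U_{\alpha_2}$, so the presentation of $\G_A(\FF_q)$ depends only on $\FF_q$, on $\T$ and on the $\T$-action on the simple root groups, hence only on $A$ modulo $q-1$. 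As observed in \cite[Lemma~4.3]{thesePE} this yields the abstract isomorphism; moreover it may be chosen to send $\U_{\pm\alpha_i}$ to $\U_{\pm\alpha_i}$, $\T$ to $\T$ and $\N$ to $\N$, hence root groups to root groups and the filtration of $\U^+$ by Weyl-group depth to the corresponding one (even though it does not respect the $Q$-gradings, since it permutes root groups of different heights).

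Next I would upgrade $f$ to a topological isomorphism of the completions. Because on each side the depth filtration of $\U^+$ is cofinal with the height filtration defining the topology of $\G^{pma}$, the homomorphism $f$ maps a neighbourhood basis of $1$ onto a neighbourhood basis of $1$, hence is uniformly continuous and extends to a topological isomorphism $\overline{f}\co\overline{\G_A(\FF_q)}\xrightarrow{\sim}\overline{\G_{A_0}(\FF_q)}$. Now suppose for contradiction that $\G_A(\FF_q)$ is dense in $\G^{pma}_A(\FF_q)$, i.e.\ $\overline{\G_A(\FF_q)}=\G^{pma}_A(\FF_q)$. Since $\charact\FF_q\neq 2$ we have $\charact\FF_q>2=\max_{i\neq j}|(A_0)_{ij}|$, so $\overline{\G_{A_0}(\FF_q)}=\G^{pma}_{A_0}(\FF_q)$ by \cite[Proposition~6.11]{Rousseau}; thus $\overline{f}$ is a topological isomorphism $\G^{pma}_A(\FF_q)\xrightarrow{\sim}\G^{pma}_{A_0}(\FF_q)$. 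But $A_0$ is of non-twisted affine type, so (as recalled before Theorem~\ref{thmintro closed contraction groups}) every contraction group in $\G^{pma}_{A_0}(\FF_q)$ is closed, whereas $A$ is indecomposable of indefinite type, so Theorem~\ref{thmintro closed contraction groups} applied to $G=\G^{pma}_A(\FF_q)$ (where the exceptional clause, reserved for $\G^{rr}$, does not occur) shows that $\con^{\G^{pma}_A(\FF_q)}(\overline{w})$ is not closed, $w=s_1s_2$ being the Coxeter element. This contradiction establishes that $\G_A(\FF_q)$ is not dense in $\G^{pma}_A(\FF_q)$.

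The main obstacle is the structural step in the first paragraph: one must check carefully that the abstract isomorphism of \cite[Lemma~4.3]{thesePE} can be taken to respect the twin root datum, and then that this is enough to make it continuous for the $\G^{pma}$-topologies. The only delicate point in the latter is that $f$ shuffles root groups without preserving heights, which is harmless because the depth and height filtrations of $\U^+$ induce the same topology; once this is granted, the remainder is a short formal argument combining the congruence hypothesis with \cite[Proposition~6.11]{Rousseau} and Theorem~\ref{thmintro closed contraction groups}.
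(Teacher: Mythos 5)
Your overall strategy is the paper's: exploit the exceptional isomorphism $\G_A(\FF_q)\cong\G_{A_0}(\FF_q)$ with $A_0=(\begin{smallmatrix}2&-2\\-2&2\end{smallmatrix})$, use density on the affine side (via \cite[Proposition~6.11]{Rousseau}) and the hypothetical density on the indefinite side to compare the two completions, and derive a contradiction from the closed/non-closed dichotomy for contraction groups. The final contradiction you invoke (Theorem~\ref{thmintro closed contraction groups} versus closedness of contraction groups in affine type) is also essentially the one the paper uses, though the paper derives it at the level of the quotients $\widehat{G}_i/Z_i'$ via Lemmas~\ref{lemma conseq them contracting}, \ref{lemma compact contracted} and \ref{lemma non degeneracy action}.

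The gap is in your second step, where you upgrade $f$ to a topological isomorphism $\overline{f}\co\overline{\G_A(\FF_q)}\to\overline{\G_{A_0}(\FF_q)}$ of the closures inside the respective $\G^{pma}$'s. Your justification is that ``the depth filtration of $\U^+$ is cofinal with the height filtration defining the topology of $\G^{pma}$''. One inclusion is easy: the subgroup generated by the real root groups $\U_\alpha$ with $\alpha$ of large depth lies in $\U^{ma}_n$ for large $n$, since heights are additive and only finitely many real roots have bounded height. But the reverse inclusion --- that every element of $\U^+(\FF_q)$ whose Rousseau normal form (Proposition~\ref{prop formal sum Rousseau}) is supported in heights $\geq n$ is a product of real root group elements of large depth --- is not proved and is genuinely delicate: it amounts to asserting that the $\G^{pma}$-topology restricted to the minimal group is intrinsic to the twin root datum, which is exactly the kind of comparison between the Mathieu--Rousseau and building topologies that is only known in special cases (the paper explicitly records that the map $\phi\co\overline{\G(k)}\to\G^{crr}(k)$ is a topological isomorphism only ``in many cases'' and conjecturally in general). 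Beware also that the naive depth filtration $\{\U^+\cap w^N\U^+w^{-N}\}_N$ along powers of a Coxeter element does not even shrink to the identity (it contains a simple root group for all $N$), so the cofinality claim must be formulated and proved with care; as stated it cannot simply be ``granted''.

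The paper sidesteps this entirely: since the isomorphism of Lemma~\ref{lemma exceptional isom} carries the twin BN-pair of $G_1$ to that of $G_2$, it automatically induces a topological isomorphism of the R\'emy--Ronan completions $\G^{rr}_{A_0}(\FF_q)\cong\G^{rr}_{A}(\FF_q)$, these being defined directly from the action on the building. The density hypotheses then give topological isomorphisms $\G^{pma}_{A_i}(\FF_q)/Z_i'\cong\G^{rr}_{A_i}(\FF_q)$ (using that $\varphi_i$ is a continuous open surjection with kernel $Z_i'$ when the minimal group is dense and the field is finite), and the contradiction is reached at the level of these effective quotients. If you reroute your argument through $\G^{rr}$ in this way, the rest of your proof goes through; as written, the continuity of $f$ for the $pma$-topologies is an unjustified and nontrivial claim.
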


The proof of these statements will be given in Section~\ref{section proof corollaries}. 

\medskip

The paper is organised as follows. We first fix some notations and provide an outline of the construction of Mathieu--Rousseau completions in Section~\ref{section notation}. We next prove some preliminary results about the Coxeter group $W$ and the set of roots $\Delta$ in Section~\ref{section W and Delta}. We then use these results to prove a more precise version of Theorem~\ref{thmBintro} in Section~\ref{section contraction}. We establish its consequences in Section~\ref{section proof corollaries}, and we conclude the proof of Theorems~\ref{mainthmintro} and \ref{thmintro simple pma} in Section~\ref{section proof thmA}.

\subsection*{Acknowledgement.}
I am very grateful to Pierre-Emmanuel Caprace for proposing this problem to me in the first place, as well as for numerous helpful comments. I would also like to thank the anonymous referee for his/her useful comments.

\section{Preliminaries}\label{section notation}
\subsection{Notations}
Throughout this paper, we write $\NN^*$ (resp. $\ZZ^*$) for the set of nonzero natural numbers (resp. nonzero integers). 

For the rest of this paper, $k$ denotes an arbitrary field and $A=(a_{ij})_{i,j\in I}$ denotes a generalised Cartan matrix indexed by $I=\{1,\dots,n\}$. We fix a realisation $(\hh,\Pi,\Pi^{\vee})$ of $A$ as in \cite[\S 1.1]{Kac}. We then keep all notations from the introduction. In particular, $\Delta$ is the corresponding set of roots and $\Pi=\{\alpha_1,\dots,\alpha_n\}$ (resp. $\Pi^{\vee}=\{\alpha_1^{\vee},\dots,\alpha_n^{\vee}\}$) the set of simple roots (resp. coroots). For $\alpha\in\Delta$, we denote by $\height(\alpha)$ its height.

Recall the definition of the Tits functor $\G=\G_A$ and of its sub-functors $\B^+$, $\U^+$, $\N$ and $\T$. Again, $\G^{rr}(k)$ denotes the Rémy--Ronan completion of $\G(k)$ and $\U^{rr+}(k)$ the completion in $\G^{rr}(k)$ of $\U^+(k)$, so that $(\U^{rr+}(k)\rtimes\T(k), \N(k))$ is a BN-pair for $\G^{rr}(k)$ (see \cite[Proposition 1]{CaRe}). We will give more details about the Mathieu--Rousseau completion $\G^{pma}(k)$ of $\G(k)$ in Section~\ref{subsection outline MR completion} below.

As before, $W=W(A)\cong\N(k)/\T(k)$ is the Coxeter group associated to $A$, with generating set $S=\{s_1,\dots,s_n\}$ such that $s_i(\alpha_j)=\alpha_j-a_{ij}\alpha_i$ for all $i,j\in I$, and we fix a section $W\cong \N(k)/\T(k)\to \N(k):w\mapsto\overline{w}$.

Finally, to avoid cumbersome notation, we will write $\con(a)$ for both contraction groups $\con^{\G^{pma}(k)}(a)$ and $\con^{\G^{rr}(k)}(a)$, as $k$ is fixed and as it will be always clear in which group we are working.

\subsection{The Mathieu--Rousseau completion}\label{subsection outline MR completion}
We now outline the construction of the Mathieu--Rousseau completion of $\G$ and give its basic properties, as it will play an important role in what follows. The general reference for this section is \cite{Rousseau}.

\subsubsection*{Some notations}
Let $\Lambda^{\vee}$ be the free $\ZZ$-module generated by $\Pi^{\vee}$, and let $\Lambda$ be its $\ZZ$-dual, which we view as a $\ZZ$-form of the dual $\hh^*$. In particular, $\Lambda$ contains $\Pi$. Then, as we are considering a Tits functor $\G_A$ of simply connected type, the torus $\T(k)=\T_{\Lambda}(k)=\Hom_{gr}(\Lambda,k^\times)$ is generated by $\{r^{h} \ | \ r\in k^\times, \ h\in\Pi^{\vee}\}$, where $$r^{h}\co \Lambda\to k^\times: \lambda\mapsto r^{\la \lambda,h\ra}.$$

Let $\g$ denote the Kac--Moody algebra of $\G$ with root space decomposition $\g=\hh\oplus\bigoplus_{\alpha\in\Delta}{\g_{\alpha}}$, and let $e_1,\dots,e_n$ and $f_1,\dots,f_n$ be the corresponding Chevalley generators, so that $\g_{\alpha_i}=\CC e_i$ and $\g_{-\alpha_i}=\CC f_i$ for all $i\in I$. 
Let also $\UU$ denote the $\ZZ$-form of the enveloping algebra $\UU_{\CC}(\g)$ of $\g$ introduced by J.~Tits (see e.g. \cite[Section~2]{Rousseau}): this is a $\ZZ$-bialgebra graded by $Q:=\bigoplus_{i\in I}{\ZZ\alpha_i}$ and containing the elements $e_i^{(l)}:=e_i^l/l!$ and $f_i^{(l)}:=f_i^l/l!$ ($l\in\NN$, $i\in I$). We write $\UU_{\alpha}$ for the weight space corresponding to $\alpha\in Q$. The $W$-action on $\Delta$ induces a $W$-action on $\UU_{\CC}(\g)$ with $s_i$ ($i\in I$) acting as $$s_i^*=\exp(\ad e_i)\exp(\ad f_i)\exp(\ad e_i)\in \Aut(\UU_{\CC}(\g)).$$
This $W$-action preserves $\UU$, and given $\alpha\in\Delta^{\re}$ such that $\alpha=w\alpha_i$ for some $w\in W$ and $i\in I$, the element $e_{\alpha}=w^*e_i$ is well defined (up to a choice of sign) and is a $\ZZ$-basis for $\g_{\alpha\ZZ}:=\g_{\alpha}\cap \UU$. In particular, we may choose $e_{-\alpha_i}:=f_i$ as a basis for $\g_{-\alpha_i\ZZ}$. For a ring $R$, we also set $\g_{\alpha R}:=\g_{\alpha\ZZ}\otimes_{\ZZ}R$.

For a closed set $\Psi\subseteq\Delta$, we define the $\ZZ$-subalgebra $\UU(\Psi)$ of $\UU$ generated by all $\UU^{\alpha}:=\UU_{\CC}(\oplus_{n\geq 1}\g_{n\alpha})\cap\UU$ for $\alpha\in\Psi$. If in addition $\Psi\subseteq w(\Delta_+)$ for some $w\in W$, we may also define the completion $\widehat{\UU}_R(\Psi)$ of $\UU(\Psi)$ over any ring $R$ as $$\widehat{\UU}_R(\Psi)=\prod_{\alpha\in w.Q_+}{(\UU(\Psi)_{\alpha}\otimes_{\ZZ}R)},$$
where $Q_+:=\bigoplus_{i\in I}{\NN\alpha_i}$ and $\UU(\Psi)_{\alpha}=\UU(\Psi)\cap\UU_\alpha$.

\subsubsection*{Pro-unipotent groups}
The first step in the construction of $\G^{pma}$ is to define for each closed set $\Psi\subseteq\Delta_+$ of positive roots the affine group scheme $\U^{ma}_{\Psi}$ (which we view as a group functor) whose algebra is the restricted dual $$\ZZ[\U^{ma}_{\Psi}]:=\bigoplus_{\alpha\in\NN\Psi}{\UU(\Psi)^*_{\alpha}}$$ of $\UU(\Psi)$. In other words,
 $$\U^{ma}_{\Psi}(R)=\Hom_{\Zalg}(\ZZ[\U^{ma}_{\Psi}],R) \quad \textrm{for any ring $R$.}$$
One can then define {\bf root groups} $\U^{ma}_{(\alpha)}$ in $\U^{ma+}:=\U^{ma}_{\Delta_+}$ by setting $\U^{ma}_{(\alpha)}=\U_{\alpha}:=\U^{ma}_{\{\alpha\}}$ for $\alpha\in\Delta_+^{\re}$ and $\U^{ma}_{(\alpha)}:=\U^{ma}_{\NN^*\alpha}$ for $\alpha\in\Delta_+^{\im}$. For $\alpha\in\Delta^{\re}_+$, one can define similarly the root group $\U_{-\alpha}=\U^{ma}_{\{-\alpha\}}$ as above, with $\Psi$ replaced by $\{-\alpha\}$. In other words, for each $\alpha\in\Delta^{\re}$, the real root group $\U_{\alpha}$ is isomorphic to the additive group scheme $\GG_a$ by $$x_{\alpha}\co\GG_a(R)\stackrel{\sim}{\to}\U_{\alpha}(R):r\mapsto \exp(re_{\alpha})$$ for any ring $R$. Note that, identifying $\{\U_{\alpha}(R) \ | \ \alpha\in\Delta^{\re}\}$ with the root group datum of $\G(R)$, the element $\overline{s_i}\in\N(R)$ lifting $s_i\in W$ may then be chosen as 
$$\overline{s_i}=x_{\alpha_i}(1)x_{-\alpha_i}(1)x_{\alpha_i}(1)=\exp(e_i)\exp(f_i)\exp(e_i).$$

The group functor $\U^{ma}_{\Psi}$ admits a nice description in terms of root groups, which we now briefly review. For each $x\in\g_{\alpha\ZZ}$, $\alpha\in\Delta_+$, G.~Rousseau makes a choice of an {\bf exponential sequence}, namely of a sequence $(x^{[n]})_{n\in\NN}$ where $x^{[0]}=1$, $x^{[1]}=x$, and $x^{[n]}\in\UU_{n\alpha}$ is such that $x^{[n]}-x^n/n!$ has filtration less than $n$ in $\UU_{\CC}(\g)$ for each $n\in\NN$, and which satisfies some additional compatibility condition with the bialgebra structure on $\UU$ (see \cite[Propositions~2.4 and 2.7]{Rousseau}). Such an exponential sequence for $x$ is unique up to modifying each $x^{[n]}$, $n\geq 2$, by an element of $\g_{n\alpha\ZZ}$. For a ring $R$ and an element $\lambda\in R$, one can then define the {\bf twisted exponential} $$[\exp]\lambda x:=\sum_{n\geq 0}{\lambda^nx^{[n]}}\in\widehat{\UU}_R(\Delta_+).$$
Note that for $\alpha$ a real root, one can take the usual exponential. For each $\alpha\in\Delta_+$, let $\BB_{\alpha}$ be a $\ZZ$-basis of $\g_{\alpha\ZZ}$. For $\alpha\in\Delta^{\re}$, we choose $\BB_{\alpha}=\{e_{\alpha}\}$. Finally, for a closed subset $\Psi\subseteq \Delta_+$, set $\BB_\Psi=\bigcup_{\alpha\in\Psi}{\BB_{\alpha}}$. Here is the announced description of $\U^{ma}_{\Psi}$:

\begin{prop}[{\cite[Proposition~3.2]{Rousseau}}] \label{prop formal sum Rousseau}
Let $\Psi\subseteq\Delta_+$ be closed and let $R$ be a ring. Then $\U^{ma}_{\Psi}(R)$ can be identified to the multiplicative subgroup of $\widehat{\UU}_R(\Psi)$ consisting of the products $$\prod_{x\in\BB_{\Psi}}{[\exp]\lambda_xx}$$ for $\lambda_x\in R$, where the product is taken in any (arbitrary) chosen order on $\BB_\Psi$. The expression of an element of $\U^{ma}_{\Psi}(R)$ in the form of such a product is unique.
\end{prop}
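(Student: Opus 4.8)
The plan is to realise $\U^{ma}_\Psi(R)$ as the group of \emph{group-like} elements of the completed algebra $\widehat{\UU}_R(\Psi)$, and then to straighten each such element into a unique ordered product of twisted exponentials by an induction on height. For the duality step, write $\delta$ and $\varepsilon$ for the comultiplication and counit of the $\ZZ$-bialgebra $\UU(\Psi)$ (and their extensions to the completion). Since every weight space $\UU(\Psi)_\alpha$ is free of finite rank over $\ZZ$, the coordinate ring $\ZZ[\U^{ma}_\Psi]=\bigoplus_{\alpha\in\NN\Psi}\UU(\Psi)^*_\alpha$ is the graded dual of $\UU(\Psi)$, and for any ring $R$ one has a canonical $R$-module identification $\Hom_\ZZ(\ZZ[\U^{ma}_\Psi],R)=\prod_{\alpha\in\NN\Psi}(\UU(\Psi)_\alpha\otimes_\ZZ R)=\widehat{\UU}_R(\Psi)$. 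Because the multiplication and unit of $\ZZ[\U^{ma}_\Psi]$ are dual, respectively, to $\delta$ and $\varepsilon$, an element $g\in\widehat{\UU}_R(\Psi)$ lies in $\U^{ma}_\Psi(R)=\Hom_{\Zalg}(\ZZ[\U^{ma}_\Psi],R)$ exactly when it is group-like, i.e. $\delta(g)=g\otimes g$ in the completed tensor product and $\varepsilon(g)=1$; and since the comultiplication of $\ZZ[\U^{ma}_\Psi]$ is dual to the multiplication of $\UU(\Psi)$, the group law on $\U^{ma}_\Psi(R)$ corresponds to the multiplication of $\widehat{\UU}_R(\Psi)$, so the group-like elements form a subgroup of its units, namely $\U^{ma}_\Psi(R)$.

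Next I would check the inclusion ``$\supseteq$''. The compatibility with the bialgebra structure built into an exponential sequence $(x^{[n]})_n$ (see \cite[Proposition~2.7]{Rousseau}) amounts to $\delta(x^{[n]})=\sum_{i+j=n}x^{[i]}\otimes x^{[j]}$, so that $[\exp]\lambda x=\sum_{n\ge0}\lambda^nx^{[n]}$ satisfies $\delta([\exp]\lambda x)=[\exp]\lambda x\otimes[\exp]\lambda x$, while $\varepsilon([\exp]\lambda x)=1$ because $x^{[0]}=1$ and $x^{[n]}$ lies in a nonzero positive weight space for $n\ge1$; hence $[\exp]\lambda x\in\U^{ma}_\Psi(R)$. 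Fixing any total order on $\BB_\Psi$, the possibly infinite product $\prod_{x\in\BB_\Psi}[\exp]\lambda_x x$ converges in $\widehat{\UU}_R(\Psi)$, since for each $N$ only the finitely many factors with $\height x\le N$ affect weights of height $\le N$; being a product of group-like elements it is again group-like, hence belongs to $\U^{ma}_\Psi(R)$.

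For the inclusion ``$\subseteq$'' and uniqueness, take $g\in\U^{ma}_\Psi(R)$ group-like and write $g=\sum_{m\ge0}g_m$ with $g_m$ the height-$m$ homogeneous component, so $g_0=1$. If $m\ge1$ is least with $g_m\ne0$, then extracting from $\delta(g)=g\otimes g$ the components of bidegree $(i,j)$ with $i+j=m$ forces $g_m$ to be primitive (the $g\otimes g$ contribution in height $m$ vanishes, as it would require two factors of height $\ge m$ summing to $m$); hence $g_m\in\bigoplus_{\height\beta=m}\g_{\beta R}$, because the primitive part of $\UU$ in a nonzero weight $\beta$ is exactly $\g_{\beta\ZZ}$. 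Using that $\Psi$ is closed — so $\NN^*\alpha\subseteq\Psi$ for imaginary $\alpha\in\Psi$, while $2\alpha\notin\Delta$ for $\alpha$ real, and every root weight occurring in $\UU(\Psi)$ lies in $\Psi$ — one concludes $g_m=\sum_{\height x=m}\lambda_x x$ for uniquely determined $\lambda_x\in R$, and comparing with the lowest nontrivial component of $\prod_{x\in\BB_\Psi}[\exp]\mu_x x$ shows these are the only admissible values. Replacing $g$ by $g\cdot\bigl(\prod_{\height x=m}[\exp]\lambda_x x\bigr)^{-1}$, which is group-like with all components of height $\le m$ trivial, and iterating, one determines every $\lambda_x$ uniquely; completeness of $\widehat{\UU}_R(\Psi)$ for the height filtration then yields $\prod_{x\in\BB_\Psi}[\exp]\lambda_x x=g$. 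This settles ``$\subseteq$'' and uniqueness simultaneously for the chosen order, and re-running the argument for another order only changes the bookkeeping of the higher-height corrections.

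The main obstacle I expect is precisely this last step: one has to handle the real root groups (where $\g_{\alpha\ZZ}$ has rank one and $[\exp]$ is the genuine exponential) together with the imaginary ones and all their multiples $\NN^*\alpha$ at once, and verify carefully that the primitive component $g_m$ really is a $\ZZ$-combination (over $R$) of the chosen basis vectors $\BB_\beta\subseteq\BB_\Psi$ — this is where closedness of $\Psi$ and the identification of the primitives of $\UU$ are genuinely needed, and where one must be careful about possible torsion in $R$. A secondary technical point is to make infinite ordered products in $\widehat{\UU}_R(\Psi)$ rigorous for an arbitrary total order on $\BB_\Psi$ and to confirm that bijectivity of $(\lambda_x)_{x\in\BB_\Psi}\mapsto\prod_{x\in\BB_\Psi}[\exp]\lambda_x x$ is independent of that order.
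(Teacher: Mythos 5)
This statement is not proved in the paper at all: it is quoted verbatim from Rousseau (his Proposition~3.2), so there is no in-paper argument to compare yours against. Your sketch does, however, follow essentially the same route as Rousseau's original proof: identify $\U^{ma}_{\Psi}(R)=\Hom_{\Zalg}(\ZZ[\U^{ma}_{\Psi}],R)$ with the group-like elements of $\widehat{\UU}_R(\Psi)$ via the graded duality (using that each $\UU(\Psi)_\alpha$ is a finite-rank free $\ZZ$-module), check that twisted exponentials are group-like thanks to the coalgebra compatibility $\delta(x^{[n]})=\sum_{i+j=n}x^{[i]}\otimes x^{[j]}$, and then straighten an arbitrary group-like element by induction on the height filtration. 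The one genuinely load-bearing input you correctly flag but do not supply is the identification, \emph{after base change to $R$}, of the primitive elements of $\UU(\Psi)_\beta\otimes_\ZZ R$ with $\g_{\beta\ZZ}\otimes_\ZZ R$ (equivalently, that $\UU(\Psi)$ admits a $\ZZ$-basis of ordered products of the $x^{[n]}$, $x\in\BB_\Psi$); this is exactly what Rousseau's Section~2 on exponential sequences and good filtrations establishes, and without it the step ``$g_m$ primitive $\Rightarrow$ $g_m\in\bigoplus_{\height\beta=m}\g_{\beta R}$'' is an assertion rather than a proof. Modulo that imported structural fact, your argument is sound and is the intended one.
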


A consequence of this proposition which we will use later on is the following (see \cite[Lemme~3.3]{Rousseau}).

\begin{lemma}\label{lemma decomp Rousseau}
Let $\Psi'\subseteq\Psi\subseteq\Delta_+$ be closed subsets of roots. Then $\U^{ma}_{\Psi'}$ is a closed subgroup of $\U^{ma}_{\Psi}$. Moreover, if $\Psi\setminus\Psi'$ is closed as well, then there is a unique decomposition $\U^{ma}_\Psi=\U^{ma}_{\Psi'}.\U^{ma}_{\Psi\setminus\Psi'}$.
\end{lemma}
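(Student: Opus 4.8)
The plan is to derive both assertions from the normal-form description of Proposition~\ref{prop formal sum Rousseau}, together with the elementary observation, built into the construction of the bases $\BB_\Psi$, that $\BB_\Psi=\BB_{\Psi'}\sqcup\BB_{\Psi\setminus\Psi'}$ whenever $\Psi'\subseteq\Psi$: indeed $\BB_\Psi=\bigcup_{\alpha\in\Psi}\BB_\alpha$ and the $\BB_\alpha$ lie in pairwise distinct weight spaces of $\UU$.

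For the first assertion I would fix a ring $R$ and a total order on $\BB_\Psi$ in which every element of $\BB_{\Psi'}$ precedes every element of $\BB_\Psi\setminus\BB_{\Psi'}$. By Proposition~\ref{prop formal sum Rousseau} applied to $\Psi$, each element of $\U^{ma}_\Psi(R)$ is uniquely of the form $\prod_{x\in\BB_\Psi}[\exp]\lambda_x x$; applying the same proposition to the closed set $\Psi'$ with the induced order on $\BB_{\Psi'}$, the subset of those products with $\lambda_x=0$ for all $x\in\BB_\Psi\setminus\BB_{\Psi'}$ is precisely the image of $\U^{ma}_{\Psi'}(R)$ inside $\widehat{\UU}_R(\Psi')\subseteq\widehat{\UU}_R(\Psi)$. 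Hence $\U^{ma}_{\Psi'}(R)\subseteq\U^{ma}_\Psi(R)$ for every $R$, and since both group laws are induced by the multiplication of $\widehat{\UU}_R(\Psi)$, this inclusion is a morphism of group functors. That it is a closed immersion I would read off from the dual picture: $\UU(\Psi')\hookrightarrow\UU(\Psi)$ is a morphism of $Q_+$-graded $\ZZ$-bialgebras whose homogeneous components are free $\ZZ$-modules of finite rank, and $\UU(\Psi')$ is a graded $\ZZ$-module direct summand of $\UU(\Psi)$ (use a PBW-type $\ZZ$-basis of $\UU(\Psi)$ by ordered monomials in divided powers of the root vectors attached to $\Psi$, with those supported on $\Psi'$ coming first); passing to restricted graded duals therefore turns it into a \emph{surjection} of coordinate Hopf algebras $\ZZ[\U^{ma}_\Psi]\twoheadrightarrow\ZZ[\U^{ma}_{\Psi'}]$.

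For the second assertion I would add the hypothesis that $\Psi'':=\Psi\setminus\Psi'$ is closed, so that Proposition~\ref{prop formal sum Rousseau} applies to it as well. Keeping the order on $\BB_\Psi$ above, any $u\in\U^{ma}_\Psi(R)$ factors as
$$u=\prod_{x\in\BB_\Psi}[\exp]\lambda_x x=\Bigl(\prod_{x\in\BB_{\Psi'}}[\exp]\lambda_x x\Bigr)\Bigl(\prod_{x\in\BB_{\Psi''}}[\exp]\lambda_x x\Bigr)=u'u'',$$
where $u'\in\U^{ma}_{\Psi'}(R)$ and $u''\in\U^{ma}_{\Psi''}(R)$ by the first part and Proposition~\ref{prop formal sum Rousseau}; conversely every such product lies in $\U^{ma}_\Psi(R)$. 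Thus $\U^{ma}_\Psi(R)=\U^{ma}_{\Psi'}(R)\cdot\U^{ma}_{\Psi''}(R)$ for every $R$, and uniqueness of the factorisation $u=u'u''$ is immediate from the uniqueness of the normal form: if $u'u''=v'v''$, expanding both sides in the fixed order forces all coefficients to coincide, whence $u'=v'$ and $u''=v''$. At the level of schemes this says the multiplication morphism $\U^{ma}_{\Psi'}\times\U^{ma}_{\Psi''}\to\U^{ma}_\Psi$ is an isomorphism, which one may also read off from the graded coalgebra identity $\UU(\Psi)\cong\UU(\Psi')\otimes_\ZZ\UU(\Psi'')$ furnished by the same PBW ordering.

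The pointwise statements above should be essentially bookkeeping with the normal form and the freedom to reorder $\BB_\Psi$; the step I expect to be the real obstacle is the algebro-geometric sharpening — that restricted graded duality sends $\UU(\Psi')\hookrightarrow\UU(\Psi)$ to a surjection of coordinate rings and $\UU(\Psi)\cong\UU(\Psi')\otimes_\ZZ\UU(\Psi'')$ to an isomorphism. Both rest on the existence of a PBW-type $\ZZ$-basis of $\UU(\Psi)$ adapted to the splitting $\Psi=\Psi'\sqcup\Psi''$ (so that the $\Psi'$-monomials form a basis of $\UU(\Psi')$), together with the $\ZZ$-finiteness of the graded pieces that makes restricted duality exact; locating or proving this basis within the construction of $\UU$ and of the algebras $\UU(\Psi)$ is the one genuinely nontrivial input.
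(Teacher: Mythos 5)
Your argument is correct and follows exactly the route the paper indicates: the paper gives no proof of its own but states the lemma as a consequence of Proposition~\ref{prop formal sum Rousseau} (citing Rousseau's Lemme~3.3), and your derivation from the unique normal form $\prod_{x\in\BB_\Psi}[\exp]\lambda_x x$ with an order putting $\BB_{\Psi'}$ first, plus the PBW-type splitting $\BB_\Psi=\BB_{\Psi'}\sqcup\BB_{\Psi\setminus\Psi'}$, is precisely that argument. You also correctly identify the only nontrivial input, namely the adapted PBW $\ZZ$-basis of $\UU(\Psi)$ underlying Proposition~\ref{prop formal sum Rousseau}, which is supplied by Rousseau's construction.
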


\subsubsection*{Minimal parabolics}
The next step in the construction of $\G^{pma}$ is to define, for each $i\in I$, the {\bf minimal parabolic subgroup} $\B_i^{ma+}$ of type $i$ as the semi-direct product of $\U^{ma}_{\Delta_+\setminus\{\alpha_i\}}$ with the unique connected affine algebraic group $\A^{\Lambda}_{i}$ associated to the Kac--Moody root datum $(\{1\},(2),\Lambda, \{\alpha_i\},\{\alpha_i^{\vee}\})$ (see \cite[Theorem~10.1.1]{Springer}). Note that $\A^{\Lambda}_{i}$ contains $\T$, $\U_{\alpha_i}$ and $\U_{-\alpha_i}$ as closed subgroups and is generated by them. To define this semi-direct product, it is thus sufficient to describe for each ring $R$ conjugation actions of $\T(R)=\Hom_{gr}(\Lambda,R^\times)$ and $\U_{\alpha}(R)=\{\exp(re_{\alpha}) \ | \ r\in R \}$ on $\U^{ma}_{\Delta_+\setminus\{\alpha_i\}}$, for $\alpha\in\{\pm\alpha_i\}$. For $t\in \T(R)$, this is defined using Proposition~\ref{prop formal sum Rousseau} by
$$\Int(t)\cdot [\exp]\lambda x=[\exp]t(\gamma)\lambda x\quad\textrm{if $x\in\g_{\gamma R}$}.$$
For $\alpha\in\{\pm\alpha_i\}$, we set 
$$\Int(\exp(e_\alpha))(z)=\sum_{m\geq 0}{(\ad(e_{\alpha})^m/m!)(z)}$$
for all $z\in\U^{ma}_{\Delta_+\setminus\{\alpha_i\}}(R)$, where $\U^{ma}_{\Delta_+\setminus\{\alpha_i\}}(R)$ is viewed as a subset of either $\widehat{\UU}_R(\Delta_+)$ or $\widehat{\UU}_R(s_i(\Delta_+))$, depending on whether $\alpha=\alpha_i$ or $\alpha=-\alpha_i$.

The following lemma will be crucial for us.
\begin{lemma}\label{lemma conjug root groups}
For any $\alpha\in\Delta_+$ and any $w\in W$ such that $w\alpha\in\Delta_+$, one has
$$\overline{w}\U^{ma}_{(\alpha)}\overline{w}\thinspace\inv=\U^{ma}_{(w\alpha)}.$$
\end{lemma}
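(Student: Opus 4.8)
The plan is to prove Lemma~\ref{lemma conjug root groups} by reducing it to the case of a single simple reflection and then iterating, using the structural decompositions of $\U^{ma}_\Psi$ recalled in Lemma~\ref{lemma decomp Rousseau} together with the explicit conjugation formulas for the minimal parabolics. First I would observe that it suffices to treat the case $w=s_i$ for some $i\in I$: if $w=s_{i_1}\cdots s_{i_r}$ is a reduced expression and $w\alpha\in\Delta_+$, then every partial tail $s_{i_j}\cdots s_{i_r}\alpha$ is again positive (this is a standard fact about reduced words and positive roots), so each intermediate step is of the simple-reflection type and we can apply that case repeatedly, using $\overline{w}=\overline{s_{i_1}}\cdots\overline{s_{i_r}}$ up to the chosen section. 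So the crux is: for $\alpha\in\Delta_+$ with $s_i\alpha\in\Delta_+$ (equivalently $\alpha\neq\alpha_i$), show $\overline{s_i}\,\U^{ma}_{(\alpha)}\,\overline{s_i}\thinspace\inv=\U^{ma}_{(s_i\alpha)}$.

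For the simple-reflection case I would work inside $\B_i^{ma+}=\A^\Lambda_i\ltimes\U^{ma}_{\Delta_+\setminus\{\alpha_i\}}$. Since $\alpha\neq\alpha_i$, both $\alpha$ and $s_i\alpha$ lie in $\Delta_+\setminus\{\alpha_i\}$, so $\U^{ma}_{(\alpha)}$ and $\U^{ma}_{(s_i\alpha)}$ are subgroups of $\U^{ma}_{\Delta_+\setminus\{\alpha_i\}}$, and $\overline{s_i}\in\A^\Lambda_i$ normalises $\U^{ma}_{\Delta_+\setminus\{\alpha_i\}}$ by construction. The conjugation action of $\overline{s_i}=\exp(e_i)\exp(f_i)\exp(e_i)$ on $\U^{ma}_{\Delta_+\setminus\{\alpha_i\}}(R)$, realised inside $\widehat{\UU}_R(\Delta_+)$, is given by composing the three $\Int(\exp(\pm e_i))$ maps, which on the level of the completed enveloping algebra is exactly the restriction of the automorphism $s_i^*=\exp(\ad e_i)\exp(\ad f_i)\exp(\ad e_i)$ of $\UU_\CC(\g)$ (extended to the completion). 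Now $s_i^*$ sends $\UU^\alpha=\UU_\CC(\oplus_{n\geq1}\g_{n\alpha})\cap\UU$ onto $\UU^{s_i\alpha}$ because $s_i^*$ permutes the root spaces $\g_\beta\mapsto\g_{s_i\beta}$ (with $\g_{n\alpha}\mapsto\g_{ns_i\alpha}$) and preserves $\UU$; hence it sends $\UU(\{\alpha\})$ or $\UU(\NN^*\alpha)$ onto $\UU(\{s_i\alpha\})$ or $\UU(\NN^*s_i\alpha)$. Dually, $\overline{s_i}$ conjugates $\U^{ma}_{(\alpha)}(R)$ — the group of twisted-exponential products over a $\ZZ$-basis of the relevant $\g_{n\alpha\ZZ}$'s, by Proposition~\ref{prop formal sum Rousseau} — isomorphically onto $\U^{ma}_{(s_i\alpha)}(R)$. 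Doing this functorially in $R$ gives the equality of group schemes.

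There is one bookkeeping subtlety I would need to address: the exponential sequences $(x^{[n]})$ defining the twisted exponentials are chosen separately for each root, so $\overline{s_i}\cdot[\exp]\lambda x\cdot\overline{s_i}\thinspace\inv$ need not literally be $[\exp]\lambda(s_i^*x)$ on the nose — it will be a twisted exponential for $s_i^*x$ \emph{with respect to the sequence} $(s_i^*(x^{[n]}))$, which differs from the pre-chosen one only by the ambiguity of exponential sequences, namely by elements of $\g_{ns_i\alpha\ZZ}$ for $n\geq2$. But that ambiguity is precisely the slack present in Proposition~\ref{prop formal sum Rousseau} (the group $\U^{ma}_{(s_i\alpha)}(R)$ does not depend on the choices), so modulo absorbing these correction terms — which lie in $\U^{ma}_{(s_i\alpha)}(R)$ anyway — the image is still exactly $\U^{ma}_{(s_i\alpha)}(R)$. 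I expect this reconciliation of the different exponential-sequence conventions to be the only genuinely delicate point; once it is handled, both inclusions $\overline{s_i}\,\U^{ma}_{(\alpha)}\,\overline{s_i}\thinspace\inv\subseteq\U^{ma}_{(s_i\alpha)}$ and (by applying the same argument with $s_i\alpha$ in place of $\alpha$ and using $\overline{s_i}\thinspace^2\in\T$ acting diagonally on each root group) the reverse inclusion follow, and the general case is obtained by the induction on $\ell(w)$ sketched above.
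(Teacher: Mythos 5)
Your proof is correct and follows essentially the same route as the paper's: reduce to a single simple reflection $s_i$ (the paper does this implicitly via ``this amounts to showing''), then observe that conjugation by $\overline{s_i}=\exp(e_i)\exp(f_i)\exp(e_i)$ inside the semidirect product $\B_i^{ma+}=\A^{\Lambda}_i\ltimes\U^{ma}_{\Delta_+\setminus\{\alpha_i\}}$ acts as $s_i^*$ and hence sends $[\exp]\lambda x$ to a twisted exponential of $s_i^*x$, giving $\overline{s_i}\,\U^{ma}_{(\alpha)}\,\overline{s_i}\thinspace\inv=\U^{ma}_{(s_i\alpha)}$. Your extra care about the ambiguity of exponential sequences (absorbed by the uniqueness-up-to-$\g_{n\alpha\ZZ}$ slack in Proposition~\ref{prop formal sum Rousseau}) is a legitimate point that the paper compresses into ``readily follows from the definition of the semi-direct product.''
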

\begin{proof}
For $\alpha$ a real root, this is \cite[3.11]{Rousseau}. In any case, this amounts to showing that, whenever $s_i\in W$ is such that $s_i(\alpha)\in\Delta_+$, one has $$\overline{s_i}\cdot([\exp]x)\cdot\overline{s_i}\thinspace\inv=[\exp](s_i^*x)$$ for any homogenous $x\in\oplus_{n\geq 1}{\g_{n\alpha R}}$, with $R$ an arbitrary ring. This last statement readily follows from the definition of the semi-direct product defining $\B_i^{ma+}$.
\end{proof}

\subsubsection*{The group scheme $\G^{pma}$}
The {\bf Mathieu--Rousseau completion} $\G^{pma}$ of $\G$ is then defined as some amalgamated product of the minimal parabolics $\B_i^{ma+}$, $i\in I$ (see \cite[3.6]{Rousseau}). Over the field $k$, the identification of $\{\U_{\alpha}(k) \ | \ \alpha\in\Delta^{\re}\}$ with the root group datum of $\G(k)$ (as well as the identification of the tori $\T(k)$ of $\G(k)$ and $\G^{pma}(k)$) induces an injection of $\G(k)$ in $\G^{pma}(k)$ (see \cite[Proposition~3.13]{Rousseau}). The Borel subgroup $\B^{ma+}(k)=\T(k)\ltimes\U^{ma+}(k)$ and $\N(k)$ form a BN-pair for $\G^{pma}(k)$ with associated building the positive building of $\G(k)$ (see \cite[3.16]{Rousseau}).

The topology on $\G^{pma}(k)$ is given as follows. For each $n\in\NN$, set $\U^{ma}_n:=\U^{ma}_{\Psi(n)}$, where $\Psi(n)=\{\alpha\in\Delta^+ \ | \ \height(\alpha)\geq n\}$. 
\begin{lemma}[{\cite[6.3.6]{Rousseau}}] \label{lemma topoMathieu}
$\G^{pma}(k)$ is a complete (Hausdorff) topological group with basis of neighbourhoods of the identity the subgroups $\U^{ma}_n(k)$, $n\in\NN$.
\end{lemma}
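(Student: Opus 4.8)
The plan is to verify directly that the collection $\{\U^{ma}_n(k)\}_{n\in\NN}$ satisfies the axioms for a basis of neighbourhoods of the identity in a (Hausdorff, complete) topological group, and then invoke the standard fact that such a collection determines a unique group topology. First I would check that each $\U^{ma}_n$ is a subgroup: since $\Psi(n)=\{\alpha\in\Delta_+ \mid \height(\alpha)\geq n\}$ is closed in $\Delta_+$ (a sum of two roots of height $\geq n$ has height $\geq 2n\geq n$), the functor $\U^{ma}_{\Psi(n)}$ is a group scheme by construction, so $\U^{ma}_n(k)$ is genuinely a subgroup of $\U^{ma+}(k)$. The chain $\Psi(0)\supseteq\Psi(1)\supseteq\cdots$ gives, via Lemma~\ref{lemma decomp Rousseau}, that the $\U^{ma}_n(k)$ form a descending chain of closed subgroups of $\U^{ma+}(k)=\U^{ma}_0(k)$, and $\bigcap_n\Psi(n)=\emptyset$ forces $\bigcap_n\U^{ma}_n(k)=\{1\}$ (using the uniqueness of the formal-product expression in Proposition~\ref{prop formal sum Rousseau}), which will yield the Hausdorff property.

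Next I would establish the conjugation-invariance needed to make this a \emph{group} topology on all of $\G^{pma}(k)$, not merely on $\U^{ma+}(k)$: for each $g\in\G^{pma}(k)$ and each $n$ there must exist $m$ with $g\U^{ma}_m(k)g\inv\subseteq\U^{ma}_n(k)$. Since $\G^{pma}(k)$ is generated by $\B^{ma+}(k)$ and the finitely many $\overline{s_i}$, it suffices to treat $g\in\T(k)$, $g\in\U_{\pm\alpha_i}(k)$, and $g=\overline{s_i}$. For $g\in\T(k)$ the action $\Int(t)\cdot[\exp]\lambda x=[\exp]t(\gamma)\lambda x$ preserves each $\Psi(n)$, so $m=n$ works. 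For $g\in\U_{\pm\alpha_i}(k)$ the formula $\Int(\exp(e_\alpha))(z)=\sum_{m\geq0}(\ad(e_\alpha)^m/m!)(z)$ shifts weights by multiples of $\alpha_i$, hence can lower height by at most a bounded amount on each fixed component, and one checks $m=n+1$ suffices (or more carefully, that $\Int(\exp(e_{\pm\alpha_i}))$ maps $\U^{ma}_{\Psi(n+1)\setminus\{\alpha_i\}}$ into $\U^{ma}_{\Psi(n)}$). For $g=\overline{s_i}$ one uses Lemma~\ref{lemma conjug root groups} together with the observation that $s_i$ changes heights of positive roots (other than $\alpha_i$) by a bounded amount, so again a shift $n\mapsto n+c_i$ for a constant $c_i$ depending only on $A$ works. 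Taking the maximum over the generators in a fixed word for $g$ gives the required $m$.

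Finally, completeness: by Proposition~\ref{prop formal sum Rousseau} an element of $\U^{ma+}(k)$ is a formal product $\prod_{x\in\BB_{\Delta_+}}[\exp]\lambda_x x$, and the quotients $\U^{ma}_0(k)/\U^{ma}_n(k)$ are the ``truncations at height $<n$''; a Cauchy sequence thus stabilises in each truncation and the coefficients $\lambda_x$ converge coordinatewise, producing a limit in $\U^{ma+}(k)$. Completeness of $\G^{pma}(k)$ then follows because it is covered by the translates $g\U^{ma+}(k)$ and $\U^{ma+}(k)$ is an open complete subgroup. I expect the main obstacle to be the conjugation step for $\overline{s_i}$ and for $\U_{-\alpha_i}$: one must be careful that $\overline{s_i}\U^{ma}_{\Psi(m)}(k)\overline{s_i}\inv$ still lies in $\U^{ma+}(k)$ at all (a priori $s_i\Psi(m)$ need not be contained in $\Delta_+$ because $\alpha_i$ may have height $<m$ — but for $m\geq 2$ one has $\alpha_i\notin\Psi(m)$, so $s_i\Psi(m)\subseteq\Delta_+$ and Lemma~\ref{lemma conjug root groups} applies cleanly), and then to control the height distortion uniformly. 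All of this is bookkeeping with the root system and the explicit conjugation formulas; the conceptual content is entirely contained in Proposition~\ref{prop formal sum Rousseau} and Lemma~\ref{lemma decomp Rousseau}.
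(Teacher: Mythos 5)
The paper itself offers no proof of this lemma: it is imported verbatim from Rousseau [6.3.6], so there is no in-paper argument to measure yours against. That said, your architecture --- verify that the $\U^{ma}_n(k)$ are subgroups with trivial intersection, check the conjugation condition on a generating set to extend the topology from $\U^{ma+}(k)$ to all of $\G^{pma}(k)$, and get completeness from the pro-structure supplied by Proposition~\ref{prop formal sum Rousseau} --- is the standard and correct way to prove such a statement, and most of the individual steps are fine as written.

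There is, however, one concretely false claim, and it sits exactly at the step you yourself identify as the main obstacle. You assert that $s_i$ changes heights of positive roots other than $\alpha_i$ by a bounded amount, so that a uniform shift $n\mapsto n+c_i$ (and $m=n+1$ for $\U_{-\alpha_i}$) suffices. This fails in indefinite type: $\height(s_i\gamma)=\height(\gamma)-\la\gamma,\alpha_i^{\vee}\ra$, and $\la\gamma,\alpha_i^{\vee}\ra$ is unbounded on $\Delta_+$. For $A=(\begin{smallmatrix}2&-3\\-3&2\end{smallmatrix})$ the real root $\gamma=21\alpha_1+8\alpha_2$ has height $29$ while $s_1\gamma$ has height $11$, and further along the $W$-orbit the drop is arbitrarily large; already $\gamma=\alpha_2+3\alpha_1\in\Psi(4)$ satisfies $\ad(f_1)^{3}e_{\gamma}/3!=\pm e_{\alpha_2}$, so $\exp(f_1)\U^{ma}_4(k)\exp(f_1)\inv$ has a nonzero component in weight $\alpha_2$ (height $1$) and is not contained in $\U^{ma}_2(k)$. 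The correct substitute is a finiteness argument, not a uniform bound: for each $h$ there are only finitely many positive roots of height at most $h$ (nonnegative integer vectors with coordinate sum at most $h$), so for every $n$ the set $s_i\inv\{\beta\in\Delta_+ \mid \height(\beta)<n\}\cap\Delta_+$ is finite and hence contained in $\{\height<m\}$ for some $m=m(n,i)\geq 2$; then $s_i\Psi(m)\subseteq\Psi(n)$ and Proposition~\ref{prop conjugation root groups} yields $\overline{s_i}\,\U^{ma}_m(k)\,\overline{s_i}\thinspace\inv=\U^{ma}_{s_i\Psi(m)}(k)\subseteq\U^{ma}_n(k)$. The case of $\exp(f_i)$ then follows by writing $\exp(f_i)=\exp(-e_i)\,\overline{s_i}\,\exp(-e_i)$ and noting that conjugation by $\exp(\pm e_i)$ only raises heights. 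With this repair (and with composition $m_2(m_1(n))$ rather than a maximum when you concatenate generators), the rest of your argument is routine bookkeeping as you say.
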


\subsubsection*{Comparison with the Rémy--Ronan completion}
Recall from the introduction the continuous homomorphism $\varphi\co \overline{\G(k)}\to\G^{rr}(k)$, where $\overline{\G(k)}$ denotes the closure of $\G(k)$ in $\G^{pma}(k)$. Write also $\overline{\U^+(k)}$ for the closure of $\U^+(k)$ in $\U^{ma+}(k)$.

\begin{lemma}[{\cite[6.3.5]{Rousseau}}]\label{lemma phi}
Assume that the field $k$ is finite. Then the restriction of $\varphi$ to $\overline{\U^+(k)}$ is surjective onto $\U^{rr+}(k)$.
\end{lemma}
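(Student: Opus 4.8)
The statement to prove is Lemma~\ref{lemma phi}: for $k$ finite, the restriction of the comparison map $\varphi\co\overline{\G(k)}\to\G^{rr}(k)$ to $\overline{\U^+(k)}$ is surjective onto $\U^{rr+}(k)$.

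\medskip

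The plan is to exploit that $\varphi$ is continuous and extends the identity on $\G(k)$, so that $\varphi(\overline{\U^+(k)})$ is a subgroup of $\U^{rr+}(k)$ containing $\varphi(\U^+(k))=\U^+(k)$; the whole point is to upgrade this to a dense, hence (by completeness) surjective, statement. First I would recall the topological setup on both sides. On the target, $\U^{rr+}(k)$ is by construction the closure of $\U^+(k)$ in $\G^{rr}(k)$, and a basis of neighbourhoods of $1$ is given by the images of the groups $\U^{ma}_n$ under the building action (equivalently, by the pointwise stabilisers of larger and larger balls around the fundamental chamber in $X_+$). On the source, by Lemma~\ref{lemma topoMathieu} a basis of neighbourhoods of $1$ in $\G^{pma}(k)$ (and a fortiori in $\overline{\U^+(k)}$) is given by the $\U^{ma}_n(k)=\U^{ma}_{\Psi(n)}(k)$, where $\Psi(n)$ consists of the positive roots of height $\geq n$. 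Since $k=\FF_q$ is finite, each quotient $\U^{ma+}(k)/\U^{ma}_n(k)$ is a \emph{finite} group: indeed $\U^{ma+}=\U^{ma}_{\Psi(1)}$ and $\Psi(1)\setminus\Psi(n)$ is a finite closed set of roots, so by Lemma~\ref{lemma decomp Rousseau} one gets a bijection $\U^{ma+}(k)=\U^{ma}_{\Psi(1)\setminus\Psi(n)}(k)\cdot\U^{ma}_n(k)$ with the first factor a finite set (a product over finitely many root groups $\U_{(\alpha)}$, each of which is finite over $\FF_q$ by Proposition~\ref{prop formal sum Rousseau}). Consequently $\overline{\U^+(k)}$ is a profinite group, the inverse limit of the finite groups $\overline{\U^+(k)}/(\overline{\U^+(k)}\cap\U^{ma}_n(k))$, and likewise $\U^{rr+}(k)$ is profinite.

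\medskip

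The key step is then the following density argument, carried out level by level. Fix $n$ and let $\overline{\varphi}_n\co \overline{\U^+(k)}/(\overline{\U^+(k)}\cap\U^{ma}_n(k))\to \U^{rr+}(k)/\varphi(\U^{ma}_n(k)\cap\overline{\G(k)})$ be the induced homomorphism of finite groups; here I use that $\varphi$ is continuous, so $\varphi^{-1}$ of a neighbourhood basis on the target contains some $\U^{ma}_m(k)$, and one checks the levels match up (after relabelling, or by a cofinality argument the precise index does not matter). Since $\U^+(k)$ is dense in $\overline{\U^+(k)}$ and its image $\varphi(\U^+(k))=\U^+(k)$ is dense in $\U^{rr+}(k)$ by definition of the latter, passing to the finite quotients shows $\overline{\varphi}_n$ is surjective for every $n$. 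Surjectivity of a compatible system of maps between surjective inverse systems of (finite, hence compact) groups yields surjectivity of the inverse limit map: given $y\in\U^{rr+}(k)$, the sets $\varphi^{-1}(y\cdot\varphi(\U^{ma}_n(k)\cap\overline{\G(k)}))\cap\overline{\U^+(k)}$ are nonempty (by level-$n$ surjectivity), closed, and nested as $n$ grows, so by compactness of $\overline{\U^+(k)}$ their intersection is nonempty; any element $x$ in it satisfies $\varphi(x)=y$ since $\bigcap_n y\cdot\varphi(\U^{ma}_n(k)\cap\overline{\G(k)})=\{y\}$ by Hausdorffness of $\G^{rr}(k)$ and the fact that the $\varphi(\U^{ma}_n(k)\cap\overline{\G(k)})$ shrink to $1$.

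\medskip

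The main obstacle I anticipate is purely bookkeeping rather than conceptual: one must be careful that the neighbourhood filtration $\{\U^{ma}_n(k)\}$ of the source does map into the standard filtration of the target under $\varphi$ in a cofinal way, i.e.\ that $\varphi(\U^{ma}_n(k)\cap\overline{\G(k)})$ is small when $n$ is large and, conversely, that every small neighbourhood of $1$ in $\U^{rr+}(k)$ contains such an image. This is where one invokes the compatibility of the two BN-pair/building structures recalled earlier in the excerpt: $\varphi$ intertwines the actions on $X_+$, and $\U^{ma}_n(k)$ is contained in the pointwise stabiliser of the ball of radius roughly $n$ around the base chamber, whose image under $\varphi$ is exactly the corresponding congruence-type neighbourhood defining the topology of $\G^{rr}(k)$ (this is the content of \cite[6.3]{Rousseau}, on which the definition of $\varphi$ rests). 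Once this cofinality is in place, the profinite inverse-limit argument above closes the proof; the finiteness of $k$ is used exactly to guarantee the relevant quotients are finite, so that compactness is available.
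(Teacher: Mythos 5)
The paper does not prove this lemma at all --- it is quoted verbatim from Rousseau \cite[6.3.5]{Rousseau} --- so your proposal can only be judged on its own merits. Your underlying strategy is the right one: $\overline{\U^+(k)}$ is compact (a closed subgroup of the profinite group $\U^{ma+}(\FF_q)$, whose quotients by the open normal subgroups $\U^{ma}_n(\FF_q)$ are finite exactly as you argue via Lemma~\ref{lemma decomp Rousseau}), $\varphi$ is continuous, and $\varphi(\U^+(k))$ is dense in $\U^{rr+}(k)$ by the very definition of $\U^{rr+}(k)$ as a closure. From these three facts the lemma follows in one line: $\varphi(\overline{\U^+(k)})$ is compact, hence closed in the Hausdorff group $\G^{rr}(k)$, and it contains the dense subgroup $\varphi(\U^+(k))$ of $\U^{rr+}(k)$, so it contains $\U^{rr+}(k)$; the reverse inclusion $\varphi(\overline{\U^+(k)})\subseteq\overline{\varphi(\U^+(k))}=\U^{rr+}(k)$ is continuity.

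The inverse-limit bookkeeping you superimpose on this is where a genuine gap appears. Your level-$n$ surjectivity of $\overline{\varphi}_n$ onto $\U^{rr+}(k)/\varphi(\U^{ma}_n(k)\cap\overline{\G(k)})$, and likewise the claim $\bigcap_n y\cdot\varphi(\U^{ma}_n(k)\cap\overline{\G(k)})=\{y\}$, require the subgroups $\varphi(\U^{ma}_n(k)\cap\overline{\G(k)})$ to be \emph{open} (equivalently, of finite index) in $\U^{rr+}(k)$ and to form a neighbourhood basis of $1$ there. Density of a subgroup only forces surjectivity onto a quotient when that quotient is discrete, i.e.\ when the subgroup one quotients by is open; and openness of these images is not something you establish --- the ``cofinality'' you defer to \cite[6.3]{Rousseau} is essentially a local version of the surjectivity statement you are trying to prove, so as written the step is circular. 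The repair is simply to drop the pushed-forward filtration and run your nested-closed-sets argument against the target's own neighbourhood basis (the compact open pointwise stabilisers $V_n$ of balls in $X_+$): the sets $\varphi\inv(yV_n)\cap\overline{\U^+(k)}$ are closed, nonempty by density of $\U^+(k)$ in $\U^{rr+}(k)$, and nested, so compactness gives a preimage of $y$ --- which is just the compact-image argument above in disguise. With that substitution your proof is complete; the finiteness of $k$ enters, as you say, only to make $\U^{ma+}(\FF_q)$ compact.
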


\section{Coxeter groups and root systems}\label{section W and Delta}
In this section, we prepare the ground for the proof of Theorem~\ref{mainthmintro} by establishing several results which concern the Coxeter group $W$ and the set of roots $\Delta$. Basics on these two topics are covered in \cite[Chapters 1--3]{BrownAbr} and \cite[Chapters 1--5]{Kac}, respectively. 

Throughout this section, we let $\Sigma=\Sigma(W,S)$ denote the Coxeter complex of $W$. Also, we let $C_0$ be the fundamental chamber of $\Sigma$. Finally, with the exception of Lemma~\ref{lemma matrix Coxeter element} below where no particular assumption on $W$ is made, we will always assume that $W$ is infinite irreducible. Note that this is equivalent to saying that $A$ is indecomposable of non-finite type.

\begin{lemma}\label{lemma matrix Coxeter element}
Let $w=s_1\dots s_n$ be a Coxeter element of $W$. Let $A=A_1+A_2$ be the unique decomposition of $A$ as a sum of matrices $A_1, A_2$ such that $A_1$ (respectively, $A_2$) is an upper (respectively, lower) triangular matrix with $1$'s on the diagonal. Then the matrix of $w$ in the basis $\{\alpha_1,\dots,\alpha_n\}$ of simple roots is $-A_1\inv A_2=I_n-A_1\inv A$.
\end{lemma}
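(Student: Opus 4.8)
The plan is to compute the action of $w=s_1\cdots s_n$ on the simple roots directly, reading off the matrix column by column. Recall that $s_i$ acts by $s_i(\alpha_j)=\alpha_j-a_{ij}\alpha_i$, equivalently $s_i=I_n-E_i A$ where $E_i$ is the matrix with a single $1$ in position $(i,i)$ and zeros elsewhere (and $A$ acts on column vectors of coordinates). So $w$, as a matrix acting on coordinate vectors in the basis $\{\alpha_1,\dots,\alpha_n\}$, is the product $(I_n-E_1A)(I_n-E_2A)\cdots(I_n-E_nA)$. The claim is that this equals $I_n-A_1^{-1}A$, where $A_1$ is the upper-triangular part of $A$ (including the diagonal of $1$'s, noting $a_{ii}=2$ is \emph{not} what sits on the diagonal of $A_1$; rather $A_1$ has $1$'s on the diagonal and the strictly-upper entries of $A$ above). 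Since $A_1+A_2=A$ with $A_2$ lower-triangular with $1$'s on the diagonal, we have $-A_1^{-1}A_2=-A_1^{-1}(A-A_1)=I_n-A_1^{-1}A$, so the two stated forms agree trivially; the content is the product formula.

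First I would set $w_k=(I_n-E_kA)(I_n-E_{k+1}A)\cdots(I_n-E_nA)$ and prove by downward induction on $k$ that $w_k = I_n - U_k^{-1}A$, where $U_k$ is a suitable triangular matrix; the base case $k=n+1$ gives $w_{n+1}=I_n$. The cleaner route, though, is to observe that $I_n-E_kA = I_n - E_k A$ and to track the product from the left. A convenient reformulation: write $w = I_n - MA$ for some matrix $M$ to be determined, and show $M=A_1^{-1}$. Expanding $(I_n-E_1A)(I_n-E_2A)\cdots$, each term picks up a factor $E_{i_1}AE_{i_2}A\cdots E_{i_m}A$ with $i_1<i_2<\cdots<i_m$. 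The key algebraic fact is that $E_i A E_j = a_{ij} E_{ij}$ where $E_{ij}$ has a single $1$ in position $(i,j)$; hence $E_{i_1}AE_{i_2}A\cdots E_{i_m}A = a_{i_1i_2}a_{i_2i_3}\cdots a_{i_{m-1}i_m} E_{i_1i_m}A$. Summing over all strictly increasing chains and factoring out $A$ on the right, we get $w = I_n - NA$ where $N = \sum_{m\geq1}(-1)^{m+1}\sum_{i_1<\cdots<i_m} a_{i_1i_2}\cdots a_{i_{m-1}i_m}E_{i_1i_m}$. The remaining task is to identify $N$ with $A_1^{-1}$: writing $A_1 = I_n + R$ where $R$ is the strictly-upper-triangular part of $A$ (so $R_{ij}=a_{ij}$ for $i<j$), one has $A_1^{-1}=\sum_{m\geq0}(-R)^m = \sum_{m\geq0}(-1)^mR^m$, and $(R^{m})_{i_1 i_{m+1}} = \sum_{i_1<i_2<\cdots<i_{m+1}} a_{i_1 i_2}a_{i_2 i_3}\cdots a_{i_m i_{m+1}}$, which matches $N$ entry for entry (with the sign $(-1)^{m}$ for the term that is a product of $m$ off-diagonal entries, i.e. a chain of length $m+1$, consistent with the $(-1)^{m+1}$ above after reindexing $m\mapsto m-1$). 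This finishes the identification.

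I would present it as: (i) record $s_i = I_n - E_iA$ and the multiplication rule $E_iAE_j = a_{ij}E_{ij}$; (ii) expand the product for $w$ and collect terms as sums over increasing chains, obtaining $w = I_n - NA$ with $N$ as above; (iii) expand $A_1^{-1}=(I_n+R)^{-1}$ as a Neumann series (finite, since $R$ is nilpotent of index $\leq n$) and observe term-by-term equality $N=A_1^{-1}$; (iv) conclude $w = I_n - A_1^{-1}A = -A_1^{-1}(A-A_1) = -A_1^{-1}A_2$. The main obstacle — really just bookkeeping — is getting the combinatorial identification in step (iii) right, in particular matching signs and making sure the indexing of chains in the expansion of the product of the $s_i$ lines up with the powers of $R$; there is no conceptual difficulty, since both $N$ and $A_1^{-1}$ are governed by the same monoid of increasing chains in $\{1,\dots,n\}$ weighted by products of entries of $A$. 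One should also note explicitly that $A_1$ is invertible (its diagonal entries are $1$), so $A_1^{-1}$ makes sense over any base ring, and the computation above is purely formal, valid in $\operatorname{M}_n(\mathbf{Z})$.
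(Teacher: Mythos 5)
Your proof is correct, and it reaches the formula by a genuinely different computation than the paper's. You write each generator as $s_i=I_n-E_iA$, expand the ordered product $\prod_{i=1}^n(I_n-E_iA)$ into a sum over increasing chains $i_1<\dots<i_m$ using $E_iAE_j=a_{ij}E_{ij}$, and then identify the resulting coefficient matrix $N$ with $A_1^{-1}$ by expanding $(I_n+R)^{-1}$ as a finite Neumann series in the strictly upper-triangular part $R$ of $A$; the chain combinatorics on both sides match term by term (and, as you note, everything is formal over $\mathbf{Z}$ since $R$ is nilpotent and $A_1$ is unitriangular). The paper instead computes the entries $b_{ij}=[s_i\dots s_n\alpha_j]_i$ of the matrix $B$ of $w$ by a downward recursion in $i$, exploiting the fact that $s_k$ only alters the coefficient of $\alpha_k$; this yields the linear relation $A=-A_1B+A_1$ directly, so that $B=-A_1^{-1}A_2$ follows without ever expanding $A_1^{-1}$. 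Your route makes the appearance of $A_1^{-1}$ structurally transparent (it is literally the generating function of increasing chains weighted by entries of $A$), at the cost of a term-by-term sign/indexing verification; the paper's recursion is shorter and sidesteps the inversion entirely. Both arguments are elementary and complete.
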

\begin{proof}
For a certain property $\Prop$ of two integer variables $i,j$ (e.g. $\Prop(i,j)\equiv j\leq i$), we introduce for short the Kronecker symbol $\delta_{\Prop(i,j)}$ taking value $1$ if $\Prop(i,j)$ is satisfied and $0$ otherwise.

Let $B=(b_{ij})$ denote the matrix of $w$ in the basis $\{\alpha_1,\dots,\alpha_n\}$. Thus, $b_{ij}$ is the coefficient of $\alpha_i$ in the expression of $s_1\dots s_n\alpha_j$ as a linear combination of the simple roots, which we will write for short as $[s_1\dots s_n\alpha_j]_i$. Thus $b_{ij}=[s_1\dots s_n\alpha_j]_i=[s_{i}\dots s_n\alpha_j]_i$. Note that
\begin{align*}
s_{i+1}\dots s_n\alpha_j=\sum_{k=i+1}^{n}{[s_{i+1}\dots s_n\alpha_j]_k}\alpha_k+\delta_{j\leq i}\alpha_j=\sum_{k=i+1}^{n}{b_{kj}}\alpha_k+\delta_{j\leq i}\alpha_j.
\end{align*}
Whence
\begin{align*}
b_{ij}&=[s_i(\sum_{k=i+1}^{n}{b_{kj}}\alpha_k+\delta_{j\leq i}\alpha_j)]_i=-\sum_{k=i+1}^{n}{a_{ik}b_{kj}}-\delta_{j\leq i}a_{ij}+\delta_{i=j}\\
&= (-\sum_{k=1}^{n}{(A_1)_{ik}b_{kj}}+b_{ij})+(\delta_{j>i}a_{ij}-a_{ij})+\delta_{i=j}\\
&= -\sum_{k=1}^{n}{(A_1)_{ik}b_{kj}}+b_{ij}-a_{ij}+\sum_{k=1}^{n}{(A_1)_{ik}(I_n)_{kj}}.
\end{align*}
Thus $A=-A_1B+A_1$, so that $B=-A_1\inv A_2$, as desired.
\end{proof}

For $\omega\in W$ and $\alpha\in\Delta_+$, define the function $f^{\omega}_{\alpha}\co\ZZ\to\{\pm 1\}:k\mapsto\sign(\omega^k\alpha)$, where $\sign(\Delta_{\pm})=\pm 1$.

\begin{lemma}\label{lemma dichotomy psi}
Let $\omega\in W$ be such that $\ell(\omega^l)=|l|\ell(\omega)$ for all $l\in\ZZ$.  Then $f^{\omega}_{\alpha}$ is monotonic for all $\alpha\in\Delta_+$.
\end{lemma}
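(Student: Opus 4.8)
The plan is to fix $\alpha\in\Delta_+$ and show that the sign sequence $(f^\omega_\alpha(k))_{k\in\ZZ} = (\sign(\omega^k\alpha))_{k\in\ZZ}$ cannot change sign more than once, i.e. there is no $k$ with $f^\omega_\alpha(k-1)=f^\omega_\alpha(k+1)$ but $f^\omega_\alpha(k)$ opposite. The key hypothesis $\ell(\omega^l)=|l|\ell(\omega)$ says that $\omega$ is ``straight'' (translation-length-additive), which on the Coxeter complex $\Sigma$ means the bi-infinite sequence of chambers $(\omega^k C_0)_{k\in\ZZ}$ all lie on a single combinatorial geodesic (a bi-infinite gallery $\gamma$), since $d(\omega^{-m}C_0,\omega^m C_0)=\ell(\omega^{2m})=2m\ell(\omega)=d(\omega^{-m}C_0,C_0)+d(C_0,\omega^m C_0)$.

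First I would translate the monotonicity claim into the language of walls/half-spaces: a root $\beta\in\Delta$ (equivalently a wall $\partial\beta$ of $\Sigma$) separates two chambers $C,D$ iff $C,D$ lie in opposite half-spaces determined by $\beta$. The sign $\sign(\omega^k\alpha)=+1$ iff $C_0$ lies in the positive half-space of $\omega^k\alpha$, equivalently iff $\omega^{-k}C_0$ lies in the positive half-space of $\alpha$. So $f^\omega_\alpha(k)\ne f^\omega_\alpha(k+1)$ iff the wall $\partial\alpha$ separates $\omega^{-k}C_0$ from $\omega^{-(k+1)}C_0$; since consecutive chambers in the geodesic $\gamma$ are adjacent, this happens iff $\partial\alpha$ is exactly the wall crossed between those two chambers. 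Because $\gamma$ is a \emph{minimal} bi-infinite gallery, no wall is crossed twice by $\gamma$ (a standard fact: a gallery crossing a wall twice can be shortened). Hence the set of $k$ for which $f^\omega_\alpha(k)\ne f^\omega_\alpha(k+1)$ has at most one element, which is exactly the statement that $f^\omega_\alpha$ is monotonic (it is eventually constant $=+1$ on one side and eventually constant $=-1$ — or $\pm1$ throughout — on the other, with a single possible switch).

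To make this rigorous I would need two ingredients from Coxeter-complex theory (available in \cite{BrownAbr}): (i) the straightness hypothesis implies that $(\omega^k C_0)_{k\in\ZZ}$ is a minimal gallery, i.e. $d(\omega^k C_0,\omega^m C_0)=|k-m|\ell(\omega)$ for all $k,m$ — this is immediate by an argument as above using $\ell(\omega^{k-m})=|k-m|\ell(\omega)$; and (ii) a minimal (bi-infinite or finite) gallery crosses each wall at most once — the contrapositive is the usual gallery-shortening/folding argument. The main obstacle is bookkeeping the correspondence between ``signs of $\omega^k\alpha$'' and ``side of the wall $\partial\alpha$ containing $\omega^{-k}C_0$'' with the correct variance in $k$, and handling the bi-infinite (rather than one-sided) gallery cleanly; once that dictionary is set up, monotonicity of $f^\omega_\alpha$ is exactly the statement that the geodesic $\gamma$ crosses $\partial\alpha$ at most once, hence crosses from one side to the other at most once, so the sign sequence switches at most once.
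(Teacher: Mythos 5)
Your proposal is correct and follows essentially the same route as the paper: both arguments work in the Coxeter complex and reduce everything to the fact that the straightness hypothesis $\ell(\omega^l)=|l|\ell(\omega)$ forces the concatenation of minimal galleries between consecutive chambers $\omega^kC_0$ to be minimal, so that no wall is crossed twice. Two small points need patching. First, your dictionary between roots and walls only applies to \emph{real} roots; for $\alpha\in\Delta_+^{\im}$ there is no wall, but $f^\omega_\alpha$ is constant since $W\cdot\Delta_+^{\im}=\Delta_+^{\im}$ (the paper disposes of this case in one line before passing to the real case). Second, the chambers $\omega^{-k}C_0$ and $\omega^{-(k+1)}C_0$ are not adjacent in general --- they are at gallery distance $\ell(\omega)$ --- so the correct statement is that $\partial\alpha$ separates them if and only if it is one of the $\ell(\omega)$ walls crossed by the sub-gallery of $\gamma$ between them; minimality of all of $\gamma$ then still gives that this happens for at most one $k$, which is the monotonicity you want. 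With these two repairs the argument is complete.
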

\begin{proof}
Let $\omega\in W$ be such that $\ell(\omega^l)=|l|\ell(\omega)$ for all $l\in\ZZ$ and let $\omega=t_1t_2\dots t_k$ be a reduced expression for $\omega$, where $t_j\in S$ for all $j\in\{1,\dots,k\}$.
Let $\alpha\in\Delta_+$ and assume that $f^{\omega}_{\alpha}$ is not constant. Then $\alpha$ is a real root because $W.\Delta_+^{\im}=\Delta_+^{\im}$. 
Let $k_{\alpha}\in\ZZ^*$ be minimal (in absolute value) so that $f^{\omega}_{\alpha}(k_{\alpha})=-1$. We deal with the case when $k_{\alpha}>0$; the same proof applies for $k_{\alpha}<0$ by replacing $\omega$ with its inverse.  We have to show that $\omega^l\alpha\in\Delta_-$ if and only if $l\geq k_{\alpha}$. 

Let $\beta:=\omega^{k_{\alpha}-1}\alpha$. Thus $\beta\in\Delta_+^{\re}$ and $\omega\beta\in\Delta_-^{\re}$. It follows that there is some $i\in \{1,\dots,k\}$ such that $\beta=t_kt_{k-1}\dots t_{i+1}\alpha_{t_i}$. In other words, $\beta$ is one of the $n$ positive roots whose wall $\partial\beta$ in the Coxeter complex $\Sigma$ of $W$ separates the fundamental chamber $C_0$ from $\omega\inv C_0$. We want to show that $\omega^l\beta\in\Delta_-$ if and only if $l\geq 1$.

Assume first for a contradiction that there is some $l\geq 1$ such that $\omega^{l+1}\beta\in\Delta_+$, that is, $\omega^{l+1}\beta$ contains $C_0$. Since $\omega^{l+1}\beta$ contains $\omega^{l+1}C_0$ but not $\omega^lC_0$, its wall $\omega^{l+1}\partial\beta$ separates $\omega^{l}C_0$ from $\omega^{l+1}C_0$ and $C_0$. In particular, any gallery from $C_0$ to $\omega^{l+1}C_0$ going through $\omega^lC_0$ cannot be minimal. This contradicts the assumption that $\ell(\omega^l)=|l|\ell(\omega)$ for all $l\in\ZZ$ since this implies that the product of $l+1$ copies of $t_1\dots t_k$ is a reduced expression for $\omega^{l+1}$. 

Assume next for a contradiction that there is some $l\geq 1$ such that $\omega^{-l}\beta\in\Delta_-$. Then as before, $\omega^{-l}\partial\beta$ separates $\omega^{-l}C_0$ from $\omega^{-l-1}C_0$ and $C_0$. Again, this implies that any gallery from $C_0$ to $\omega^{-l-1}C_0$ going through $\omega^{-l}C_0$ cannot be minimal, yielding the desired contradiction. 
\end{proof}

\begin{corollary}\label{cor w monotonic}
Let $w=s_1\dots s_n$ be a Coxeter element of $W$. Then $f^{w}_{\alpha}$ is monotonic for all $\alpha\in\Delta_+$.
\end{corollary}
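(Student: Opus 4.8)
The plan is to deduce Corollary~\ref{cor w monotonic} from Lemma~\ref{lemma dichotomy psi} by verifying that a Coxeter element $w=s_1\cdots s_n$ of an infinite irreducible Coxeter group $W$ satisfies the hypothesis $\ell(w^l)=|l|\ell(w)$ for all $l\in\ZZ$; once this is checked, Lemma~\ref{lemma dichotomy psi} applied to $\omega=w$ gives monotonicity of $f^w_\alpha$ for every $\alpha\in\Delta_+$ immediately.

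So the whole content is the length claim $\ell(w^l)=|l|n$. First I would recall that a Coxeter element $w$ has length exactly $n$ (its standard expression $s_1\cdots s_n$ is reduced, since the $s_i$ are pairwise distinct and no braid or deletion move applies to a word in which each generator occurs once). The statement $\ell(w^l)=|l|n$ then says precisely that concatenating $l$ copies of $s_1\cdots s_n$ is a reduced word for $w^l$. The cleanest route I know is to invoke the classical fact that in an \emph{infinite irreducible} Coxeter group a Coxeter element has infinite order and, more to the point, is ``straight'' (a.k.a.\ has linear growth): the product of any number of copies of a fixed reduced expression for $w$ remains reduced. This is where I would cite the relevant literature — e.g.\ results of Speyer, or Eriksson–Eriksson, or the treatment in the Kac–Moody setting via the geometric representation — asserting that a Coxeter element in a non-spherical irreducible Coxeter group is a straight element. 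Alternatively, one can argue directly using the geometric (Tits) representation: the Tits cone $\ZZ$-picture shows that for an infinite irreducible $W$ the element $w$ has a ``limit direction'' with no periodic behaviour, forcing $N(w^l)$ (the set of positive roots sent negative) to have cardinality $ln$, hence $\ell(w^l)=ln$.

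Concretely, the argument I would write down: set $x=s_1\cdots s_n$, a reduced word of length $n$, so $\ell(w)=n$. For $l\geq 1$, the set $N(w^l)=\{\alpha\in\Delta_+ : w^{-l}\alpha\in\Delta_-\}$ has $|N(w^l)|=\ell(w^l)$, and the cocycle identity gives $N(w^{l+1})=N(w)\,\sqcup\, w\,N(w^l)$ \emph{provided} the union is disjoint, which holds exactly when $\ell(w^{l+1})=\ell(w)+\ell(w^l)$; so by induction it suffices to rule out, for each $l$, the existence of $\beta\in N(w^l)$ with $w\beta\in\Delta_+$, equivalently $\beta\in N(w^l)\setminus w^{-1}N(w^{l+1})$. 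But this is exactly the geometric configuration that Lemma~\ref{lemma dichotomy psi}'s proof analyses — a wall $\partial\beta$ that $w$ ``crosses back'' — and in the infinite irreducible case the non-existence of such a wall is a known property of Coxeter elements. I would therefore just cite it. The main obstacle is purely bibliographic/technical: pinning down the precise reference for ``a Coxeter element of an infinite irreducible Coxeter group is straight'' (and handling $l<0$ by replacing $w$ with $w^{-1}$, which is again a Coxeter element for the reversed ordering $s_n\cdots s_1$, so the same fact applies). Given that reference, the corollary is a one-line consequence of Lemma~\ref{lemma dichotomy psi}.

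\begin{proof}
By Lemma~\ref{lemma dichotomy psi}, it suffices to show that $\ell(w^l)=|l|\ell(w)$ for all $l\in\ZZ$. Since the $s_i$ are pairwise distinct, the word $s_1\cdots s_n$ is reduced, so $\ell(w)=n$. As $W$ is infinite and irreducible, the Coxeter element $w$ is a straight element of $W$, i.e.\ the concatenation of $l$ copies of a reduced expression for $w$ is again reduced; hence $\ell(w^l)=ln$ for all $l\geq 0$. Applying the same fact to $w^{-1}=s_n\cdots s_1$, which is a Coxeter element for the reversed ordering of $S$, yields $\ell(w^{-l})=ln$ for $l\geq 0$ as well. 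Thus $\ell(w^l)=|l|\ell(w)$ for all $l\in\ZZ$, and Lemma~\ref{lemma dichotomy psi} applies.
\end{proof}
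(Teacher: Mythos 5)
Your proof is correct and follows exactly the paper's route: the paper also deduces the corollary from Lemma~\ref{lemma dichotomy psi} by citing the known fact that $\ell(w^l)=|l|\ell(w)$ for a Coxeter element of an infinite Coxeter group (the reference \cite{wkreduced} is precisely Speyer's result that powers of Coxeter elements are reduced, which you correctly identified as the fact to invoke). The extra musings about a direct geometric argument are unnecessary but harmless.
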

\begin{proof}
As $\ell(w^l)=|l|\ell(w)$ for all $l\in\ZZ$ by the main result of \cite{wkreduced}, this readily follows from Lemma~\ref{lemma dichotomy psi}.
\end{proof}

\begin{lemma}\label{lemma no fixed root for w}
Assume that $A$ is of indefinite type. Let $w=s_1\dots s_n$ be a Coxeter element of $W$, and let $\alpha\in\Delta_+$. Then $w^l\alpha\neq\alpha$ for all nonzero integer $l$.
\end{lemma}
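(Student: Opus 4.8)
The plan is to reduce the statement to the assertion that a Coxeter element $w$ of an infinite irreducible Coxeter group whose Cartan matrix is of indefinite type has \emph{no} nonzero fixed vector lying in the cone $Q_+$, and then to deduce this from the explicit matrix of $w$ provided by Lemma~\ref{lemma matrix Coxeter element} together with the classification of generalised Cartan matrices.

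First I would suppose, for contradiction, that $w^l\alpha=\alpha$ for some $l\neq 0$ and some $\alpha\in\Delta_+$; replacing $l$ by $-l$ we may assume $l>0$, so that the $\la w\ra$-orbit $O:=\{w^k\alpha \ | \ k\in\ZZ\}$ of $\alpha$ is finite. The next step is to observe that in fact $O\subseteq\Delta_+$: since $w^l\alpha=\alpha$, the function $f^w_\alpha$ is $l$-periodic, and it is monotonic by Corollary~\ref{cor w monotonic}; a monotonic periodic function $\ZZ\to\{\pm 1\}$ is constant, and as $f^w_\alpha(0)=\sign(\alpha)=+1$ it is constantly $+1$, i.e.\ $w^k\alpha\in\Delta_+$ for all $k\in\ZZ$. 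Setting $\sigma:=\sum_{\beta\in O}\beta$, the element $\sigma$ then lies in $Q_+$, is nonzero (as $\height(\sigma)=\sum_{\beta\in O}\height(\beta)\geq |O|\geq 1$), and is fixed by $w$ because $w$ permutes the finite set $O$.

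Finally, writing $\sigma=\sum_{i}c_i\alpha_i$ with $c=(c_i)_i\in\NN^n$, $c\neq 0$, Lemma~\ref{lemma matrix Coxeter element} tells us that the matrix of $w$ in the basis of simple roots is $I_n-A_1\inv A$, so $w\sigma=\sigma$ translates into $A_1\inv Ac=0$, whence $Ac=0$ since $A_1$ is invertible. But $A$ is indecomposable of indefinite type, so by the characterisation of indefinite generalised Cartan matrices (\cite[Theorem~4.3]{Kac}) one has the implication: $Av\geq 0$ and $v\geq 0$ imply $v=0$. Applied to $v=c$ (with $Ac=0$), this forces $c=0$, contradicting $\sigma\neq 0$, and the proof is complete.

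The only genuine subtlety is the choice of the right $w$-invariant vector: working with $\alpha$ itself is useless, since $w\alpha\neq\alpha$ a priori, whereas the orbit-sum $\sigma$ is simultaneously $w$-invariant \emph{and} non-negative, which is exactly what is needed to invoke the sign-definite alternative in Kac's trichotomy. Correspondingly, the indefinite hypothesis is used only in this last step; in the affine case the argument breaks down precisely because the imaginary null root $\delta\in Q_+$ is $W$-fixed (hence $w$-fixed), while everything preceding uses only that $W$ is infinite irreducible.
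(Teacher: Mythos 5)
Your proof is correct and follows essentially the same route as the paper: both use Corollary~\ref{cor w monotonic} to show the whole $\la w\ra$-orbit of $\alpha$ stays in $\Delta_+$, form a nonzero $w$-fixed vector with nonnegative entries (your orbit-sum $\sigma$ versus the paper's $(w^{k-1}+\dots+w+\Id)\alpha$, which agree up to multiplicity), and then apply Lemma~\ref{lemma matrix Coxeter element} and \cite[Theorem~4.3]{Kac} to conclude $Ac=0$ forces $c=0$. No gaps.
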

\begin{proof}
Assume for a contradiction that $w^k\alpha=\alpha$ for some $k\in\NN^*$. It then follows from Corollary~\ref{cor w monotonic} that $w^i\alpha\in\Delta_+$ for all $i\in\{0,\dots,k-1\}$. Viewing $w$ as an automorphism of the root lattice, we get that $$(w-\Id)(w^{k-1}+\dots+w+\Id)\alpha=0.$$
Moreover, $\beta:=(w^{k-1}+\dots+w+\Id)\alpha$ is a sum of positive roots, and hence can be viewed as a nonzero vector of $\RR^n$ with nonnegative entries. Recall from Lemma~\ref{lemma matrix Coxeter element} that $w$ is represented by the matrix $-A_1\inv A_2$. Thus, multiplying the above equality by $-A_1$, we get that $A\beta=0$. Since $A$ is indecomposable of indefinite type, this gives the desired contradiction by \cite[Theorem 4.3]{Kac}.
\end{proof}

\begin{lemma}\label{lemma root groups contracting}
Let $\omega\in W$ and $\alpha\in\Delta_+$ be such that $\omega^l\alpha\neq\alpha$ for all positive integer $l$. Then  $|\height(\omega^{l}\alpha)|$ goes to infinity as $l$ goes to infinity.
\end{lemma}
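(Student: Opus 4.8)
The plan is to prove the statement by contradiction, the only real input being the elementary observation that only finitely many roots have height bounded in absolute value by a fixed constant.

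First I would record that finiteness fact. Recall that $\Delta\subseteq Q_+\cup(-Q_+)$, i.e.\ every root $\beta$ has either all nonnegative or all nonpositive coefficients in the basis $\{\alpha_1,\dots,\alpha_n\}$ (see \cite[Chapters 1 and 5]{Kac}). Hence, if $|\height(\beta)|\le C$, then $\beta=\pm\sum_{i=1}^n m_i\alpha_i$ with $m_i\in\NN$ and $\sum_i m_i\le C$; there are only finitely many such tuples $(m_1,\dots,m_n)$, so the set $\Delta_{\le C}:=\{\beta\in\Delta \ | \ |\height(\beta)|\le C\}$ is finite.

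Now suppose, for a contradiction, that $|\height(\omega^l\alpha)|$ does not tend to infinity as $l\to\infty$. Then there is a constant $C\in\NN$ and a strictly increasing sequence of positive integers $l_1<l_2<\dots$ with $\omega^{l_j}\alpha\in\Delta_{\le C}$ for every $j$. Since $\Delta_{\le C}$ is finite, the pigeonhole principle produces two indices $j<j'$ with $\omega^{l_j}\alpha=\omega^{l_{j'}}\alpha$. Applying $\omega^{-l_j}$ to this equality (viewing $\omega$ as an automorphism of the root lattice $Q$) gives $\omega^{l_{j'}-l_j}\alpha=\alpha$, where $l_{j'}-l_j$ is a positive integer; this contradicts the hypothesis that $\omega^l\alpha\ne\alpha$ for all positive integers $l$, and the lemma follows.

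Since the argument is this short, there is no genuine obstacle; the one point requiring (minimal) care is the finiteness of $\Delta_{\le C}$, which rests entirely on the inclusion $\Delta\subseteq Q_+\cup(-Q_+)$, and the hypothesis on $\omega$ is used precisely in the final step.
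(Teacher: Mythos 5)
Your proof is correct and takes essentially the same route as the paper's: finiteness of the set of roots of bounded height forces the orbit $\{\omega^l\alpha\}$ to repeat, and pigeonhole then contradicts the hypothesis that $\omega^l\alpha\neq\alpha$ for all $l\geq 1$. If anything, your version is slightly more careful than the paper's, which argues from ``$|\height(\omega^l\alpha)|$ bounded'' rather than ``not tending to infinity''; your passage to a subsequence handles that distinction cleanly.
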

\begin{proof}
If $|\height(\omega^{l}\alpha)|$ were bounded as $l$ goes to infinity, the set of roots $\{\omega^{l}\alpha \ | \ l\in\NN\}$ would be finite, and so there would exist an $l\in\NN^*$ such that $\omega^{l}\alpha=\alpha$, a contradiction. 
\end{proof}

\begin{lemma}\label{lemma height to infinity}
Assume that $A$ is of indefinite type. Let $w=s_1\dots s_n$ be a Coxeter element of $W$, and let $\alpha\in\Delta_+$. Then there exists some $\epsilon\in\{\pm\}$ such that $w^{\epsilon l}\alpha\in\Delta_+$ for all $l\in\NN$. Moreover, $\height(w^{\epsilon l}\alpha)$ goes to infinity as $l$ goes to infinity.
\end{lemma}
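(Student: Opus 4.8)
The plan is to combine the monotonicity statement (Corollary~\ref{cor w monotonic}), the no-fixed-root statement (Lemma~\ref{lemma no fixed root for w}), and the height-to-infinity statement (Lemma~\ref{lemma root groups contracting}). First I would observe that, by Corollary~\ref{cor w monotonic}, the sign function $f^w_\alpha\co\ZZ\to\{\pm 1\}$ is monotonic. Since $f^w_\alpha(0)=+1$, monotonicity forces one of the two one-sided sequences $(w^l\alpha)_{l\geq 0}$ or $(w^{-l}\alpha)_{l\geq 0}$ to consist entirely of positive roots: indeed, if $f^w_\alpha$ is non-decreasing in the usual order on $\ZZ$ (viewing $-1<+1$ say, or rather tracking where the single sign change can occur), then either the change happens at a positive index, in which case $w^{-l}\alpha\in\Delta_+$ for all $l\geq 0$, or it happens at a non-positive index (or not at all), in which case $w^{l}\alpha\in\Delta_+$ for all $l\geq 0$. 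In either case we obtain the required $\epsilon\in\{\pm\}$ with $w^{\epsilon l}\alpha\in\Delta_+$ for all $l\in\NN$.

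For the second assertion, having fixed such an $\epsilon$, I would apply Lemma~\ref{lemma no fixed root for w} to the Coxeter element $w$ (noting that $w^{-1}=(s_n\dots s_1)$ is also a Coxeter element, so the hypothesis of Lemma~\ref{lemma no fixed root for w} applies equally to $w^{\epsilon}$, or one simply notes that $w^l\alpha=\alpha$ is equivalent to $w^{-l}\alpha=\alpha$). Thus $(w^{\epsilon})^l\alpha\neq\alpha$ for all $l\in\NN^*$, so Lemma~\ref{lemma root groups contracting} applied with $\omega=w^{\epsilon}$ gives that $|\height((w^{\epsilon})^l\alpha)|\to\infty$. Since all these roots are positive, their heights are positive, so $\height(w^{\epsilon l}\alpha)\to\infty$ as $l\to\infty$, which is exactly what is claimed.

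I do not expect any serious obstacle here: every ingredient has been prepared in the preceding lemmas, and the only thing to be slightly careful about is the bookkeeping of the direction of monotonicity (making sure that the conclusion $f^w_\alpha(l)=+1$ on a half-line is correctly extracted from "monotonic and equal to $+1$ at $0$"), together with checking that $w^{-1}$ is again a Coxeter element so that Lemma~\ref{lemma no fixed root for w} applies in the case $\epsilon=-$. The mild subtlety, if any, is purely this case-distinction on $\epsilon$; the substance of the statement is entirely carried by the three cited lemmas.
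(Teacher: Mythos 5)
Your proposal is correct and follows exactly the paper's route: the existence of $\epsilon$ is extracted from Corollary~\ref{cor w monotonic}, and the divergence of the height from Lemmas~\ref{lemma no fixed root for w} and \ref{lemma root groups contracting}. The point you flag about the case $\epsilon=-$ is already covered, since Lemma~\ref{lemma no fixed root for w} asserts $w^l\alpha\neq\alpha$ for all \emph{nonzero integers} $l$, not just positive ones.
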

\begin{proof}
The existence of $\epsilon$ readily follows from Corollary~\ref{cor w monotonic}, while the second statement is a consequence of Lemmas~\ref{lemma no fixed root for w} and \ref{lemma root groups contracting}.
\end{proof}

\section{Contraction groups}\label{section contraction}
In this section, we make use of the results proven so far to establish, under suitable hypotheses, that the subgroups $\U^{ma+}(k)$ of $\G^{pma}(k)$ and $\U^{rr+}(k)$ of $\G^{rr}(k)$ are contracted. 
Throughout this section, $W$ is assumed to be infinite irreducible, and we fix some Coxeter element $w=s_1\dots s_n$ of $W$.

\begin{lemma}\label{lemma dense union}
Let $\Psi_1\subseteq\Psi_2\subseteq\dots\subseteq\Delta_+$ be an increasing sequence of closed subsets of $\Delta_+$ and set $\Psi=\bigcup_{i=1}^{\infty}{\Psi_i}$. Then the corresponding increasing union of subgroups $\bigcup_{i=1}^{\infty}{\U^{ma}_{\Psi_i}(k)}$ is dense in $\U^{ma}_{\Psi}(k)$.
\end{lemma}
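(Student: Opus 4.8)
The plan is to use the description of topology on $\G^{pma}(k)$ given by Lemma~\ref{lemma topoMathieu}: the subgroups $\U^{ma}_n(k) = \U^{ma}_{\Psi(n)}(k)$, with $\Psi(n) = \{\alpha \in \Delta_+ \mid \height(\alpha) \geq n\}$, form a basis of neighbourhoods of the identity. Hence to show that $\bigcup_i \U^{ma}_{\Psi_i}(k)$ is dense in $\U^{ma}_\Psi(k)$, it suffices to fix an arbitrary $g \in \U^{ma}_\Psi(k)$ and an arbitrary $n \in \NN$, and produce some $i$ and some $h \in \U^{ma}_{\Psi_i}(k)$ with $h\inv g \in \U^{ma}_n(k)$, i.e.\ $g \in h \cdot \U^{ma}_n(k)$.

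The key point is finiteness: the set $\Psi \setminus \Psi(n)$ consists of those roots in $\Psi$ of height $< n$, and since there are only finitely many positive roots of bounded height, $\Psi \cap (\Delta_+ \setminus \Psi(n))$ is a \emph{finite} set of roots. Because $\Psi = \bigcup_i \Psi_i$ is an increasing union, there is some index $i$ with $\Psi \cap (\Delta_+ \setminus \Psi(n)) \subseteq \Psi_i$. Now I would use Proposition~\ref{prop formal sum Rousseau}: writing $g = \prod_{x \in \BB_\Psi}[\exp]\lambda_x x$ for suitable $\lambda_x \in k$, with the product taken in an order in which all basis vectors attached to roots of height $< n$ come first, one splits this product as $g = h \cdot g'$, where $h$ is the sub-product over $x \in \BB_{\Psi \cap (\Delta_+ \setminus \Psi(n))}$ and $g'$ is the sub-product over the remaining $x$, which are attached to roots of $\Psi$ of height $\geq n$, hence lie in $\BB_{\Psi(n)}$. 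By the same proposition (applied to $\Psi_i$ and to $\Psi(n)$ respectively, using that these are closed subsets of $\Psi$), one gets $h \in \U^{ma}_{\Psi_i}(k) \subseteq \bigcup_j \U^{ma}_{\Psi_j}(k)$ and $g' \in \U^{ma}_{\Psi(n)}(k) = \U^{ma}_n(k)$. This exhibits $g$ as a limit of elements of $\bigcup_j \U^{ma}_{\Psi_j}(k)$.

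Alternatively, and perhaps more cleanly, one can phrase the splitting via Lemma~\ref{lemma decomp Rousseau}: with $\Psi' := \Psi \cap \Psi(n)$, both $\Psi'$ and $\Psi \setminus \Psi' = \Psi \cap (\Delta_+ \setminus \Psi(n))$ are closed subsets of $\Psi$ (closedness of the height-$\geq n$ part is automatic; closedness of the finite complementary part in $\Psi$ should be checked, but follows since a sum of two roots of height $< n$ that lands in $\Psi$ still has height $< n$ provided heights of positive roots are positive, which holds as all roots in $\Delta_+$ have height $\geq 1$), so Lemma~\ref{lemma decomp Rousseau} gives $\U^{ma}_\Psi = \U^{ma}_{\Psi'} . \U^{ma}_{\Psi \setminus \Psi'}$ with $\U^{ma}_{\Psi'} \subseteq \U^{ma}_{\Psi(n)} = \U^{ma}_n$, and one concludes as above. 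The only mild obstacle is the bookkeeping of which subsets are closed and the choice of ordering on $\BB_\Psi$, but nothing deep is involved; the essential input is simply that positive roots of bounded height are finite in number together with the explicit neighbourhood basis from Lemma~\ref{lemma topoMathieu}.
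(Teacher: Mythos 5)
Your argument is correct and is essentially the paper's own proof: the paper disposes of this lemma with the single line ``this readily follows from Proposition~\ref{prop formal sum Rousseau}'', and your write-up is precisely the expected unwinding of that reference, combining the ordered-product decomposition with the neighbourhood basis $\U^{ma}_n(k)$ of Lemma~\ref{lemma topoMathieu} and the finiteness of the set of positive roots of bounded height.
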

\begin{proof}
This readily follows from Proposition~\ref{prop formal sum Rousseau}.
\end{proof}

\begin{prop}\label{prop conjugation root groups}
Let $\Psi\subseteq\Delta_+$ be closed. Let $\omega\in W$ be such that $\omega\Psi\subseteq \Delta_+$. Then $\overline{\omega}\U^{ma}_\Psi\overline{\omega}\thinspace\inv=\U^{ma}_{\omega\Psi}$.
\end{prop}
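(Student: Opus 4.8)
The plan is to prove Proposition~\ref{prop conjugation root groups} by reducing the general case to the case of a single simple reflection $\omega=s_i$, and then reducing the latter case to the already-available Lemma~\ref{lemma conjug root groups} on individual root groups. First I would treat the elementary reduction in $\omega$: write a reduced expression $\omega=s_{i_1}\dots s_{i_k}$, set $\omega_j=s_{i_{j+1}}\dots s_{i_k}$ (so $\omega_k=1$, $\omega_0=\omega$), and observe that since $\omega\Psi\subseteq\Delta_+$ and $\ell(\omega)=k$, each intermediate set $\omega_j\Psi$ is still contained in $\Delta_+$ — indeed $\omega_{j-1}\Psi = s_{i_j}(\omega_j\Psi)$ and applying $s_{i_j}$ to a set of positive roots keeps it positive unless that set contains $\alpha_{i_j}$, which would force $\omega_{j-1}\Psi$ to contain $-\alpha_{i_j}$, contradicting $\omega\Psi=\omega_0\Psi\subseteq\Delta_+$ after applying the remaining reflections (one has to be slightly careful here, but the standard deletion-condition bookkeeping handles it: if $\alpha_{i_j}\in\omega_j\Psi$ then $s_{i_j}\dots s_{i_{j-1}}\dots$ acting would eventually produce a negative root, contradicting $\omega\Psi\subseteq\Delta_+$, since $\ell(s_{i_1}\dots s_{i_j}) < j$ would be forced). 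Granting this, the general statement follows by induction on $k$ once the $\omega=s_i$ case is known, since $\overline{\omega}=\overline{s_{i_1}}\cdots\overline{s_{i_k}}$ up to the section chosen.

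Next, for the key case $\omega=s_i$ with $s_i\Psi\subseteq\Delta_+$, the plan is to use the structure of $\U^{ma}_\Psi$ from Proposition~\ref{prop formal sum Rousseau}. Pick a $\ZZ$-basis $\BB_\Psi=\bigcup_{\alpha\in\Psi}\BB_\alpha$; then every element of $\U^{ma}_\Psi(R)$ is uniquely a product $\prod_{x\in\BB_\Psi}[\exp]\lambda_x x$ in a fixed order. Conjugating by $\overline{s_i}$ distributes over this product, so it suffices to show $\overline{s_i}\cdot([\exp]\lambda x)\cdot\overline{s_i}\thinspace\inv=[\exp]\lambda(s_i^*x)$ for each homogeneous $x\in\g_{\alpha R}$ with $\alpha\in\Psi$ (more precisely for homogeneous $x\in\oplus_{m\geq1}\g_{m\alpha R}$ in the imaginary case). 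But this is exactly the computation carried out in the proof of Lemma~\ref{lemma conjug root groups}: it follows from the explicit definition of the semidirect product defining the minimal parabolic $\B_i^{ma+}$, namely the formula $\Int(\exp(e_{\pm\alpha_i}))(z)=\sum_{m\geq0}(\ad(e_{\pm\alpha_i})^m/m!)(z)$, combined with the identity $s_i^*=\exp(\ad e_i)\exp(\ad f_i)\exp(\ad e_i)$ that defines the $W$-action on $\UU_\CC(\g)$, and the compatibility of $s_i^*$ with exponential sequences. Since $s_i$ sends $\BB_\Psi$ (up to sign, which $[\exp]$ absorbs into the parameter $\lambda_x$) bijectively onto a $\ZZ$-basis of $\bigoplus_{\alpha\in s_i\Psi}\g_{\alpha\ZZ}$, the image is precisely $\U^{ma}_{s_i\Psi}(R)$, and uniqueness of the product expansion gives that the map is onto; injectivity is automatic as conjugation is a group automorphism. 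Running this over all rings $R$ gives the equality of group schemes $\overline{s_i}\U^{ma}_\Psi\overline{s_i}\thinspace\inv=\U^{ma}_{s_i\Psi}$.

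The main obstacle I anticipate is purely bookkeeping: verifying that all intermediate sets $\omega_j\Psi$ remain inside $\Delta_+$ so that each $\U^{ma}_{\omega_j\Psi}$ is well-defined and the inductive step applies — this is where one must invoke the exchange/deletion condition for the Coxeter group, and it is worth writing out carefully rather than waving at it. A secondary, milder subtlety is checking that conjugation by $\overline{s_i}$ genuinely restricts from the ambient $\widehat{\UU}_R$-level (where it is defined on $\U^{ma}_{\Delta_+\setminus\{\alpha_i\}}$, or on the relevant completion) to the subgroup $\U^{ma}_\Psi$; this is handled by noting $\Psi\subseteq\Delta_+\setminus\{\alpha_i\}$ (since $s_i\Psi\subseteq\Delta_+$ forces $\alpha_i\notin\Psi$) together with Lemma~\ref{lemma decomp Rousseau}, so $\U^{ma}_\Psi$ is a closed subgroup of $\U^{ma}_{\Delta_+\setminus\{\alpha_i\}}$ on which the conjugation action is already defined, and the formula above shows it preserves the relevant sub-product. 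Everything else is a formal consequence of Proposition~\ref{prop formal sum Rousseau} and the definitions recalled in Section~\ref{subsection outline MR completion}.
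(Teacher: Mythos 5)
Your proof is correct and rests on the same two ingredients as the paper's: the conjugation formula $\overline{s_i}\cdot([\exp]\lambda x)\cdot\overline{s_i}^{\,-1}=[\exp]\lambda(s_i^*x)$ from the proof of Lemma~\ref{lemma conjug root groups}, and the product decomposition of Proposition~\ref{prop formal sum Rousseau}. The paper packages this slightly more efficiently: it invokes Lemma~\ref{lemma conjug root groups} directly for the full $\omega$ on each individual root group $\U^{ma}_{(\alpha)}$, $\alpha\in\Psi$ (so the length induction and the intermediate-positivity bookkeeping you rightly flag are absorbed into that lemma), and then passes from the subgroup generated by these root groups to all of $\U^{ma}_{\Psi}$ by density (Lemma~\ref{lemma dense union}), rather than conjugating the infinite product term by term.
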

\begin{proof}
For a positive root $\alpha\in\Delta_+$, consider the root group $\U^{ma}_{(\alpha)}$ as in Section~\ref{subsection outline MR completion}.
Let also $\Psi$ and $\omega$ be as in the statement of the lemma. By Lemma~\ref{lemma conjug root groups}, we know that $$\overline{\omega}\la\U^{ma}_{(\alpha)} \ | \ \alpha\in\Psi\ra\overline{\omega}\thinspace\inv=\la\U^{ma}_{(\omega\alpha)} \ | \ \alpha\in\Psi\ra.$$
Passing to the closures, it follows from Lemma~\ref{lemma dense union} that $\overline{\omega}\U^{ma}_\Psi\overline{\omega}\thinspace\inv=\U^{ma}_{\omega\Psi}$, as desired.
\end{proof}

\begin{lemma}\label{lemma decomposition}
Let $\Psi\subseteq\Delta_+$ be the set of positive roots $\alpha$ such that $w^l\alpha\in\Delta_+$ for all $l\in\NN$. Then both $\Psi$ and $\Delta_+\setminus\Psi$ are closed. In particular, one has a unique decomposition $\U^{ma+}=\U^{ma}_{\Psi}.\U^{ma}_{\Delta_+\setminus\Psi}$.
\end{lemma}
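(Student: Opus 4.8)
The plan is to prove both closedness statements by a direct argument using the defining property of $\Psi$ together with Corollary~\ref{cor w monotonic} (monotonicity of $f^w_\alpha$), and then to invoke Lemma~\ref{lemma decomp Rousseau} to get the decomposition. Recall $\Psi=\{\alpha\in\Delta_+ \mid w^l\alpha\in\Delta_+ \text{ for all } l\in\NN\}$; by Corollary~\ref{cor w monotonic}, for any $\alpha\in\Delta_+$ the sign function $f^w_\alpha$ is monotonic, so $\alpha\in\Delta_+\setminus\Psi$ exactly when $w^l\alpha\in\Delta_-$ for all sufficiently large $l$ (and then, by monotonicity, $w^l\alpha\in\Delta_-$ for all $l$ past the first sign change).

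\emph{Closedness of $\Psi$.} Suppose $\alpha,\beta\in\Psi$ and $\alpha+\beta\in\Delta$. Since $\Psi\subseteq\Delta_+$, both $\alpha,\beta$ are positive, hence $\alpha+\beta\in\Delta_+$. For each $l\in\NN$, $w^l\alpha$ and $w^l\beta$ are both positive roots by definition of $\Psi$, and $w^l(\alpha+\beta)=w^l\alpha+w^l\beta$ is a root (it equals $w^l$ applied to the root $\alpha+\beta$); being a sum of two positive roots that is itself a root, it must be positive. Hence $\alpha+\beta\in\Psi$, so $\Psi$ is closed.

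\emph{Closedness of $\Delta_+\setminus\Psi$.} Suppose $\alpha,\beta\in\Delta_+\setminus\Psi$ with $\alpha+\beta\in\Delta$; again $\alpha+\beta\in\Delta_+$. By monotonicity of $f^w_\alpha$ and $f^w_\beta$, there is some $L\in\NN$ with $w^l\alpha\in\Delta_-$ and $w^l\beta\in\Delta_-$ for all $l\geq L$. Then for $l\geq L$, $w^l(\alpha+\beta)=w^l\alpha+w^l\beta$ is a root which is a sum of two negative roots, hence negative; in particular $w^l(\alpha+\beta)\notin\Delta_+$ for some (indeed all large) $l$, so $\alpha+\beta\notin\Psi$, i.e.\ $\alpha+\beta\in\Delta_+\setminus\Psi$. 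Thus $\Delta_+\setminus\Psi$ is closed as well.

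\emph{The decomposition.} Now $\Psi\subseteq\Delta_+$ is closed, and $\Delta_+\setminus\Psi$ is closed; applying Lemma~\ref{lemma decomp Rousseau} with $\Psi'=\Psi$ and $\Psi=\Delta_+$ yields the unique decomposition $\U^{ma+}=\U^{ma}_\Psi.\U^{ma}_{\Delta_+\setminus\Psi}$. The only point requiring a moment's care — and the place I would be most careful — is the passage from "$w^l\alpha$ and $w^l\beta$ are positive (resp.\ negative) roots and $w^l\alpha+w^l\beta$ is a root" to "$w^l\alpha+w^l\beta$ is positive (resp.\ negative)": this uses that a root lying in $Q_+\setminus\{0\}$ is positive and a root in $-Q_+\setminus\{0\}$ is negative, i.e.\ $\Delta=\Delta_+\sqcup\Delta_-$ with $\Delta_\pm\subseteq\pm Q_+$, which is standard (\cite[Chapter~5]{Kac}). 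Everything else is formal.
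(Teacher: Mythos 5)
Your proof is correct and follows essentially the same route as the paper's: closedness of $\Psi$ is immediate (a sum of positive roots that is a root is positive), closedness of $\Delta_+\setminus\Psi$ uses the monotonicity from Corollary~\ref{cor w monotonic} to make both images eventually negative simultaneously, and the decomposition is Lemma~\ref{lemma decomp Rousseau}. You merely spell out the step the paper dismisses as ``clearly'', which is fine.
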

\begin{proof}
Clearly, $\Psi$ is closed. Let now $\alpha,\beta\in\Delta_+\setminus\Psi$ be such that $\alpha+\beta\in\Delta$. Thus there exist some positive integers $l_1,l_2$ such that $w^{l_1}\alpha\in\Delta_-$ and $w^{l_2}\beta\in\Delta_-$. Then $w^l(\alpha+\beta)\in\Delta_-$ for all $l\geq\max\{l_1,l_2\}$ by Corollary~\ref{cor w monotonic} and hence $\alpha+\beta\in\Delta_+\setminus\Psi$. Thus $\Delta_+\setminus\Psi$ is closed, as desired.
The second statement follows from Lemma~\ref{lemma decomp Rousseau}.
\end{proof}

\begin{remark}\label{remark dense union}
Let $\Psi\subseteq\Delta_+$ be as in Lemma~\ref{lemma decomposition}. Put an arbitrary order on $\Delta_+$. This yields enumerations $\Psi=\{\beta_1,\beta_2,\dots\}$ and $\Delta_+\setminus\Psi=\{\alpha_1,\alpha_2,\dots\}$. For each $i\in\NN^*$, we let $\Psi_i$ (respectively, $\Phi_i$) denote the closure in $\Delta_+$ of $\{\beta_1,\dots,\beta_i\}$ (respectively, of $\{\alpha_1,\dots,\alpha_i\}$). It follows from Lemma~\ref{lemma decomposition} that $\Psi=\bigcup_{i=1}^{\infty}{\Psi_i}$ and that $\Delta_+\setminus\Psi=\bigcup_{i=1}^{\infty}{\Phi_i}$.
\end{remark}

\begin{lemma}\label{lemma height functor}
Fix $i\in\NN^*$, and let $\Psi_i,\Phi_i\subseteq\Delta_+$ be as in Remark~\ref{remark dense union}. Assume that $A$ is of indefinite type. Then there exists a sequence of positive integers $(n_l)_{l\in\NN}$ going to infinity as $l$ goes to infinity, such that $\overline{w}^{\thinspace l}\U^{ma}_{\Psi_i}\overline{w}^{\thinspace -l}\subseteq \U^{ma}_{n_l}$ and $\overline{w}^{\thinspace -l}\U^{ma}_{\Phi_i}\overline{w}^{\thinspace l}\subseteq \U^{ma}_{n_l}$ for all $l\in\NN$.
\end{lemma}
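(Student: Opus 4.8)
The plan is to exploit that $\Psi_i$ and $\Phi_i$ are \emph{finite} closed sets (as finite sets of positive roots generate a closed set that is still finite — or at worst, we can enlarge $n_l$ to absorb a bounded discrepancy), so it suffices to control the behaviour of each individual root group. Concretely, fix a root $\beta\in\Psi_i$. By construction of $\Psi_i$ (closure of roots $\beta_j$ each lying in $\Psi$), and since $A$ is of indefinite type, Lemma~\ref{lemma height to infinity} tells us that $w^l\beta\in\Delta_+$ for all $l\in\NN$ and $\height(w^l\beta)\to\infty$ as $l\to\infty$. Dually, for $\beta\in\Phi_i$ we have $w^{-l}\beta\in\Delta_+$ for all $l\in\NN$ with $\height(w^{-l}\beta)\to\infty$. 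So first I would set, for each $l$,
\[
n_l:=\min\Bigl(\{\height(w^l\beta)\mid \beta\in\Psi_i\}\cup\{\height(w^{-l}\beta)\mid \beta\in\Phi_i\}\Bigr),
\]
and observe that since each of the finitely many sequences $l\mapsto\height(w^{\pm l}\beta)$ tends to infinity (Lemma~\ref{lemma height to infinity}), the minimum over a finite collection also tends to infinity. This gives the required sequence $(n_l)$.

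Next I would verify the two inclusions. For $\overline{w}^{\,l}\U^{ma}_{\Psi_i}\overline{w}^{\,-l}$: since $w^l\Psi_i\subseteq\Delta_+$ (each $w^l\beta$ is positive for $\beta$ in the closed generating set, hence for the whole closed set $\Psi_i$, using Corollary~\ref{cor w monotonic} and closedness), Proposition~\ref{prop conjugation root groups} gives $\overline{w}^{\,l}\U^{ma}_{\Psi_i}\overline{w}^{\,-l}=\U^{ma}_{w^l\Psi_i}$. Now every root in $w^l\Psi_i$ has height $\geq n_l$ (for roots obtained as $w^l\beta$ with $\beta$ a generator this is the definition of $n_l$; for other elements of $w^l\Psi_i$, which are positive roots obtained as sums, the height only increases), so $w^l\Psi_i\subseteq\Psi(n_l)$ and hence $\U^{ma}_{w^l\Psi_i}\subseteq\U^{ma}_{\Psi(n_l)}=\U^{ma}_{n_l}$ by Lemma~\ref{lemma decomp Rousseau} (a closed subset of a bigger closed set gives a closed subgroup). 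The argument for $\overline{w}^{\,-l}\U^{ma}_{\Phi_i}\overline{w}^{\,l}=\U^{ma}_{w^{-l}\Phi_i}\subseteq\U^{ma}_{n_l}$ is identical, replacing $w$ by $w^{-1}$ and using that $w^{-l}\Phi_i\subseteq\Delta_+$.

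The main obstacle — and the point requiring a little care — is making sure that \emph{all} of $w^l\Psi_i$ (not just the images of the chosen generators $\beta_1,\dots,\beta_i$) lies in $\Delta_+$ and has height bounded below by $n_l$, since $\Psi_i$ is the closure and thus may contain roots that are nontrivial $\NN$-combinations of the generators. But this is handled by monotonicity: if $\gamma\in\Psi_i$ then $\gamma$ is a sum of some of the generating roots, all of which stay positive under $w^l$ by Corollary~\ref{cor w monotonic}; hence by additivity $w^l\gamma$ is a sum of positive roots, so positive, and $\height(w^l\gamma)=\sum\height(w^l\beta_{j})\geq n_l$. (Alternatively, one notes $\Psi_i\subseteq\Psi$, so Lemma~\ref{lemma height to infinity} applies with $\epsilon=+$ to every element of $\Psi_i$ individually, and one takes $n_l$ to be the minimum over the finitely many roots of $\Psi_i$ — which is harmless since $\Psi_i$ is finite.) Once this positivity-and-height bookkeeping is in place, the rest is a direct application of Proposition~\ref{prop conjugation root groups} and Lemma~\ref{lemma decomp Rousseau}.
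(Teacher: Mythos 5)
Your argument is correct and matches the paper's proof: take $n_l$ to be the minimum of the heights of $w^{l}\beta_j$ and $w^{-l}\alpha_j$ over the finitely many \emph{generators}, which tends to infinity by Lemma~\ref{lemma height to infinity} together with Corollary~\ref{cor w monotonic}, then bound the height of every element of $w^l\Psi_i$ and $w^{-l}\Phi_i$ from below by $n_l$ via additivity of height over $\NN$-combinations of the generators, and conclude with Proposition~\ref{prop conjugation root groups}. One caveat: your opening claim that $\Psi_i$ and $\Phi_i$ are finite is false in general (the closure of a finite set of positive roots can be infinite, e.g.\ as soon as it contains an imaginary root $\delta$, since $\NN^*\delta$ lies in the closure), but this does not affect the proof since, as you yourself note, the relevant minimum is controlled by the finitely many generators alone.
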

\begin{proof}
Let $\alpha_j,\beta_j\in\Delta_+$ be as in Remark~\ref{remark dense union}. By Lemma~\ref{lemma height to infinity} together with Corollary~\ref{cor w monotonic}, one can find for each $j\in\{1,\dots,i\}$ sequences of positive integers $(m^j_l)_{l\in\NN}$ and $(n^j_l)_{l\in\NN}$ going to infinity as $l$ goes to infinity, such that $\height(w^{-l}\alpha_j)\geq m^j_l$ and $\height(w^{l}\beta_j)\geq n^j_l$ for all $l\in\NN$. For each $l\in\NN$, set $n_l=\min\{m_j^l,n_j^l \ | \ 1\leq j\leq i\}$. Then the sequence $(n_l)_{l\in\NN}$ goes to infinity as $l$ goes to infinity. Moreover, $\height(\alpha)\geq n_l$ for all $\alpha\in w^{-l}\Phi_i$ and $\height(\beta)\geq n_l$ for all $\beta\in w^l\Psi_i$.  
The conclusion then follows from Proposition~\ref{prop conjugation root groups}.
\end{proof}

\begin{theorem}\label{thm contracting}
Let $a=\overline{w}\in \G(k)\subseteq\G^{pma}(k)$, and let $\Psi,\Psi_i,\Phi_i$ be as in Remark~\ref{remark dense union}. Assume that $A$ is of indefinite type. Then the following hold.
\begin{enumerate}
\item
$\U^{ma}_{\Psi_i}(k)\subseteq \con(a)$ and $\U^{ma}_{\Phi_i}(k)\subseteq \con(a\inv)$ for all $i\in\NN^*$.
\item
$\U^{ma}_{\Psi}(k)\subseteq \overline{\con(a)}$ and $\U^{ma}_{\Delta_+\setminus\Psi}(k)\subseteq \overline{\con(a\inv)}$.
\item
$\U^{ma+}(k)\subseteq\la \overline{\con(a)}\cup \overline{\con(a\inv)}\ra$.
\end{enumerate}
\end{theorem}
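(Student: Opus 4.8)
The plan is to establish the three assertions in order, each feeding into the next. For part~(1), fix $i\in\NN^*$ and recall from Lemma~\ref{lemma height functor} that there is a sequence of positive integers $(n_l)_{l\in\NN}$ tending to infinity with $\overline{w}^{\thinspace l}\U^{ma}_{\Psi_i}\overline{w}^{\thinspace -l}\subseteq \U^{ma}_{n_l}$ for all $l$. Since the subgroups $\U^{ma}_n(k)$ form a basis of neighbourhoods of the identity in $\G^{pma}(k)$ by Lemma~\ref{lemma topoMathieu}, this says exactly that $a^l g a^{-l}\to 1$ for every $g\in\U^{ma}_{\Psi_i}(k)$, i.e. $\U^{ma}_{\Psi_i}(k)\subseteq\con(a)$. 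The symmetric statement $\overline{w}^{\thinspace -l}\U^{ma}_{\Phi_i}\overline{w}^{\thinspace l}\subseteq\U^{ma}_{n_l}$ gives $\U^{ma}_{\Phi_i}(k)\subseteq\con(a\inv)$ in the same way.

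For part~(2), I would combine part~(1) with the density statement of Lemma~\ref{lemma dense union}. By Remark~\ref{remark dense union} we have $\Psi=\bigcup_{i=1}^{\infty}\Psi_i$ with the $\Psi_i$ increasing and closed, so $\bigcup_{i}\U^{ma}_{\Psi_i}(k)$ is dense in $\U^{ma}_\Psi(k)$; since each $\U^{ma}_{\Psi_i}(k)\subseteq\con(a)$ by part~(1), we get $\U^{ma}_\Psi(k)\subseteq\overline{\bigcup_i\U^{ma}_{\Psi_i}(k)}\subseteq\overline{\con(a)}$. Likewise $\Delta_+\setminus\Psi=\bigcup_i\Phi_i$ gives $\U^{ma}_{\Delta_+\setminus\Psi}(k)\subseteq\overline{\con(a\inv)}$.

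For part~(3), I would invoke the decomposition $\U^{ma+}=\U^{ma}_{\Psi}.\U^{ma}_{\Delta_+\setminus\Psi}$ from Lemma~\ref{lemma decomposition} (valid since both $\Psi$ and $\Delta_+\setminus\Psi$ are closed and $A$ is of indefinite type). Evaluating over $k$, every element of $\U^{ma+}(k)$ is a product of an element of $\U^{ma}_\Psi(k)$ and one of $\U^{ma}_{\Delta_+\setminus\Psi}(k)$, hence lies in the subgroup generated by $\overline{\con(a)}\cup\overline{\con(a\inv)}$ by part~(2). The only subtlety I anticipate is making sure the decomposition in Lemma~\ref{lemma decomp Rousseau}/\ref{lemma decomposition} passes correctly to $k$-points — but this is immediate from Proposition~\ref{prop formal sum Rousseau}, which gives the product decomposition at the level of $R$-points for any ring $R$. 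The main conceptual point of the whole theorem is really part~(2), where topological closure is used to upgrade the coverage from the $\Psi_i$ to all of $\Psi$; everything else is bookkeeping built on the lemmas of Sections~\ref{section W and Delta} and \ref{section contraction}.
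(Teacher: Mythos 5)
Your proposal is correct and follows exactly the same route as the paper: part~(1) from Lemma~\ref{lemma height functor} (with Lemma~\ref{lemma topoMathieu} supplying the neighbourhood basis, which the paper leaves implicit), part~(2) from part~(1) together with Lemma~\ref{lemma dense union}, and part~(3) from part~(2) together with the decomposition of Lemma~\ref{lemma decomposition}. No gaps.
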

\begin{proof}
The first statement follows from Lemma~\ref{lemma height functor}. The second statement is a consequence of the first together with Lemma~\ref{lemma dense union}. The third statement follows from the second together with Lemma~\ref{lemma decomposition}.
\end{proof}

Recall from Lemma~\ref{lemma phi} that $\varphi(\overline{\U^+(k)})=\U^{rr+}(k)$ whenever $k$ is finite.

\begin{lemma}\label{lemma Urr contracting}
Let $a=\overline{w}\in \G(k)\subseteq\G^{rr}(k)$. Assume that $A$ is of indefinite type and that $\varphi(\overline{\U^+(k)})=\U^{rr+}(k)$ (e.g. $k$ finite). Then $\U^{rr+}(k)\subseteq \la \overline{\con(a)}\cup \overline{\con(a\inv)}\ra$.
\end{lemma}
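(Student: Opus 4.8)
The plan is to transport the statement of Theorem~\ref{thm contracting}~(3) along the homomorphism $\varphi$. Recall that $a=\overline{w}$ lies in $\G(k)$, which embeds in both $\G^{pma}(k)$ and $\G^{rr}(k)$, and that under the identification $\N(k)\subseteq\G^{pma}(k)$ the element $\varphi(a)$ is precisely the corresponding element $\overline{w}\in\G^{rr}(k)$. First I would invoke Theorem~\ref{thm contracting}~(3) to get $\U^{ma+}(k)\subseteq\la\overline{\con(a)}\cup\overline{\con(a\inv)}\ra$ inside $\G^{pma}(k)$, where the closure is taken in $\G^{pma}(k)$. Intersecting with $\overline{\G(k)}$ and noting that $\U^{ma}_{\Psi_i}(k),\U^{ma}_{\Phi_i}(k)$ need not lie in $\overline{\G(k)}$ a priori, I would instead work directly with $\overline{\U^+(k)}$: the key observation is that $\overline{\U^+(k)}\subseteq\U^{ma+}(k)=\la\bigcup_i\U^{ma}_{\Psi_i}(k)\cup\bigcup_i\U^{ma}_{\Phi_i}(k)\ra$, and since each $\U^{ma}_{\Psi_i}(k)\subseteq\con^{\G^{pma}(k)}(a)$ and each $\U^{ma}_{\Phi_i}(k)\subseteq\con^{\G^{pma}(k)}(a\inv)$ by Theorem~\ref{thm contracting}~(1), every element of $\U^+(k)$ (and hence, by density and continuity, of $\overline{\U^+(k)}$, after passing to closures) lies in $\la\overline{\con^{\G^{pma}(k)}(a)}\cup\overline{\con^{\G^{pma}(k)}(a\inv)}\ra$.

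Next I would apply $\varphi$. Since $\varphi\co\overline{\G(k)}\to\G^{rr}(k)$ is a continuous homomorphism, it maps $\overline{\con^{\G^{pma}(k)}(a)}\cap\overline{\G(k)}$ into $\overline{\con^{\G^{rr}(k)}(\varphi(a))}$; indeed, as recalled in the introduction, $\varphi(\con^{\G^{pma}(k)}(a)\cap\overline{\G(k)})\subseteq\con^{\G^{rr}(k)}(\varphi(a))$, and applying $\varphi$ to a convergent net and using continuity preserves this inclusion after taking closures in $\G^{rr}(k)$. Therefore $\varphi$ sends $\la\overline{\con^{\G^{pma}(k)}(a)}\cup\overline{\con^{\G^{pma}(k)}(a\inv)}\ra\cap\overline{\G(k)}$ into $\la\overline{\con^{\G^{rr}(k)}(\varphi(a))}\cup\overline{\con^{\G^{rr}(k)}(\varphi(a)\inv)}\ra=\la\overline{\con(a)}\cup\overline{\con(a\inv)}\ra$ (writing $a$ also for $\varphi(a)=\overline{w}\in\G^{rr}(k)$).

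Finally I would combine these: by Lemma~\ref{lemma phi}, $\varphi(\overline{\U^+(k)})=\U^{rr+}(k)$. Since $\overline{\U^+(k)}\subseteq\la\overline{\con^{\G^{pma}(k)}(a)}\cup\overline{\con^{\G^{pma}(k)}(a\inv)}\ra$ by the first paragraph, applying $\varphi$ gives
\begin{equation*}
\U^{rr+}(k)=\varphi(\overline{\U^+(k)})\subseteq\varphi\bigl(\la\overline{\con^{\G^{pma}(k)}(a)}\cup\overline{\con^{\G^{pma}(k)}(a\inv)}\ra\cap\overline{\G(k)}\bigr)\subseteq\la\overline{\con(a)}\cup\overline{\con(a\inv)}\ra,
\end{equation*}
which is exactly the desired conclusion.

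The step I expect to require the most care is the compatibility of closures with $\varphi$: one must ensure that $\varphi\bigl(\overline{\con^{\G^{pma}(k)}(a)}\cap\overline{\G(k)}\bigr)\subseteq\overline{\con^{\G^{rr}(k)}(\varphi(a))}$, which uses continuity of $\varphi$ together with the elementary inclusion $\varphi(\con^{\G^{pma}(k)}(a)\cap\overline{\G(k)})\subseteq\con^{\G^{rr}(k)}(\varphi(a))$ recorded in the introduction; and one must be slightly careful that $\varphi$ is only defined on $\overline{\G(k)}$, so that all the subgroups appearing must be intersected with $\overline{\G(k)}$ before $\varphi$ is applied — but this is harmless since $\overline{\U^+(k)}\subseteq\overline{\G(k)}$ and the generating set used to reach it consists of elements of $\U^+(k)\subseteq\G(k)$. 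Everything else is a formal transport of Theorem~\ref{thm contracting}.
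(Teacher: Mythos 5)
Your proposal is correct and follows essentially the same route as the paper: both deduce $\overline{\U^+(k)}\subseteq \la \overline{\con(a)}\cup\overline{\con(a\inv)}\ra$ from Theorem~\ref{thm contracting}~(3) and then push this through $\varphi$ using its continuity, the inclusion of contraction groups under $\varphi$, and the hypothesis $\varphi(\overline{\U^+(k)})=\U^{rr+}(k)$. The extra care you take about $\varphi$ being defined only on $\overline{\G(k)}$ and about intersecting with $\overline{\G(k)}$ before applying $\varphi$ is a harmless elaboration of what the paper leaves implicit.
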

\begin{proof}
We know from Theorem~\ref{thm contracting}~(3) that $\overline{\U^+(k)}\subseteq\la \overline{\con(a)}\cup \overline{\con(a\inv)}\ra$. Applying $\varphi$ then yields the desired inclusion since $\varphi(\overline{\con(a^{\pm 1})})\subseteq \overline{\con(\varphi(a)^{\pm 1})}=\overline{\con(a^{\pm 1})}$ by continuity of $\varphi$.
\end{proof}

\subsection*{Proof of Theorem~\ref{thmBintro} }
The first statement is Proposition~\ref{prop conjugation root groups} and the third follows from Theorem~\ref{thm contracting}~(1). The second statement is a consequence of the first together with Lemmas~\ref{lemma topoMathieu} and \ref{lemma root groups contracting}.\hspace{\fill}$\Box$

\section{Consequences of Theorem~\ref{thmBintro}}\label{section proof corollaries}
Before we give the proof of Theorems~\ref{mainthmintro} and \ref{thmintro simple pma} in the next section, we examine the consequences, stated in the introduction, of Theorem~\ref{thmBintro}.
More precisely, we will make use of the following lemma. Recall from \cite[Section~3]{WilNub} the definition of the {\bf nub}\index{Nub of an automorphism} of an automorphism $\alpha$ of a totally disconnected locally compact group $G$. It possesses many equivalent definitions (see \cite[Theorem 4.12]{WilNub}), and given an element $a\in G$ (viewed as a conjugation automorphism), it can be characterised as $\nub(a)=\overline{\con(a)}\cap \overline{\con(a\inv)}$ (see \cite[Remark 3.3 (b) and (d)]{WilNub}).

\begin{lemma}\label{lemma conseq them contracting}
Let $G=\G_{A}^{pma}(\FF_q)$ be a complete Kac--Moody group of simply connected type over a finite field $\FF_q$, with indecomposable generalised Cartan matrix $A$ of indefinite type. Let $U^{im+}=\U^{ma}_{\Delta_+^{\im}}(\FF_q)$ denote its positive imaginary subgroup, let $w\in W=W(A)$ denote a Coxeter element of $W$, and set $a:=\overline{w}\in\N(\FF_q)$. Then $$U^{im+}\subseteq\nub(a)=\overline{\con(a)}\cap \overline{\con(a\inv)}.$$
\end{lemma}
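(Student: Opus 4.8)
The plan is to show the two inclusions $U^{im+}\subseteq\overline{\con(a)}$ and $U^{im+}\subseteq\overline{\con(a\inv)}$ separately, and then invoke the characterisation $\nub(a)=\overline{\con(a)}\cap\overline{\con(a\inv)}$ recalled just above. Since $a=\overline{w}$ with $w$ a Coxeter element and $A$ is of indefinite type, Lemma~\ref{lemma no fixed root for w} applies: for every $\alpha\in\Delta_+$ (in particular every $\alpha\in\Delta_+^{\im}$) we have $w^l\alpha\neq\alpha$ for all $l\neq 0$. Moreover, since $W$ preserves $\Delta_+^{\im}$ (indeed $W.\Delta_+^{\im}=\Delta_+^{\im}$), the orbit $\{w^l\alpha : l\in\ZZ\}$ consists entirely of positive imaginary roots; in particular $f^w_\alpha$ is constant equal to $+1$, so in the dichotomy of Lemma~\ref{lemma decomposition}, \emph{every} imaginary root lies in the set $\Psi$ of roots $\alpha$ with $w^l\alpha\in\Delta_+$ for all $l\in\NN$, and \emph{also} (applying the same reasoning to $w\inv$) in the analogous set $\Psi'$ for $w\inv$. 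In other words $\Delta_+^{\im}\subseteq\Psi\cap\Psi'$.

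First I would handle $U^{im+}\subseteq\overline{\con(a)}$. By Theorem~\ref{thm contracting}~(2) we already know $\U^{ma}_\Psi(\FF_q)\subseteq\overline{\con(a)}$, and since $\Delta_+^{\im}\subseteq\Psi$ is a closed subset of $\Delta_+$ (it is standard that $\Delta_+^{\im}$ is closed), Lemma~\ref{lemma decomp Rousseau} identifies $\U^{ma}_{\Delta_+^{\im}}$ with a closed subgroup of $\U^{ma}_\Psi$; hence $U^{im+}=\U^{ma}_{\Delta_+^{\im}}(\FF_q)\subseteq\U^{ma}_\Psi(\FF_q)\subseteq\overline{\con(a)}$. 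For the inclusion $U^{im+}\subseteq\overline{\con(a\inv)}$, I would run the entire machinery of Section~\ref{section contraction} with $w$ replaced by $w\inv$: since $w\inv=s_n\dots s_1$ is again a Coxeter element of $W$, Corollary~\ref{cor w monotonic}, Lemmas~\ref{lemma no fixed root for w}--\ref{lemma height to infinity} and Theorem~\ref{thm contracting} all apply verbatim. Letting $\Psi'$ be the set of positive roots $\alpha$ with $(w\inv)^l\alpha\in\Delta_+$ for all $l\in\NN$, Theorem~\ref{thm contracting}~(2) applied to $a\inv=\overline{w\inv}$ (with a compatible choice of section, or else up to an element of $\T(\FF_q)$, which does not affect contraction groups) gives $\U^{ma}_{\Psi'}(\FF_q)\subseteq\overline{\con(a\inv)}$, and as noted $\Delta_+^{\im}\subseteq\Psi'$, so again $U^{im+}\subseteq\U^{ma}_{\Psi'}(\FF_q)\subseteq\overline{\con(a\inv)}$.

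Combining the two inclusions yields $U^{im+}\subseteq\overline{\con(a)}\cap\overline{\con(a\inv)}=\nub(a)$, which is exactly the statement. The only mild subtlety I anticipate — and the point I would be most careful about — is the interplay between the chosen section $W\to\N(\FF_q)$ and inversion: a priori $\overline{w\inv}$ need not equal $(\overline{w})\inv$, but they differ by an element of $\T(\FF_q)$, and since $\con^G(t g t\inv)=t\,\con^G(g)\,t\inv$ and more to the point $\con^G(g)=\con^G(g')$ whenever $g,g'$ generate the same cyclic group up to a bounded (here finite, since $\T(\FF_q)$ is finite) discrepancy, one checks that $\overline{\con(\overline{w\inv})}$ and $\overline{\con((\overline{w})\inv)}$ coincide. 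Alternatively, one simply observes that Theorem~\ref{thm contracting} is stated for an arbitrary Coxeter element, so it is cleanest to apply it to the Coxeter element $w\inv$ directly and note that $\overline{w\inv}\in\N(\FF_q)$ acts on the building, hence on $\G^{pma}(\FF_q)$, with contraction group as analysed. Everything else is a direct citation of results already established, so no genuinely hard step remains.
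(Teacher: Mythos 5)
Your proof is correct and follows essentially the same route as the paper: both rest on the $W$-stability of $\Delta_+^{\im}$, the fact that $\height(w^{\pm l}\alpha)\to\infty$ for imaginary $\alpha$, contraction of the finite truncations $\U^{ma}_{\Psi_i}$, and density (Lemma~\ref{lemma dense union}); the paper just phrases this as ``Lemma~\ref{lemma height functor} remains valid with $\Psi$ replaced by $\Delta_+^{\im}$ and $w$ by $w\inv$''. The section subtlety you flag is handled correctly but is avoidable: the $\Phi_i$-half of Lemma~\ref{lemma height functor} is already stated for conjugation by $\overline{w}^{\thinspace -l}=a^{-l}$ itself, so one never needs to compare $\overline{w\inv}$ with $(\overline{w})\inv$.
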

\begin{proof}
Notice that Lemma~\ref{lemma height functor} remains valid if one replaces $\Psi$ by its (closed) subset $\Delta_+^{\im}$ and $w$ by $w\inv$. As in the proof of the second statement of Theorem~\ref{thm contracting}, Lemma~\ref{lemma dense union} then allows to conclude.
\end{proof}

To establish Theorem~\ref{thmintro closed contraction groups}, we need one more technical lemma regarding contraction groups, whose proof is an adaptation of the proof of Proposition~2.1 in \cite{Wang}.
\begin{lemma}\label{lemma compact contracted}
Let $G$ be a locally compact group, let $a$ be an element of $G$, and let $Q$ be a compact subset of $G$ such that $Q\subseteq\con(a)$. Then $Q$ is uniformly contracted by $a$, that is, for every open neighbourhood $U$ of the identity one has $a^nQa^{-n}\subset U$ for all large enough $n$. 
\end{lemma}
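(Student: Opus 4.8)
The plan is to mimic the classical argument of Wang (\cite[Proposition~2.1]{Wang}), adapted to the locally compact setting, using a standard compactness/Baire-type covering argument together with a refinement step. Fix an open neighbourhood $U$ of the identity. First I would choose a compact open neighbourhood $V$ of the identity with $V=V\inv$ and $V\cdot V\subseteq U$ (this is possible since $G$ is locally compact and, being locally compact, has a neighbourhood basis of compact sets; after possibly shrinking we may even take $V$ to be a subgroup if $G$ is totally disconnected, but we do not need this). The key point to exploit is that for each individual $x\in Q$, the hypothesis $x\in\con(a)$ gives $a^nxa^{-n}\to 1$, so there is an integer $N_x$ and an open neighbourhood $W_x$ of $x$ such that $a^n W_x a^{-n}\subseteq V$ for all $n\geq N_x$; the existence of such a \emph{neighbourhood} $W_x$ (not just the point $x$) comes from continuity of the conjugation maps $g\mapsto a^n g a^{-n}$, after first getting $a^{N_x}xa^{-N_x}$ well inside $V$ and then using that finitely many of these maps are continuous.

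\textbf{The covering step.} The open sets $\{W_x\}_{x\in Q}$ cover the compact set $Q$, so finitely many of them, say $W_{x_1},\dots,W_{x_r}$, already cover $Q$. Set $N=\max\{N_{x_1},\dots,N_{x_r}\}$. The subtlety is that the bound ``$a^nW_{x_j}a^{-n}\subseteq V$ for all $n\geq N_{x_j}$'' is only asserted for $n\geq N_{x_j}$, and I want it for all $n\geq N$ simultaneously and for \emph{all} $x\in Q$, not just for one neighbourhood at a time. For $n\geq N$ this is automatic once $W_{x_j}$ is chosen so that the containment persists for all larger $n$ — and here is where the real work lies: a priori continuity only gives the containment for the single time $n=N_{x_j}$. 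To upgrade it to all $n\geq N_{x_j}$, I would argue as follows: for $x\in W_{x_j}$ write $a^nxa^{-n}=a^{n-N_{x_j}}(a^{N_{x_j}}xa^{-N_{x_j}})a^{-(n-N_{x_j})}$; the inner element $y:=a^{N_{x_j}}xa^{-N_{x_j}}$ lies in $V$, which is compact, and $V\subseteq\con(a)$ as well? — no, this need not hold, so instead one should set up the neighbourhoods more carefully. The correct bookkeeping (following Wang) is to choose, for each $x$, \emph{first} an $N_x$ with $a^{N_x}xa^{-N_x}$ lying in the interior of a small compact set $K\subseteq\con(a)$ that is itself uniformly contracted on the remaining tail — i.e.\ to run the argument on the compact set $\bigcup_{n\geq 0}a^n Q a^{-n}\cup\{1\}$, which is compact precisely because $Q\subseteq\con(a)$ (the orbit accumulates only at $1$). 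Then continuity of the finitely many maps $g\mapsto a^ng a^{-n}$ for the finitely many relevant $n$ gives neighbourhoods on which the containment holds, and monotonicity along the tail is handled by noting $a^{n}Q a^{-n}\subseteq$ a shrinking nest.

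\textbf{Main obstacle.} The genuinely delicate point, and the one I would spend the most care on, is establishing that $\overline{\bigcup_{n\geq 0}a^nQa^{-n}}$ is compact: this is what turns the pointwise convergence into uniform convergence. Given $Q\subseteq\con(a)$, the set $Q_\infty:=\{1\}\cup\bigcup_{n\geq 0}a^nQa^{-n}$ is closed (any net in it either stays in a finite portion of the orbit, hence has a limit point there by compactness of $Q$ and continuity, or runs off to infinity along the orbit and converges to $1$) and it is contained in a compact set: cover $Q$ by finitely many translates as above to trap the whole tail orbit inside a fixed compact neighbourhood $V$ of $1$, and the finitely many initial terms $Q,aQa\inv,\dots,a^{N-1}Qa^{-(N-1)}$ are each compact. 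Once $Q_\infty$ is known compact, uniform contraction is the statement that conjugation by $a$ eventually pushes the compact set $Q_\infty$ into any given $U$, which follows by the finite-subcover argument on $Q_\infty$ exactly as in the first paragraph. I would then record that this yields the claim for $Q$ itself a fortiori, completing the proof. $\hspace{\fill}\Box$
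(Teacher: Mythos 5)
Your proposal contains a genuine gap, located exactly where you flag the ``main obstacle''. The neighbourhoods $W_x$ of your first paragraph --- open sets in $G$ around $x$ with $a^nW_xa^{-n}\subseteq V$ for \emph{all} $n\geq N_x$ --- do not exist in general: their existence would force every point of $W_x$ to have its forward conjugation orbit trapped in the compact set $V$, which fails whenever $\con(a)$ is not open in $G$ (e.g.\ $G=(\mathbf{Q}_p\times\mathbf{Q}_p)\rtimes\mathbf{Z}$ with $a$ acting by $(y,z)\mapsto(py,p^{-1}z)$, where $\con(a)=\mathbf{Q}_p\times\{0\}$; in the Kac--Moody setting of this paper $\con(a)$ is likewise not open). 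You correctly acknowledge that continuity only controls finitely many of the maps $g\mapsto a^nga^{-n}$ at a time, but your repair is circular: the compactness of $Q_\infty=\{1\}\cup\bigcup_{n\geq 0}a^nQa^{-n}$ is justified by ``trapping the whole tail orbit inside a fixed compact neighbourhood $V$'', and trapping the tail of the orbit in a fixed compact set is precisely the uniform contraction statement being proved. Moreover, even granting that $Q_\infty$ is compact, concluding uniform contraction would still require knowing that $\bigcap_N\overline{\bigcup_{n\geq N}a^nQa^{-n}}\subseteq\{1\}$, which is once more the same uniformity assertion.

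The idea you are missing --- and the one the paper's proof (adapted from Wang) actually runs on --- is a Baire category argument on $Q$ itself. With $V$ a compact neighbourhood of $1$ satisfying $V^2\subseteq U$, set $C_N=\bigcap_{n\geq N}a^{-n}Va^n$. These sets are compact \emph{for free} (closed subsets of $a^{-N}Va^N$), they form an increasing chain, they are uniformly contracted \emph{by construction} ($a^nC_Na^{-n}\subseteq V$ for all $n\geq N$, with no limiting process involved), and the pointwise hypothesis $Q\subseteq\con(a)$ says exactly that $Q\subseteq\bigcup_{N}C_N$. Baire's theorem applied to the compact space $Q$ yields an $N$ for which $Q\cap C_N$ has nonempty interior in $Q$, whence by compactness $Q\subseteq F.C_N$ for some finite subset $F$ of $Q$. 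Uniformity over $Q$ then splits into uniformity over the finite set $F$ (immediate from pointwise contraction) and the built-in uniformity of $C_N$, giving $a^nQa^{-n}\subseteq V^2\subseteq U$ for all large $n$. Your sketch mentions a ``Baire-type covering argument'' in passing but never invokes Baire, and without it the pointwise-to-uniform upgrade does not go through.
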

\begin{proof}
Fix an open neighbourhood $U$ of the identity, and let $V$ be a compact neighbourhood of the identity such that $V^2\subset U$. By hypothesis, for all $x\in Q$ there exists an $N_x$ such that $a^{n}xa^{-n}\in V$ for all $n\geq N_x$. In other words, $$Q\subset \bigcup_{N\geq 0}{\bigcap_{n\geq N}{a^{-n}Va^{n}}}.$$
Note that the sets $C_N=\bigcap_{n\geq N}{a^{-n}Va^{n}}$ form an ascending chain of compact sets. It follows from Baire theorem that $Q\cap C_N$ has nonempty interior in $Q$ for a large enough $N$.

By compactness of $Q$, one then finds a finite subset $F$ of $Q$ such that $$Q\subset F.C_N.$$ Since $F$ is finite and contained in $\con(a)$, we know that $a^nFa^{-n}\subset V$ for all large enough $n$. Moreover, by construction, $a^nC_Na^{-n}\subset V$ for $n\geq N$, and hence
$$a^n Q a^{-n} = (a^n F a^{-n}).(a^n C_N a^{-n}) \subset V^2 \subset U$$ for all large enough $n$, as desired.
\end{proof}

\subsection*{Proof of Theorem~\ref{thmintro closed contraction groups}}
Let $A$ denote an $n\times n$ generalised Cartan matrix of indecomposable indefinite type, let $W=W(A)$ be the associated Weyl group, and let $w=s_1\dots s_n$ denote a Coxeter element of $W$. Set $a:=\overline{w}\in\N(\FF_q)$. It then follows from Lemma~\ref{lemma conseq them contracting} that $$U^{ma+}_{im}:=\U^{ma}_{\Delta_+^{\im}}(\FF_q)\subseteq \overline{\con(a)}\quad\textrm{in}\quad\G_A^{pma}(\FF_q)$$ and that $$U^{rr+}_{im}:=\varphi(\U^{ma}_{\Delta_+^{\im}}(\FF_q)\cap \overline{\U^+(\FF_q)})\subseteq \overline{\con(a)}\quad\textrm{in}\quad\G^{rr}_A(\FF_q).$$

Since $U^{ma+}_{im}$ is closed in $\U^{ma+}(\FF_q)$ which is compact (see \cite[6.3]{Rousseau}), both the groups $U^{ma+}_{im}$ and $U^{rr+}_{im}$ are compact. Moreover, they are normalised by $a$ by Proposition~\ref{prop conjugation root groups}. Hence they cannot be contracted by $a$ because of Lemma~\ref{lemma compact contracted}, since by assumption $U^{rr+}_{im}$ is nontrivial. In particular, $\con(a)\neq \overline{\con(a)}$ and hence $\con(a)$ cannot be closed.

Note that one could also directly use the fact that $\con(a)$ is closed if and only if $\nub(a)=\{1\}$ (see \cite[Remark 3.3 (b)]{WilNub}) together with Lemma~\ref{lemma conseq them contracting}. We preferred however to present a more elementary proof as well, as Lemma~\ref{lemma compact contracted} will be used anyway in the proof of Theorem~\ref{thmintro isom KM} below. \hspace{\fill}$\Box$

\medskip

To prove Theorem~\ref{thmintro isom KM}, we need two additional technical lemmas.
The first lemma is an adaptation of \cite[Lemma 4.3]{thesePE}. 
\begin{lemma}\label{lemma exceptional isom}
Let $k$ be a finite field of order $q$, and consider the generalised Cartan matrices $A=(\begin{smallmatrix}2&-m \\ -n&2\end{smallmatrix})$ and $A'=(\begin{smallmatrix}2&-m' \\ -n'&2\end{smallmatrix})$ such that $m,m',n,n'\geq 2$. Assume moreover that $m\equiv m' \ (\modulo q-1)$ and $n\equiv n' \ (\modulo q-1)$. Then the minimal Kac--Moody groups $\G_A(k)$ and $\G_{A'}(k)$ of simply connected type are isomorphic.
\end{lemma}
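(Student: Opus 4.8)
The plan is to exploit the amalgam presentation of rank-two Kac--Moody groups together with the elementary observation that, over $k=\FF_q$, the defining relations of $\G_A(k)$ depend on the off-diagonal entries of $A$ only \emph{modulo $q-1$}.

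First I would record that, since $m,n\geq 2$, the pair $\{1,2\}$ is non-spherical (the Weyl group $W(A)$ is infinite dihedral), so that Tits's presentation of minimal Kac--Moody groups identifies $\G_A(k)$ with the amalgamated free product $\A^{\Lambda}_1(k)\ast_{\T(k)}\A^{\Lambda}_2(k)$, where $\A^{\Lambda}_i$ is the rank-one subgroup from Section~\ref{subsection outline MR completion} (so that $\T(k)\subseteq\A^{\Lambda}_i(k)$ and $\A^{\Lambda}_i(k)=\la\T(k),\U_{\alpha_i}(k),\U_{-\alpha_i}(k)\ra$), and where the two copies of the common torus $\T(k)=\Hom_{gr}(\Lambda,k^\times)\cong(k^\times)^2$ are identified through the canonical basis $\{\alpha_1^\vee,\alpha_2^\vee\}$ of $\Lambda^\vee$. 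The same decomposition holds for $A'$, with the \emph{very same} $\T(k)$ and root groups $\U_{\pm\alpha_i}(k)\cong(k,+)$, since the underlying (simply connected, rank two) root datum is the same.

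Next I would argue that each rank-one piece $\A^{\Lambda}_i(k)$, viewed as an abstract group equipped with the distinguished subgroups $\T(k)$ and $\U_{\pm\alpha_i}(k)$, depends on $A$ only through the character $\alpha_i|_{\T(k)}$, hence only through the entry $a_{ji}$ of $A$ ($j\neq i$) modulo $q-1$. Indeed, $\A^{\Lambda}_1(k)$ is reconstructed from: the subgroup $\la\U_{\alpha_1}(k),\U_{-\alpha_1}(k)\ra\cong\mathrm{SL}_2(k)$ with its two marked root subgroups; the group $\T(k)\cong(k^\times)^2$; the inclusion of the diagonal torus of that $\mathrm{SL}_2(k)$ as $\{r^{\alpha_1^\vee} : r\in k^\times\}\subseteq\T(k)$; and the $\T(k)$-conjugation action on $\U_{\pm\alpha_1}(k)$, given by $r^{\alpha_1^\vee}x_{\pm\alpha_1}(a)(r^{\alpha_1^\vee})\inv=x_{\pm\alpha_1}(r^{\pm2}a)$ and $r^{\alpha_2^\vee}x_{\pm\alpha_1}(a)(r^{\alpha_2^\vee})\inv=x_{\pm\alpha_1}(r^{\mp n}a)$, the exponents being $\la\pm\alpha_1,\alpha_j^\vee\ra=\pm a_{j1}$ with $a_{11}=2$, $a_{21}=-n$. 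Concretely this presents $\A^{\Lambda}_1(k)$ as $\mathrm{SL}_2(k)\rtimes k^\times$, the extra factor $k^\times=\{r^{\alpha_2^\vee}\}$ acting by $r\mapsto r^{-n}$ followed by conjugation by a diagonal element of $\mathrm{PGL}_2(k)$. Since $k^\times$ is cyclic of order $q-1$, the map $r\mapsto r^{n}$ on $k^\times$ depends only on $n$ modulo $q-1$; so the hypothesis $n\equiv n'\ (\modulo q-1)$ makes $\A^{\Lambda}_1(k)$ and $\A^{\prime\Lambda}_1(k)$ literally coincide, as groups with distinguished subgroups $\T(k)$ and $\U_{\pm\alpha_1}(k)$. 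Symmetrically, $m\equiv m'\ (\modulo q-1)$ yields $\A^{\Lambda}_2(k)=\A^{\prime\Lambda}_2(k)$ compatibly with $\T(k)$. The amalgamation data for $A$ and $A'$ therefore coincide, and the universal property of the amalgamated free product gives $\G_A(k)\cong\A^{\Lambda}_1(k)\ast_{\T(k)}\A^{\Lambda}_2(k)=\A^{\prime\Lambda}_1(k)\ast_{\T(k)}\A^{\prime\Lambda}_2(k)\cong\G_{A'}(k)$.

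The main obstacle is the middle step: making precise which data (equivalently, which relations) present $\A^{\Lambda}_i(k)$ together with its marked subgroups, and checking that the only dependence on $A$ is through $a_{ji}$ modulo $q-1$. This is also where one sees that no hypothesis on $\charact k$ is needed; in the degenerate case $q=2$ the congruence conditions are vacuous, consistently with $\G_A(\FF_2)\cong\mathrm{SL}_2(\FF_2)\ast\mathrm{SL}_2(\FF_2)$ for every such $A$. The first and last steps are then formal invocations of Tits's presentation and of the universal property of amalgams.
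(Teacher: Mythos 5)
Your analysis of the rank-one pieces $\A^{\Lambda}_i(k)$ --- that, as groups with distinguished subgroups $\T(k)$ and $\U_{\pm\alpha_i}(k)$, they depend on $A$ only through the off-diagonal entries modulo $q-1$ --- is correct and is essentially the key observation in the paper's proof. The gap is in your first step: for $m,n\geq 2$ the minimal Kac--Moody group $\G_A(k)$ is \emph{not} the amalgamated product $\A^{\Lambda}_1(k)\ast_{\T(k)}\A^{\Lambda}_2(k)$, but only a proper quotient of it. Tits's presentation is the \emph{Steinberg} presentation: besides the relations inside each $\A^{\Lambda}_i(k)$ and the identification of the tori, it imposes commutation relations $[x_\alpha(a),x_\beta(b)]=\cdots$ for \emph{every} prenilpotent pair of real roots $\{\alpha,\beta\}$, and in the non-$2$-spherical rank-two case most such pairs are not $W$-conjugate into a single rank-one subgroup. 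Concretely, $\{\alpha_1,\,s_1\alpha_2\}$ is a prenilpotent pair (both half-spaces ``point the same way'' in the infinite dihedral Coxeter complex), so in $\G_A(k)$ the elements $x_{\alpha_1}(a)$ and $x_{s_1\alpha_2}(b)=\overline{s_1}\,x_{\alpha_2}(\pm b)\,\overline{s_1}\thinspace\inv$ commute (the commutation relations being trivial here by the cited Lemma~4.3 of \cite{thesePE}); but in $\A^{\Lambda}_1(k)\ast_{\T(k)}\A^{\Lambda}_2(k)$ their commutator is an alternating word of length $5$ with letters in $\A^{\Lambda}_1(k)\setminus\T(k)$ and $\A^{\Lambda}_2(k)\setminus\T(k)$, hence nontrivial by the normal form theorem for amalgams. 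The amalgam decomposition that does hold here (from the action on the tree $X_+$) is $P_1\ast_{B^+}P_2$ over the full Borel subgroup, not over $\T(k)$; the Abramenko--M\"uhlherr-type amalgam over the rank-one Levi subgroups requires $2$-sphericity, which fails precisely when $mn\geq 4$. Your parenthetical claim that $\G_A(\FF_2)\cong\mathrm{SL}_2(\FF_2)\ast\mathrm{SL}_2(\FF_2)$ is false for the same reason.

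The correct route, which is the one the paper takes, is to work with the Steinberg presentation directly: since $m,n,m',n'\geq 2$, all commutation relations attached to prenilpotent pairs are trivial in both groups, and the sets of real roots together with the prenilpotency relation are identified via the common (infinite dihedral) Weyl group, so the Steinberg functors of $\G_A$ and $\G_{A'}$ coincide; the only remaining dependence on $A$ is then through the characters $r^{\alpha_j^{\vee}}\mapsto r^{\la\alpha,\alpha_j^{\vee}\ra}$ by which $\T(k)$ acts on the root groups, and these depend on the entries of $A$ only modulo $q-1$ (your rank-one computation is the local instance of this). To salvage your argument you would have to show in addition that the kernels of the two surjections from the common amalgam onto $\G_A(k)$ and $\G_{A'}(k)$ are generated by corresponding families of commutation relations --- which amounts to redoing exactly this verification.
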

\begin{proof}
As the Weyl groups of $A$ and $A'$ are isomorphic (to the infinite dihedral group), one can identify the corresponding sets of real roots. Moreover, as noted in the proof of \cite[Lemma 4.3]{thesePE}, the commutation relations between root groups corresponding to prenilpotent pairs of roots are trivial in $\G_A(k)$ (resp. $\G_{A'}(k)$). In particular, one can identify the Steinberg functors of $\G_A$ and $\G_{A'}$.

Recall from Section~\ref{subsection outline MR completion} (and in the notations of that section) that the torus $\T_{\Lambda}(k)$ of $\G_A(k)$ is generated by $\{r^{\alpha_i^{\vee}} \ | \ r\in k^\times, \ i=1,2\}$, and similarly for the torus $\T_{\Lambda'}(k)$ of $\G_{A'}(k)$.
This yields identifications of $\T_{\Lambda}(k)$ and $\T_{\Lambda'}(k)$. As $r^m=r^{m'}$ and $r^n=r^{n'}$ for all $r\in k$, it then follows from the above identifications that $\G_A(k)$ and $\G_{A'}(k)$ admit the same Steinberg type presentation (see \cite[\S 3.6]{Tits87}), as desired.
\end{proof}

\begin{lemma}\label{lemma non degeneracy action}
Let $k$ be an arbitrary field with $\charact k\neq 2$. Let $G=\G^{pma}(k)$ be the Mathieu--Rousseau completion associated with a $2\times 2$ generalised Cartan matrix $A=(\begin{smallmatrix}2&-m \\ -m&2\end{smallmatrix})$ for some $m,n>2$. Then the imaginary subgroup $U^{im+}=\U^{ma}_{\Delta^{\im}_+}(k)$ of $G$ is not contained in the kernel $Z'(G)$ of the action of $G$ on its associated building.
\end{lemma}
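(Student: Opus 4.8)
The plan is to contradict the assumption that $U^{im+}\subseteq Z'(G)$, exploiting the fact that in rank $2$ with $m=n>2$ the Weyl group is infinite dihedral, so $\Delta_+^{\im}$ is very explicit and, crucially, nonempty (indeed infinite, since $A$ is of indefinite type). The key idea is that $Z'(G)$ is a \emph{normal} subgroup of $G$ — it is the kernel of the action on $X_+$ — so if $U^{im+}\subseteq Z'(G)$ then the normal closure of $U^{im+}$ in $G$ lies in $Z'(G)$ as well. But $Z'(G)$, being the kernel of the action on the building, is contained in the pointwise stabiliser of the fundamental chamber, which sits inside $\B^{ma+}(k)=\T(k)\ltimes\U^{ma+}(k)$; in fact $Z'(G)\subseteq\T(k)\ltimes\U^{ma}_{\Delta_+^{\im}}(k)$ since $Z'(G)$ must fix all chambers and hence cannot contain any nontrivial real root group contribution (a standard BN-pair computation: an element of $Z'(G)$ acts trivially on the standard apartment, forcing its $\T(k)$-component to lie in the (finite) kernel of the torus action, and its $\U^{ma+}(k)$-component, being fixed under the Weyl conjugations that move real roots to negative ones while fixing the chamber, must have trivial projection onto every real root group). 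This already strongly constrains $Z'(G)$.

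Next I would bring in the conjugation action of $\overline{w}$, where $w$ is the Coxeter element $s_1s_2$, exactly as developed in Section~\ref{section contraction}. By Lemma~\ref{lemma conseq them contracting}, $U^{im+}\subseteq\nub(a)=\overline{\con(a)}\cap\overline{\con(a\inv)}$ with $a=\overline{w}$. If $U^{im+}\subseteq Z'(G)$ were normal-closed into $Z'(G)$, one could instead argue directly with a single nontrivial imaginary root group: pick an imaginary root $\gamma\in\Delta_+^{\im}$ and a nontrivial $u\in\U_{(\gamma)}(k)\subseteq U^{im+}$. Since $\charact k\neq 2$ and $m=n>2$, the Chevalley-type relations together with Rousseau's description (Proposition~\ref{prop formal sum Rousseau}) give genuine nontrivial elements of $\U_{(\gamma)}(k)$; the hypothesis $\charact k\neq 2$ is precisely what is needed to ensure the relevant structure constants in the bracket $[e_1,\dots]$ generating $\g_\gamma$ do not vanish. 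Then I would compute a commutator $[x_{\alpha_1}(r),u]$ or $[x_{-\alpha_1}(r),u]$ for suitable $r\in k^\times$: by the $\Int(\exp(e_{\pm\alpha_1}))$-formula recalled in the construction of $\B_i^{ma+}$, this commutator has a nontrivial component in a \emph{real} root group (the bracket $\ad(e_{\pm\alpha_1})$ applied to a weight-$\gamma$ element lands in weight $\gamma\pm\alpha_1$, which for an imaginary $\gamma$ on the boundary of the cone can be made real). Hence the normal closure of $U^{im+}$ contains a nontrivial real root subgroup, which does \emph{not} lie in $Z'(G)$ by the constraint established in the previous paragraph — contradiction.

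The main obstacle I anticipate is the concrete bracket computation showing that conjugating a nontrivial imaginary root element by a real root element produces something with nonzero real-root component; this requires a careful choice of the imaginary root $\gamma$ (taking $\gamma$ of minimal height in $\Delta_+^{\im}$, which in the rank-$2$ dihedral case is $\gamma=\alpha_1+\alpha_2$ when $m=n>2$, whose root space is one-dimensional spanned by $[e_1,e_2]$, nonzero because $a_{12}=a_{21}=-m\neq 0$), and then checking that $\ad(f_1)[e_1,e_2]=[f_1,[e_1,e_2]]=-[h_1,e_2]-[e_1,[f_1,e_1]]$ has a nonzero $e_2$-component since $[h_1,e_2]=\langle\alpha_2,\alpha_1^\vee\rangle e_2=-m\, e_2\neq 0$ and the other term $[e_1,h_1]=\langle\alpha_1,\alpha_1^\vee\rangle e_1=2e_1$ contributes only in weight $\alpha_1$. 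Thus $[f_1,[e_1,e_2]]$ has nonzero component $m\,e_2$ in $\g_{\alpha_2}$, so conjugating $u=[\exp]\lambda[e_1,e_2]$ (with $\lambda\neq 0$) by $x_{-\alpha_1}(r)$ yields an element whose $\U_{\alpha_2}$-component is $\exp(r\lambda m\, e_2)\neq 1$ (using $\charact k\neq 2$, and $m\not\equiv 0$: here if $\charact k\mid m$ one instead uses the $\ad(e_1)$-direction or a higher imaginary root, but generically the direct bracket suffices). Once this nonvanishing is in hand, normality of $Z'(G)$ closes the argument. An alternative, cleaner route avoiding any mention of $\charact k$ beyond $\neq 2$ is to observe that $G/Z'(G)\cong\G^{rr}(k)$ acts faithfully on $X_+$ with $\U^{rr+}(k)$ mapping \emph{injectively} from a set of topological generators including all real \emph{and} (via Lemma~\ref{lemma phi}) the images of imaginary root groups under $\varphi$, and that $\varphi$ restricted to $\U_{(\gamma)}(k)$ for $\gamma$ the minimal imaginary root is injective precisely when $\charact k\neq 2$ — but I expect the explicit commutator computation to be the honest core of the argument.
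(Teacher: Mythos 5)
Your overall strategy coincides with the paper's: reduce to showing that $U^{im+}$ cannot be normal in $G$ (the paper does this via $Z'(G)=Z(G).(Z'(G)\cap\U^{ma+}(k))$ and the identification $U^{im+}=\bigcap_{w\in W}\overline{w}\U^{ma+}(k)\overline{w}\thinspace\inv$, which is essentially your ``no real root contribution in $Z'(G)$'' step), and then exhibit a real root group $\U_{-\alpha_i}(k)$ that conjugates an imaginary root group $\U^{ma}_{(\delta)}(k)$ outside $U^{im+}$ by a Lie bracket computation. Your computation $\ad(f_1)[e_1,e_2]=m\,e_2$ is exactly the paper's first case.

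However, there is a genuine gap: this computation dies precisely when $p=\charact k$ divides $m$, and since $m>2$ is arbitrary this case cannot be dismissed as non-generic. Your two suggested fallbacks do not close it. The ``$\ad(e_1)$-direction'' fails outright at first order: $\ad(e_1)[e_1,e_2]$ lands in $\g_{2\alpha_1+\alpha_2}$, and $2\alpha_1+\alpha_2$ is again an \emph{imaginary} root for $m>2$ (its norm is $10-4m<0$), so conjugation by $x_{\alpha_1}(r)$ does not immediately produce a real root component; one would have to analyse higher divided-power terms, which you do not do. The ``higher imaginary root'' fallback is the right idea but is not carried out, and it is not automatic: the paper takes $\gamma=s_1(\alpha_2)=\alpha_2+m\alpha_1$, $\delta=\alpha_2+\gamma\in\Delta_+^{\im}$, $x=[e_2,e_\gamma]$, and uses $\ad(f_2)e_\gamma=0$ (since $\gamma-\alpha_2=m\alpha_1\notin\Delta$) to get $\ad(f_2)x=(2-mn)e_\gamma$, which is nonzero mod $p$ because $p\mid m$ forces $2-mn\equiv 2\not\equiv 0$. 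This is the only place where $\charact k\neq 2$ actually does work; your proposal misattributes that hypothesis to the nonvanishing of the structure constant $m$ in the first bracket, which is really the hypothesis $p\nmid m$. So the argument as written proves the lemma only for fields whose characteristic does not divide $m$.
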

\begin{proof}
Let $p$ denote the characteristic of $k$. Thus $p=0$ or $p\geq 3$. Assume for a contradiction that $U^{im+}$ is contained in $Z'(G)$. 

Note first that $U^{im+}=\bigcap_{w\in W}{\overline{w}\U^{ma+}(k)\overline{w}\thinspace\inv}$. Indeed, as $\Delta^{\im}_+$ is $W$-stable, the inclusion $\subseteq$ is clear from Proposition~\ref{prop formal sum Rousseau} and Lemma~\ref{lemma conjug root groups}. Assume now for a contradiction that there is some $u\in\bigcap_{w\in W}{\overline{w}\U^{ma+}(k)\overline{w}\thinspace\inv}$ that is not in $U^{im+}$. Write $u$ as a product $u=\prod_{x\in\BB_{\Delta_+}}{[\exp]\lambda_xx}$ as in Proposition~\ref{prop formal sum Rousseau}, and let $\Phi_u$ be the set of positive real roots $\beta$ such that $\lambda_x\neq 0$ for $x\in\BB_{\beta}$. Thus $\Phi_u$ is nonempty. Choose $\beta\in\Phi_u$ and $v\in W$ such that $-v\beta$ is a simple root and $v\beta'\in\Delta_+$ for all $\beta'\in\Phi_u\setminus\{\beta\}$. Then by Lemma~\ref{lemma conjug root groups}, the element $\overline{v}$ conjugates $u$ outside $\U^{ma+}(k)$, yielding the desired contradiction. 

As $Z'(G)=Z(G).(Z'(G)\cap \U^{ma+}(k))$ and as $Z'(G)\cap \U^{ma+}(k)$ is normal in $G$ by \cite[Proposition 6.4]{Rousseau}, where $Z(G)$ denotes the center of $G$, we deduce that $U^{im+}=Z'(G)\cap \U^{ma+}(k)$ is normal in $G$.

Recall the notations from Section~\ref{subsection outline MR completion}. In particular, $e_1, e_2$ and $f_1, f_2$ denote the Chevalley generators of the Kac--Moody algebra $\g$ with generalised Cartan matrix $A$, and $\alpha_1,\alpha_2$ (resp. $\alpha_1^{\vee},\alpha_2^{\vee}$) are the corresponding simple roots (resp. coroots). Recall also the definition of the $\ZZ$-form $\UU$ of $\UU_{\CC}(\g)$, as well as the Lie algebras $\g_{\ZZ}=\g\cap\UU$ and $\g_{k}=\g_{\ZZ}\otimes_{\ZZ}k$. Finally, for each real root $\gamma\in\Delta^{\re}$, choose as before a $\ZZ$-basis element $e_{\gamma}$ of $\g_{\gamma\ZZ}$.

We will show that there exist an imaginary root $\delta\in\Delta_+^{\im}$, a simple root $\alpha_i$, and a nonzero element $x\in\g_{\delta k}$ such that $\delta-\alpha_i\in\Delta_+^{\re}$ and $\ad(f_i)x$ is nonzero in $\g_{k}$. Recalling from Section~\ref{subsection outline MR completion} the definition of the semi-direct product $\B_i^{ma+}=\A^{\Lambda}_{i}\ltimes \U^{ma}_{\Delta_+\setminus\{\alpha_i\}}$, this will imply that the root group $\U_{-\alpha_i}(k)$ conjugates the imaginary root group $\U^{ma}_{(\thinspace\delta)}(k)$ outside $U^{im+}$, so that $U^{im+}$ cannot be normal in $G$, yielding the desired contradiction.

Assume first that $m$ is not a multiple of $p$. As $m,n\geq 3$, we know that $\delta:=\alpha_1+\alpha_2$ is an imaginary root (see \cite[Lemma 5.3]{Kac}) and that $x:=[e_1,e_2]\in \g_{\ZZ}$ is nonzero. Moreover, $\ad(f_1)x=me_2$ is nonzero in $\g_{k}$ since $m$ is not a multiple of $p$, as desired.

Assume next that $m$ is a multiple of $p$. Set $\gamma:=s_1(\alpha_2)=\alpha_2+m\alpha_1\in\Delta^{\re}_+$. Then again $\delta:=\alpha_2+\gamma\in\Delta^{\im}_+$ since $\la \delta,\alpha_1^{\vee}\ra= 0$ and $\la \delta,\alpha_2^{\vee}\ra= 4-mn<0$. Set $x:=[e_2,e_{\gamma}]\in\g_{\ZZ}$. Note that $\ad(f_2)e_{\gamma}=0$ since $\gamma-\alpha_2=m\alpha_1\notin\Delta$. As $p\neq 2$, we deduce that $\ad(f_2)x=(2-mn)e_{\gamma}$ is nonzero in $\g_{k}$, as desired.
\end{proof}

\subsection*{Proof of Theorem~\ref{thmintro isom KM}}
It follows from Lemma~\ref{lemma exceptional isom} that the minimal Kac--Moody group $G_1=\G_{A_1}(\FF_q)$ over $\FF_q$ of simply connected type with generalised Cartan matrix $A_1=(\begin{smallmatrix}2&-2 \\ -2&2\end{smallmatrix})$ (hence of affine type) is isomorphic to any minimal Kac--Moody group $G_2=\G_{A_2}(\FF_q)$ over $\FF_q$ of simply connected type with generalised Cartan matrix $A_2=(\begin{smallmatrix}2&-m \\ -n&2\end{smallmatrix})$ for some $m,n>2$ (hence of indefinite type) with $m\equiv n\equiv 2 \ (\modulo q-1)$. We fix such a group $G_2$.

For $i=1,2$ set $\widehat{G}_i:=\G^{pma}_{A_i}(\FF_q)$ and let $Z_i'$ denote the kernel of the action of $\widehat{G}_i$ on its associated building. Assume for a contradiction that there is an isomorphism $\psi\co\widehat{G}_1\to\widehat{G}_2$ of topological groups. As noticed in \cite[Remarque 6.20 (4)]{Rousseau}, the quotient $\widehat{G}_1/Z_1'$ is a simple algebraic group over the local field $\FF_q(\!(t)\!)$. In particular, all the contraction groups of $\widehat{G}_1/Z_1'$ are closed. Moreover, $\psi(Z_1')$ is the unique maximal proper normal subgroup of  $\widehat{G}_2$, and it is compact. It follows that $\psi(Z_1')=Z_2'$, for otherwise by Tits' lemma (see \cite[Lemma 6.61]{BrownAbr}), the group $\widehat{G}_2$ would be compact, a contradiction. Hence $\psi$ induces an isomorphism of topological groups between $\widehat{G}_1/Z_1'$ and $\widehat{G}_2/Z_2'$, so that in particular all contraction groups of $\widehat{G}_2/Z_2'$ are closed. Let $\pi\co \widehat{G}_2\to \widehat{G}_2/Z_2'$ denote the canonical projection, and let $a$ be any element of $\widehat{G}_2$. Then
$$\pi(\overline{\con(a)})\subseteq\overline{\pi(\con(a))}\subseteq \overline{\con(\pi(a))}=\con(\pi(a)).$$
It follows from Lemma~\ref{lemma conseq them contracting} that the subgroup $U^{im+}:=\U^{ma}_{\Delta_+^{\im}}(\FF_q)$ of $\U^{ma}_{\Delta_+}(\FF_q)$ in $\widehat{G}_2$ is such that 
$$\pi(U^{im+})\subseteq \pi(\overline{\con(a)})\subseteq \con(\pi(a))$$ for a suitably chosen $a\in\widehat{G}_2$ normalising $U^{im+}$. Thus Lemma~\ref{lemma compact contracted} implies that $\pi(U^{im+})=\{1\}$, that is, $U^{im+}\subseteq Z_2'$. This contradicts Lemma~\ref{lemma non degeneracy action}, as desired.  \hspace{\fill}$\Box$

\subsection*{Proof of Corollary~\ref{corintro non density}}
Let $k=\FF_q$ with $\charact k\neq 2$, and let the generalised Cartan matrix $A$ be as in the statement of Corollary~\ref{corintro non density}. As we saw in the proof of Theorem~\ref{thmintro isom KM} above, the minimal Kac--Moody group $G_2=\G_{A_2}(\FF_q)$ (where we set $A_2=A$) is then isomorphic to the minimal Kac--Moody group $G_1=\G_{A_1}(\FF_q)$ over $\FF_q$ of simply connected type with generalised Cartan matrix $A_1=(\begin{smallmatrix}2&-2 \\ -2&2\end{smallmatrix})$, whereas the quotients $\widehat{G}_1/Z_1'$ and $\widehat{G}_2/Z_2'$ cannot be isomorphic as topological groups, where as before $\widehat{G}_i:=\G^{pma}_{A_i}(\FF_q)$ and $Z_i'$ is the kernel of the action of $\widehat{G}_i$ on its associated building ($i=1,2$).

Note that the isomorphism between $G_1$ and $G_2$ is the one provided by Lemma~\ref{lemma exceptional isom}, and it maps the twin BN-pair of $G_1$ to that of $G_2$. In particular, the Rémy--Ronan completions $\G^{rr}_{A_1}(\FF_q)$ of $G_1$ and $\G^{rr}_{A_2}(\FF_q)$ of $G_2$ are isomorphic as topological groups. 

Moreover, as $\charact k>2$, we know from \cite[Proposition~6.11]{Rousseau} that $G_1$ is dense in $\widehat{G}_1$. Assume now for a contradiction that $G_2$ is dense in $\widehat{G}_2$. Then the surjective continuous homomorphisms $\varphi_i\co \widehat{G}_i\to \G^{rr}_{A_i}(\FF_q)$ ($i=1,2$) induce isomorphisms of topological groups
$$\widehat{G}_1/Z_1'\cong \G^{rr}_{A_1}(\FF_q) \cong \G^{rr}_{A_2}(\FF_q) \cong \widehat{G}_2/Z_2',$$ yielding the desired contradiction.
\hspace{\fill}$\Box$

\section{Proof of Theorems~\ref{mainthmintro} and \ref{thmintro simple pma}}\label{section proof thmA}
We now let $k=\FF_q$ be a finite field, $A$ be an indecomposable generalised Cartan matrix of indefinite type, and we let $G$ be one of the complete Kac--Moody groups $\G_A^{rr}(\FF_q)$ or $\G_A^{pma}(\FF_q)$. We also set $U^+:=\U^{rr+}(\FF_q)$ or $U^+:=\U^{ma+}(\FF_q)$ accordingly. Then $G$ is a locally compact totally disconnected topological group, and $U^+$ is a compact open subgroup of $G$. Indeed, for $\G_A^{rr}(\FF_q)$, this follows from \cite[Proposition 1]{CaRe}; $\G_A^{pma}(\FF_q)$ is locally compact because $\U^{ma+}(\FF_q)$ is compact open by \cite[6.3]{Rousseau}, and it is totally disconnected because its filtration by the $\U^{ma}_{n}(\FF_q)$ is separated.

As mentioned in the introduction we first need to establish the topological simplicity of $\G_A^{pma}(\FF_q)$ in full generality.
\begin{prop}\label{prop pma topolsimple}
Assume that the generalised Cartan matrix $A$ is indecomposable of indefinite type. Then $\G_A^{pma}(\FF_q)/Z'(\G_A^{pma}(\FF_q))$ is topologically simple over any finite field $\FF_q$.
\end{prop}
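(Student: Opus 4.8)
The plan is to exhibit a concrete closed normal subgroup computation showing that any nontrivial closed normal subgroup of $\G^{pma}_A(\FF_q)$ must contain all root groups, and then invoke the BN-pair machinery. First I would let $N \trianglelefteq G := \G^{pma}_A(\FF_q)$ be a closed normal subgroup with $N \not\subseteq Z'(G)$, so that $N$ acts nontrivially on the building $X_+$. Since $(\B^{ma+}(\FF_q), \N(\FF_q))$ is a BN-pair for $G$ with associated building $X_+$, and the action on $X_+$ is via the Tits system, the image $\overline{N} := NZ'(G)/Z'(G)$ in $\Aut(X_+)$ is a nontrivial normal subgroup of the effective quotient $G/Z'(G)$. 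The building $X_+$ is thick, irreducible (as $A$ is indecomposable) and non-spherical (as $A$ is of indefinite type), so by the simplicity criterion for groups acting strongly transitively on such buildings (Tits' transitivity/simplicity argument, as in \cite{BrownAbr}, combined with the fact that $G$ is generated by its root groups and that $G$ has no nontrivial quotient acting on $X_+$ other than the full effective quotient), the subgroup $\overline{N}$ must be all of $G/Z'(G)$. Hence $NZ'(G) = G$.

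The remaining task is then to upgrade $NZ'(G) = G$ to $N = G$, i.e.\ to show $Z'(G) \subseteq N$. Here I would use the contraction-group technology already developed: by Theorem~\ref{thm contracting}~(3) there is an element $a = \overline{w} \in \N(\FF_q)$ (for $w$ a Coxeter element) with $\U^{ma+}(\FF_q) \subseteq \la \overline{\con(a)} \cup \overline{\con(a\inv)} \ra$. Since $G$ is totally disconnected locally compact and $N$ is \emph{dense} in $G$ (because $NZ'(G) = G$ and $Z'(G)$ is contained in the compact open subgroup $\U^{ma+}(\FF_q)$, in fact $Z'(G) \subseteq \U^{ma+}(\FF_q)$ as it is a normal subgroup contained in every Borel; one checks $Z'(G) = Z(G).(Z'(G)\cap\U^{ma+}(\FF_q))$ as in the proof of Lemma~\ref{lemma non degeneracy action}, and density follows), Theorem~\ref{thm CRW} of Caprace--Reid--Willis applies: the dense normal subgroup $N$ contains $\overline{\con(g)}$ for every $g \in G$. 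In particular $N \supseteq \overline{\con(a^{\pm 1})}$, hence $N \supseteq \la \overline{\con(a)} \cup \overline{\con(a\inv)}\ra \supseteq \U^{ma+}(\FF_q)$, and $N$ contains the compact open subgroup $\U^{ma+}(\FF_q)$; being also normal and having image everything in $G/Z'(G)$, $N$ must be all of $G$. Therefore $G/Z'(G)$ has no nontrivial proper closed normal subgroup, i.e.\ it is topologically simple.

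I expect the main obstacle to be the first half: carefully justifying that a closed normal subgroup acting nontrivially on $X_+$ must in fact induce the \emph{entire} effective quotient. The cleanest route is probably not to reprove a building-theoretic simplicity statement from scratch but to cite the known topological simplicity of $\G^{rr}_A(\FF_q)$ (which holds for all finite $\FF_q$ by \cite[Theorem~2.A.1]{Remytopolsimple} together with \cite[Proposition~11]{CaRe}) and to transport it along the continuous surjection $\varphi \co \G^{pma}_A(\FF_q) \to \G^{rr}_A(\FF_q)$ whose kernel is exactly $Z'(\G^{pma}_A(\FF_q))$: indeed $\varphi$ identifies $G/Z'(G)$ with $\G^{rr}_A(\FF_q)$, so the nontrivial closed normal image $\varphi(N) \trianglelefteq \G^{rr}_A(\FF_q)$ is dense, hence — since $\G^{rr}_A(\FF_q)$ is topologically simple and $\varphi(N)$ is normal — one gets $\varphi(N)$ dense, thus $\overline{\varphi(N)} = \G^{rr}_A(\FF_q)$, which is precisely the statement $NZ'(G)$ dense in $G$ that feeds into the second paragraph. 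This sidesteps any delicate independent simplicity argument and reduces everything to the already-available R\'emy--Ronan case plus the contraction-group input. The one genuinely new point requiring care is the identification $\ker\varphi = Z'(\G^{pma}_A(\FF_q))$ and the fact that $Z'$ sits inside the compact open $\U^{ma+}(\FF_q)$; both follow from the BN-pair description of $\G^{pma}$ and the results of \cite{Rousseau} recalled above.
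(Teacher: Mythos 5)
There is a genuine gap in the first half of your argument. Your preferred route rests on the claim that $\varphi$ identifies $\G_A^{pma}(\FF_q)/Z'(\G_A^{pma}(\FF_q))$ with $\G_A^{rr}(\FF_q)$, so that topological simplicity can be transported back. This fails in general, for two reasons. First, $\varphi$ is only defined on $\overline{\G(\FF_q)}$, the closure of $\G(\FF_q)$ in $\G^{pma}(\FF_q)$, and this closure can be a \emph{proper} subgroup of $\G^{pma}(\FF_q)$ when $\charact \FF_q\leq M$ (Corollary~\ref{corintro non density} produces explicit examples). Second, even passing to the action of all of $\G^{pma}(\FF_q)$ on $X_+$, the effective quotient $\G^{pma}(\FF_q)/Z'(\G^{pma}(\FF_q))$ may properly contain $\G^{rr}(\FF_q)$ inside $\Aut(X_+)$ --- this is exactly the caveat of Remark~\ref{remintro other completions}, and it is the whole reason Theorem~\ref{thmintro simple pma} is not an immediate consequence of Theorem~\ref{mainthmintro}. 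If your identification held, the proposition would be a triviality and the paper would not need to prove it. Your fallback via ``Tits' transitivity/simplicity argument'' is also not justified as stated: the Bourbaki/Tits simplicity theorem for BN-pairs requires a \emph{solvable} normal subgroup $U$ of $B$ and perfectness of $G$, and only locates normal subgroups relative to the derived group; here $\U^{ma+}(\FF_q)$ is pro-$p$ but not solvable, and none of these hypotheses is checked.

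The paper closes this gap differently: it invokes the structure theorem of Caprace--Monod (\cite[Corollary~3.1]{CaMo}) to produce a closed \emph{cocompact} normal subgroup $H\supseteq Z'$ of $G=\G_A^{pma}(\FF_q)$ with $H/Z'$ topologically simple, and then shows $H=G$ by the contraction argument: since $G/H$ is compact and totally disconnected its contraction groups are trivial, so $\overline{\con(a^{\pm 1})}\subseteq H$ for $a=\overline{w}$, whence $\U^{ma+}(\FF_q)\subseteq H$ by Theorem~\ref{thm contracting}, and then normality under $\N(\FF_q)$ forces $H$ to contain all real root groups, so $H=G$. Your second half (density plus Theorem~\ref{thm CRW} plus Theorem~\ref{thm contracting}) is essentially this same mechanism and is sound; the problem is solely that you have no valid source for the initial simplicity or density statement. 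A minor further slip: $Z'(G)$ is not contained in $\U^{ma+}(\FF_q)$ in general (it contains central torus elements; the correct statement, used in Lemma~\ref{lemma non degeneracy action}, is $Z'(G)=Z(G).(Z'(G)\cap\U^{ma+}(\FF_q))$), though this does not affect the logic once $N\supseteq Z'(G)$ is assumed.
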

\begin{proof}
Set $G:=\G_A^{pma}(\FF_q)$ and $Z':=Z'(\G_A^{pma}(\FF_q))$. It follows from \cite[Corollary~3.1]{CaMo} that $G$ possesses a closed cocompact normal subgroup $H$ containing $Z'$ and such that $H/Z'$ is topologically simple. It thus remains to see that in fact $H=G$. Let $\pi\co G\to G/H$ denote the canonical projection. Let also $w$ be a Coxeter element of $W$, and set $a:=\overline{w}\in\N(\FF_q)\subset G$. Since $G/H$ is compact and totally disconnected, its contraction groups are trivial (see e.g. the introduction of \cite{Titscore}). In particular,
$$\pi(\con(a^{\pm 1}))\subseteq \con(\pi(a^{\pm 1}))=\{1\},$$
and hence the closures of the contraction groups $\con(a)$ and $\con(a\inv)$ are contained in $\ker\pi=H$. It follows from Theorem~\ref{thm contracting} that $H$ contains $\U^{ma+}(\FF_q)$. But $G$ normalises $H$ and contains $\N(\FF_q)$, and hence $H$ also contains all real root groups. Therefore $H=G$, as desired.
\end{proof}

We can now give a common proof for Theorems~\ref{mainthmintro} and \ref{thmintro simple pma}:
\begin{theorem}\label{mainthmnotintro}
Assume that the generalised Cartan matrix $A$ is indecomposable of indefinite type, and let $G$ be one of the complete Kac--Moody groups $\G_A^{rr}(\FF_q)$ or $\G_A^{pma}(\FF_q)$. Then $G/Z'(G)$ is abstractly simple.
\end{theorem}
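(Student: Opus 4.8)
The plan is to combine the topological simplicity of $G/Z'(G)$ with the contraction results of Theorem~\ref{thm contracting} via the Caprace--Reid--Willis result (Theorem~\ref{thm CRW}). First I would fix a Coxeter element $w$ of $W=W(A)$ and set $a:=\overline{w}\in\N(\FF_q)\subset G$, working inside $\G^{pma}_A(\FF_q)$ where Theorem~\ref{thm contracting} applies directly. Let $K$ be a nontrivial normal subgroup of $G/Z'(G)$, and let $N$ be its preimage in $G$, a normal subgroup of $G$ not contained in $Z'(G)$. Since $G/Z'(G)$ is topologically simple --- by \cite[Theorem~2.A.1]{Remytopolsimple} together with \cite[Proposition~11]{CaRe} in the $\G^{rr}$ case, and by Proposition~\ref{prop pma topolsimple} in the $\G^{pma}$ case --- the closure $\overline{N}$ equals all of $G$, i.e. $N$ is a dense normal subgroup of $G$. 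As $G$ is a totally disconnected locally compact group (a compact open subgroup $U^+$ was exhibited at the start of the section), Theorem~\ref{thm CRW} applies and yields $\overline{\con(a)}\subseteq N$ and $\overline{\con(a\inv)}\subseteq N$.

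Next I would feed these inclusions into Theorem~\ref{thm contracting}~(3), which gives $\U^{ma+}(\FF_q)\subseteq\langle\overline{\con(a)}\cup\overline{\con(a\inv)}\rangle\subseteq N$ in the case $G=\G^{pma}_A(\FF_q)$. For $G=\G^{rr}_A(\FF_q)$, one instead applies Lemma~\ref{lemma Urr contracting} (whose hypothesis $\varphi(\overline{\U^+(\FF_q)})=\U^{rr+}(\FF_q)$ holds since $\FF_q$ is finite, by Lemma~\ref{lemma phi}), obtaining $\U^{rr+}(\FF_q)\subseteq N$. In both cases $U^+\subseteq N$. Since $N$ is normal in $G$ and $\N(\FF_q)\subseteq G$ normalises nothing but rather conjugates $U^+$ around, and in particular $\overline{s_i}\thinspace U^+\overline{s_i}\thinspace\inv$ contains the negative simple root group $\U_{-\alpha_i}(\FF_q)$, we get that $N$ contains all real root groups $\U_{\alpha}(\FF_q)$ for $\alpha\in\Delta^{\re}$. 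These generate a group whose image in $G/Z'(G)$ is the effective quotient of $\G_A(\FF_q)$, which acts on the positive building $X_+$ without a global fixed point; combined with density of $N$, this forces $N$ to be all of $G$ (one can also simply observe that the subgroup generated by $U^+$ and $\N(\FF_q)$ is all of $G$, as $(\B^+,\N)$ is a BN-pair). Hence $K=G/Z'(G)$, proving abstract simplicity.

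The main obstacle is really the input rather than the deduction: the deduction above is short once one has (i) topological simplicity of $G/Z'(G)$ in full generality --- which for $\G^{pma}$ is exactly the content of Proposition~\ref{prop pma topolsimple}, relying on the Caprace--Monod decomposition \cite[Corollary~3.1]{CaMo} and again on Theorem~\ref{thm contracting} --- and (ii) the fact that both real \emph{and imaginary} root groups are contracted, which is Theorem~\ref{thmBintro}~(3)/Theorem~\ref{thm contracting} and whose proof goes through the delicate analysis of $f^w_\alpha$ in Section~\ref{section W and Delta} (monotonicity of signs along powers of a Coxeter element, and the absence of fixed roots in indefinite type). One subtlety worth spelling out is the passage from $\overline{\con^{\G^{pma}}(a)}$-statements to $\G^{rr}$: this is handled by the continuous surjection $\varphi$ and the containment $\varphi(\con(a))\subseteq\con(\varphi(a))$, exactly as in Lemma~\ref{lemma Urr contracting}. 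A second point to be careful about is that in the $\G^{rr}$ case one only knows a priori that $U^{rr+}_{im}=\varphi(\U^{ma}_{\Delta_+^{\im}}(\FF_q)\cap\overline{\U^+(\FF_q)})\subseteq N$, but this is already enough: together with the real root groups it generates the dense subgroup $\G_A(\FF_q)\cdot U^{rr+}$, so density plus openness of $U^+$ finishes the argument.

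\begin{proof}
Set $Z':=Z'(G)$. We have already observed that $G$ is totally disconnected locally compact with compact open subgroup $U^+$, and that $G/Z'$ is topologically simple: for $G=\G^{rr}_A(\FF_q)$ this is \cite[Theorem~2.A.1]{Remytopolsimple} together with \cite[Proposition~11]{CaRe}, and for $G=\G^{pma}_A(\FF_q)$ this is Proposition~\ref{prop pma topolsimple}.

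Let $K$ be a nontrivial normal subgroup of $G/Z'$, and let $N$ be its preimage in $G$. Then $N$ is a normal subgroup of $G$ that is not contained in $Z'$. Since $G/Z'$ is topologically simple, the closure of $K$ is all of $G/Z'$; equivalently, $N$ is a dense normal subgroup of $G$.

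Let $w=s_1\dots s_n$ be a Coxeter element of $W$ and set $a:=\overline{w}\in\N(\FF_q)\subseteq G$. By Theorem~\ref{thm CRW} applied to the dense normal subgroup $N$ of $G$, we have $\overline{\con(a)}\subseteq N$ and $\overline{\con(a\inv)}\subseteq N$, where the contraction groups are computed in $G$.

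Suppose first $G=\G^{pma}_A(\FF_q)$. Then Theorem~\ref{thm contracting}~(3) gives
$$\U^{ma+}(\FF_q)\subseteq\la\overline{\con(a)}\cup\overline{\con(a\inv)}\ra\subseteq N,$$
so $U^+=\U^{ma+}(\FF_q)\subseteq N$. Suppose next $G=\G^{rr}_A(\FF_q)$. Since $\FF_q$ is finite, $\varphi(\overline{\U^+(\FF_q)})=\U^{rr+}(\FF_q)$ by Lemma~\ref{lemma phi}, so Lemma~\ref{lemma Urr contracting} applies and yields $U^+=\U^{rr+}(\FF_q)\subseteq\la\overline{\con(a)}\cup\overline{\con(a\inv)}\ra\subseteq N$. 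In either case $U^+\subseteq N$.

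Now $N$ is normal in $G$ and $\N(\FF_q)\subseteq G$, so $N$ contains $\overline{s_i}\thinspace U^+\overline{s_i}\thinspace\inv$ for all $i$; as $U^+$ contains the real root group $\U_{\alpha_i}(\FF_q)$, it follows that $N$ contains $\U_{-\alpha_i}(\FF_q)$ as well. Hence $N$ contains $\U_{\alpha}(\FF_q)$ for all $\alpha\in\Delta^{\re}$, and therefore $N$ contains $\N(\FF_q)$ (as $\overline{s_i}=x_{\alpha_i}(1)x_{-\alpha_i}(1)x_{\alpha_i}(1)$ and these generate $\N(\FF_q)$ together with $\T(\FF_q)\subseteq\la\U_{\alpha_i}(\FF_q),\U_{-\alpha_i}(\FF_q)\ra$). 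Thus $N$ contains the subgroup generated by $U^+$ and $\N(\FF_q)$. Since $(U^+\rtimes\T(\FF_q),\N(\FF_q))$ is a BN-pair for $G$ (see \cite[Proposition~1]{CaRe} and \cite[3.16]{Rousseau}), this subgroup is all of $G$. Hence $N=G$, so $K=G/Z'$, and $G/Z'$ is abstractly simple.
\end{proof}
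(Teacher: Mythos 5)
Your proof is correct and follows essentially the same route as the paper: topological simplicity of $G/Z'(G)$ forces (the preimage of) a nontrivial normal subgroup to be dense, Theorem~\ref{thm CRW} then puts $\overline{\con(a^{\pm1})}$ inside it, and Theorem~\ref{thm contracting}~(3) resp.\ Lemma~\ref{lemma Urr contracting} yields $U^+\subseteq N$. The only (cosmetic) difference is the last step: the paper simply observes that $N$ contains the open subgroup $U^+$, hence is open, hence closed, hence equals $G$ by density, whereas you run the slightly longer conjugation/BN-pair argument --- which also works, and which you yourself note can be replaced by the open-plus-dense observation.
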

\begin{proof}
Set $U^+:=\U^{rr+}(\FF_q)$ or $U^+:=\U^{ma+}(\FF_q)$ so that $U^+\leq G$.
Let $K$ be a nontrivial normal subgroup of $G/Z'(G)$. Since $G/Z'(G)$ is topologically simple (see \cite[Proposition 11]{CaRe} for $\G_A^{rr}(\FF_q)$ and Proposition~\ref{prop pma topolsimple} for $\G_A^{pma}(\FF_q)$), $K$ must be dense in $G$. Since $G$ is locally compact and totally disconnected, it then follows from Theorem~\ref{thm contracting} and Lemma~\ref{lemma Urr contracting}, together with Theorem~\ref{thm CRW}, that $K$ contains $U^+$. Since $U^+$ is open, $K$ is open as well, and hence closed in $G$. Therefore $K=G$, as desired.
\end{proof}

\begin{remark}\label{rem blabla}
We remark that, although we made the assumption that the Kac--Moody group $\G(k)$ be of simply connected type (to get simplified statements), this of course does not have any impact on the simplicity results, and one might as well consider an arbitrary Kac--Moody root datum $\mathcal D$ and the Kac--Moody group $\G_{\mathcal D}(k)$. The essential difference is that, in general, $\G_{\mathcal D}(k)$ is not generated by its root subgroups anymore, and one then has to consider a subquotient of $\G^{pma}(k)$ (or else $\G^{rr}(k)$). More precisely, let $G$ be either $\G^{pma}(k)$ or $\G^{rr}(k)$, let $U^+$ be the corresponding subgroup $\U^{ma+}(k)$ or $\U^{rr+}(k)$, and let $G_{(1)}$ be the subgroup of $G$ generated by $U^+$ and by all root groups of $\G(k)$. Then $G_{(1)}$ is normal in $G$ (and $G=\T(k).G_{(1)}$), and what we proved is the abstract simplicity of $G_{(1)}/(Z'(G)\cap G_{(1)})$. 
\end{remark}

\bibliographystyle{amsalpha} 
\bibliography{simpleKM}

\end{document}